\newmdtheoremenv[backgroundcolor=cyan]{theorem-prove}{Theorem}[theorem]
\newmdtheoremenv[backgroundcolor=cyan]{lemma-prove}{Lemma}[theorem]
\newmdtheoremenv[backgroundcolor=cyan]{proposition-prove}{Proposition}[theorem]
\newmdtheoremenv[backgroundcolor=yellow!40]{theorem-check}{Theorem}[theorem]
\newmdtheoremenv[backgroundcolor=yellow!40]{lemma-check}{Lemma}[theorem]
\newmdtheoremenv[backgroundcolor=yellow!40]{proposition-check}{Proposition}[theorem]
\newcommand{\optp}{\operatorname{tp}}
\newcommand{\norm}[1]{\left\lVert#1\right\rVert}
\newcommand{\tA}{{\bar {\mathcal{A}}}}
\newcommand{\tB}{{\bar{\mathcal{B}}}}
\newcommand{\bbQ}{\mathbb{Q}}
\newcommand*\interior[1]{#1^{\mathsf{o}}}
\newcommand{\bone}{\mathbf{1}}
\newcommand{\bzero}{\mathbf{0}}
\title{Definability and Scott rank in Separable Metric Structures}
\author{Diego Bejarano}
\date{}
\begin{document}

\begin{abstract}
We give a notion of Scott rank for separable metric structures based on the definability of the (metric closures of) automorphism orbits in continuous infinitary logic. This is a continuous analogue of work of Montalb\'an for countable structures. In the process, we prove some results concerning definability, type omitting, and back-and-forth for metric structures.
\end{abstract}

\maketitle

\tableofcontents

\section{Introduction}
In \cite{Scott}, Scott shows that the infinitary logic $\mathcal{L}_{\omega_1,\omega}$ is strong enough to classify countable structures in a particular language up to isomorphism. In particular, for every countable structure $\mathcal{A}$, there is an $\mathcal{L}_{\omega_1,\omega}$ sentence $\phi$ such that for any other countable structure $\mathcal{B}$ in the same language, $\mathcal{A}\cong\mathcal{B}$ if, and only if, $\mathcal{B}\models\phi$. In relation to this result, many notions of Scott rank have been proposed, which aim to give a notion of complexity to countable structures. Of particular note are the notions introduced by Montalb\'an in  \cite{montalban_robuster_2015}, but best reference in Montalb\'an's book \cite{montalban2021}: the Scott rank of a countable structure $\mathcal{A}$ is the least ordinal $\alpha$ such that the automorphism orbit of every finite tuple of elements of $\mathcal{A}$ is $\Sigma_\alpha$ in $\mathcal{L}_{\omega_1,\omega}$ without parameters. Equivalently, it is the least ordinal $\alpha$ such that $\mathcal{A}$ has a $\Pi_{\alpha+1}$ Scott sentence. We refer the reader to \cite[Section 3.1]{montalban_robuster_2015} for a brief summary of the history of Scott rank.\par

Scott's original result was generalized to separable metric structures in \cite{benyaacov2017} using a continuous analogue of Scott's back-and-forth relations, which focuses on how complex are the formulas that define automorphism orbits inside a countable structure. The purpose of this work is to generalize Montalb\'an's notion of Scott rank to separable metric structures. To do this, we investigate the definable sets (without parameters) in separable metric structures to obtain the following results:

\newtheorem*{thmx}{Theorem}

\begin{thmx}
Let $\tA$ be a separable metric structure, $n$ a positive integer, and $\bar a\in\tA^{n}$:
\begin{enumerate}
\item The (metric closure of) the automorphism orbit of $\bar a$ in $\tA$ is definable (without parameters) by an infinitary continuous formula (\Cref{def-orbits}).
\item Therefore, a nonempty subset of $A^n$ is definable (without parameters)  by an infinitary continuous formula if, and only if, it is metric-closed and automorphism-invariant (\Cref{def-sets}).
\item Moreover, under some mild assumptions, the Scott sentence of $\tA$ can be constructed in a uniform way from these definitions (\Cref{SP-exists}). 
\end{enumerate}
\end{thmx}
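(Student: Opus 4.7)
The plan is to carry out a continuous analogue of Scott's back-and-forth argument, replacing ``equivalence at stage $\alpha$'' with ``back-and-forth pseudo-metric at stage $\alpha$.'' I would define a decreasing chain of automorphism-invariant $[0,1]$-valued pseudo-metrics $\rho_\alpha$ on $\bigcup_n \tA^n$, indexed by countable ordinals. At $\alpha=0$, set $\rho_0(\bar{a},\bar{b})$ to be the supremum, over atomic formulas $\phi$, of $|\phi^\tA(\bar{a}) - \phi^\tA(\bar{b})|$ (truncated to $[0,1]$). At successor stages,
\[
\rho_{\alpha+1}(\bar{a},\bar{b}) = \max\!\Bigl(\sup_{c\in A}\inf_{d\in A}\rho_\alpha(\bar{a}c,\bar{b}d),\; \sup_{d\in A}\inf_{c\in A}\rho_\alpha(\bar{a}c,\bar{b}d)\Bigr),
\]
and at a limit $\lambda$, $\rho_\lambda = \sup_{\alpha<\lambda}\rho_\alpha$. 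Since $\tA$ is separable, this chain stabilizes at some countable ordinal $\alpha^*$; call the limit $\rho_\infty$.

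For (1), the key claim is that $\rho_\infty(\bar{a},\bar{b}) = 0$ if and only if $\bar{b}$ lies in the metric closure of the automorphism orbit of $\bar{a}$. The ``if'' direction is routine from invariance and continuity of each $\rho_\alpha$. The ``only if'' direction is a continuous back-and-forth argument: starting from $\rho_\infty(\bar{a},\bar{b})=0$ and stability at $\alpha^*$, for each $\varepsilon>0$ and each enumeration of a countable dense subset of $\tA$, I would build alternately extending approximate partial isometries with defect controlled by $\varepsilon/2^n$; completeness of $\tA$ and compatibility of uniform continuity moduli assemble these into an automorphism moving $\bar{a}$ within $\varepsilon$ of $\bar{b}$, showing $\bar{b}\in\overline{\operatorname{Aut}(\tA)\cdot\bar{a}}$. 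Next I would show by induction that each $\rho_\alpha(\bar{a},\cdot)$ is the interpretation of an infinitary continuous formula $\phi_\alpha^{\bar{a}}(\bar{x})$: atomic pseudo-metrics are expressible as countable suprema over atomic formulas (separability makes the sup countable without loss), successor stages use the continuous quantifiers $\sup_y,\inf_y$, and limits use countable suprema. The stabilized formula $\phi_{\alpha^*}^{\bar{a}}$ then defines the distance to $\overline{\operatorname{Aut}(\tA)\cdot\bar{a}}$, which is exactly what (1) asserts.

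Part (2) follows quickly. The ``only if'' direction is standard, since formulas are uniformly continuous and automorphism-invariant. For ``if,'' given a nonempty, closed, automorphism-invariant $X\subseteq A^n$, pick a countable dense $\{\bar{a}_i\}\subseteq X$ by separability and set $\psi_X(\bar{x}) = \inf_i \phi_{\alpha^*}^{\bar{a}_i}(\bar{x})$; this is a valid infinitary continuous formula whose value at $\bar{x}$ equals $d(\bar{x},X)$, since each $\phi_{\alpha^*}^{\bar{a}_i}$ captures the distance to the orbit closure of $\bar{a}_i$, which sits inside $X$. For (3), I would package the orbit-distance formulas of (1) into a Scott sentence asserting that (a) every tuple of the ambient structure has distance zero to some orbit realized in $\tA$, and (b) every realization of an orbit of $\tA$ extends, one coordinate at a time, to realizations of all one-step extensions of that orbit in $\tA$. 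The ``mild assumptions'' are presumably those needed to ensure that only countably many orbit-distance formulas must be combined, keeping the resulting expression a well-formed infinitary continuous sentence.

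The hardest step will be the continuous back-and-forth in (1). One must convert the purely semantic equality $\rho_\infty(\bar{a},\bar{b})=0$, together with stability at the countable ordinal $\alpha^*$, into an honest automorphism witness via careful diagonalization over a countable dense subset and sensitive bookkeeping of uniform continuity moduli. The standard ingredients—separability, completeness of $\tA$, and compatibility of moduli in the language—should suffice, but only when assembled with care, since the back-and-forth is only approximate at every finite stage and the final automorphism emerges as a limit rather than as a single explicit map.
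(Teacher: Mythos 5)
Your approach is genuinely different from the paper's. You build the classical Scott tower of back-and-forth pseudo-metrics $\rho_\alpha$ by transfinite recursion and argue that the stabilized rank $\rho_{\alpha^*}(\bar a,\cdot)$ is an infinitary continuous formula whose zero-set is the orbit closure. The paper instead works with a single back-and-forth set $I(\Omega,t)=\{(\bar a,\bar b)\mid \sup_\phi|\phi^\tA(\bar a)-\phi^\tB(\bar b)|<t\}$ built directly from formulas respecting a fixed universal weak modulus $\Omega$, shows it corresponds to isomorphisms (\Cref{baf-auto}), and then produces the orbit-defining formula as a countable supremum of ``discrepancy-maximizing'' $\Omega$-formulas. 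Your recursion is closer in spirit to the discrete Scott-rank construction and could conceivably yield a sharper quantifier count, whereas the paper's one-shot back-and-forth set is what lets it build the Scott predicate directly in \Cref{SP-exists}.

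However, there is a real gap in your proposal: you never address the modulus-of-continuity constraints that govern which countable suprema and infima are legitimate formulas. In $L^\mathbb{R}_{\omega_1,\omega}$ as defined here (item (7) of \Cref{the-language}), one may form $\sup_n\phi_n$ only when all the $\phi_n$ respect a \emph{common} modulus $\Delta$ and common bound $I$. Your $\rho_0(\bar a,\bar b)=\sup_{\phi\text{ atomic}}|\phi^\tA(\bar a)-\phi^\tA(\bar b)|$ is a supremum over atomic formulas of many different arities and moduli, and your successor step involves countable sups of the formulas $\rho_\alpha^{\bar a c}(\bar x y)$ as $c$ runs over a dense set; in both cases there is no a priori common modulus, so these expressions are not formulas. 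The paper's entire Section 2.1 (universal weak moduli, \Cref{Udef}, \Cref{Upower}) exists precisely to repair this: one fixes a universal $\Omega$ once and for all, pads with dummy variables, and works only with $\Omega$-formulas, so that all the countable $\sup$/$\inf$ operations stay inside the language. Your argument would need this machinery, or an equivalent fix, before the formulas $\phi_\alpha^{\bar a}$ exist at all.

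A secondary concern is definability versus being a zero-set. Showing that the zero-set of $\phi_{\alpha^*}^{\bar a}$ is $\overline{\operatorname{Aut}_\tA(\bar a)}$ is not enough: in continuous logic you must additionally verify the $\varepsilon$-$\delta$ criterion of \Cref{define} (if $\phi_{\alpha^*}^{\bar a}(\bar x)<\delta$ then $d_\Omega(\bar x,\overline{\operatorname{Aut}_\tA(\bar a)})\le\varepsilon$). You gesture at an approximate back-and-forth that builds an automorphism ``moving $\bar a$ within $\varepsilon$ of $\bar b$,'' which is the right idea, but the quantitative link between the value of $\phi_{\alpha^*}^{\bar a}$ and the distance to the orbit closure is exactly what \Cref{baf-auto} supplies ($d_\Omega^\tB(\Phi(\bar a),\bar b)\le t$ from a back-and-forth set with bound $t$), and it needs to be made explicit rather than appealed to implicitly. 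Parts (2) and (3) of your proposal are otherwise in line with \Cref{def-sets} and \Cref{SP-exists}, modulo the same modulus issues.
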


The main tool used in these results is a metric version of the back-and-forth process, which we develop in Section 4 to characterize isomorphisms of separable metric structures. Using a notion of quantifier complexity defined in Section 2, we define a notion of Scott rank in Section 6 and show that this version of Scott rank satisfies a notion of Robustness similar to the discrete case. However, due to some subtleties with our type-omitting theorem in Section 3, we can only show that the existence of a Scott sentence bounds the Scott rank at limit ordinals (the details are found in Section 6).\par

This paper is organizes as follows: Section 2 introduces the main definition of continuous infinitary logic, and we show basic results about definability and types. Section 3 proves uses a continuous analogue of Makkai's consistency properties to show a Model Existence Theory and a weaker analogue of Montalb\'an's type omitting theorem. Section 4 defines back-and-forth sets and shows that they corresponds to isomorphisms. Section 5 combines the work of the previous sections to show (the metric closures of) automorphism orbits are definable and builds a Scott predicate using these definitions. In Section 6 we put everything together to show some properties of the Scott rank.\par

\textbf{Acknowledgements:} This paper is part of the author's Ph.D. thesis under Tom Scanlon. The author would like to thank Tom Scanlon, Antonio Montalb\'an, Dino Rossegger, Aaron Anderson, Ita{\"i} Ben Yaacov, and C. Ward Henson for many insightful conversations and comments in the making of this paper.

\section{Continuous Infinitary Logic}

In this section we introduce the basic notions of continuous infinitary logic, first defined by Ben Yaacov and Iovino in \cite{BenIovino}. Further work on the topic can be found in \cite{eagle}, \cite{benyaacov2017} and \cite{hallback}. This logic is `continuous'  in the sense that our formulas will be interpreted as uniformly continuous functions into the real numbers, and `infinitary' in that we will add the connectives $\sup_n$ and $\inf_n$, which are interpreted as taking the supremum (resp. infimum)  of a countable set of formulas when certain conditions are met. Our definitions of the logic are essentially those of \cite{hallback}.\par

Throughout most of this paper, $\sup_x$ acts as the continuous analogue of the universal quantifier, and $\sup_n$ acts as the analogue of conjunction. Similarly, $\inf_x$ plays the role of the existential quantifier, and $\inf_n$ the role of countable disjunction. This is because we will be interested primarily in conditions of the form $``\phi<r,"$ where $\sup$ and $\inf$ do take these prescribed roles. However, this convention no longer holds if we look at expressions of the form $``\phi>r"$. Therefore, to avoid confusion, we will not use $\land$, $\lor$, $\bigwedge$ or $\bigvee$ going forward. We begin with the notion of a vocabulary (sometimes also called a signature):

\begin{definition}
By a continuous vocabulary $\tau$, we mean a family of formal symbols. For each symbol $S\in\tau$ we specify:
\begin{enumerate}
\item its kind: either predicate or function. This determines what kind of object its interpretation should be;
\item its arity: a number $n_S$, which determines how many arguments $S$ takes;
\item a function $\Delta_S$, which is a modulus of continuity of arity $n_S$, and we require that any interpretation of $S$ respects this modulus. We will elaborate on this point below;
\item If $S$ is a predicate symbol, we also specify a compact subset of $I_S$ of $\mathbb{R}$. We require that that any interpretation of $S$ has its image contained in this subset.
\end{enumerate}
In this setup, a  constant will be a zero-ary function. We assume that $\tau$ always includes a binary predicate symbol $d$, with $\Delta_d(r_1,r_2)=r_1+r_2$.
\end{definition}

The modulus of continuity is meant as a guarantee from the language that the interpretation of the symbol $S$ will be continuous (and how continuous). For predicates, $\Delta_S$ and $I_S$ together guarantee that the interpretation of $S$ will be a uniformly continuous function. We now give the formal definition:

\begin{definition}
\begin{enumerate}
\item A modulus of continuity of arity $n$ is a continuous function $\Delta:[0,\infty)^n\to[0,\infty)$ such that $\Delta(0)=0$ and
$$\Delta(\overline r)\leq \Delta(\overline r+\overline s)\leq \Delta(\overline r)+\Delta(\overline s),$$
for all $\overline r, \overline s\in[0,\infty)^n$. We say that $\Delta$ is faithful if $\Delta(\overline r)=0$ implies $\overline r = \overline 0$. 
\item Given a metric space $(X,d_X)$, and $\Delta$ a modulus of continuity of arity $n$. We define a pseudo-distance $d_\Delta$ on $X^n$ by letting $d_\Delta(\overline x,\overline y) = \Delta(d_X(x_i,y_i)\mid i<n)$. If $\Delta$ is faithful, $d_\Delta$ is a distance function on $X^n$.
\item We say that a function $S:(X,d_X)^n \to (Z,d_Z)$ between metric spaces respects $\Delta$ if for every $\overline x,\overline y\in X^n$ we have that  $d_Z(S(\overline x),S(\overline y))\leq d_\Delta(\overline x,\overline y)$.
\end{enumerate}
\end{definition}

We can now define the notion of a metric $\tau$-structure:

\begin{definition}
A metric $\tau$-structure $\tA$ consists of a set $A$, known as the domain of the structure, alongside interpretations of all the symbols in $\tau$ such that:
\begin{enumerate}
\item each predicate symbol $P\in \tau$ interprets a bounded, uniformly continuous function $P:A^{n_P}\to \mathbb{R}$ that respects $\Delta_P$ and whose image is contained in $I_P$;
\item $d$ interprets a distance on $A$ such that $(A,d)$ is a complete, bounded metric space;
\item each function symbol $f\in \tau$ interprets a continuous function $f:A^{n_f}\to A$ that respects the modulus $\Delta_F$.
\end{enumerate}
\end{definition}

\begin{example}
\begin{enumerate}[label=\emph{\alph*})]
\item If $B$ is the unit ball of a Banach space $X$ over $\mathbb{R}$ or $\mathbb{C}$. We can name the point $0_X\in B$, and the norm $||\cdot||:B\to [0,1]$ as a predicate. We can also add functions $f_{\alpha,\beta}(x,y) = \alpha x+\beta y$ for each pair of scalars with $|\alpha|+|\beta|\leq 1$. Alternatively, there is way of treating all of $X$ as a metric structure by using countably many sorts to name the ball of radius $n$ centered at  $0_X\in B$ for every $n\in\mathbb{N}$. To capture the structure of a $C^*$ algebra, we can include multiplication and the $^*$-map as functions. 
\item Given a probability space $(X,\mathcal{B},\mu)$, we can construct a metric structure as follows: let $A$ be the result of quietening $\mathcal{B}$ by all the sets of measure zero, and we take the distance $d$ to be the measure of the symmetric difference between two sets. We can also add functions for the Boolean operations $\land$ (meet), $\lor$ (join), and complement. As distinguished elements we can name $0$ and $1$ in $A$. 
\item Metric structures also extend all classical first-order structures. Given a first-order structure $M$, we can turn it into a metric structure by considering the discrete metric ($d(a,b)=1$ if $a\neq b$ and zero otherwise). Any function is uniformly continuous with respect to this metric, and we capture the relations on $M$ by thinking about them as maps $R:M^n\to [0,1]$. 
\end{enumerate}
\end{example}

The notions of substructure, extension, reduct, etc., are defined in the natural way. We now define the language of continuous infinitary logic:

\begin{definition}\label{the-language}
Suppose that $\tau$ is a continuous vocabulary, we define the language $L^\mathbb{R}_{\omega_1,\omega}(\tau)$ recursively as follows. We start by fixing a set $\{x_i\mid i \in\omega\}$ of distinct variable symbols, then:
\begin{enumerate}
\item each $x_i$ is a $\tau$-term that respects the $\mathbb{N}$-modulus $\Delta_{x_i}(r_0,r_1,\dotsc)=r_i$ (we could instead ask that it respects the $n$-ary modulus $\Delta_{x_i,n}(r_0,r_1,\dotsc,r_n)=r_i$  for every $n\geq i$,);
\item if $t_0,\dotsc, t_{n-1}$ are terms and $f\in\tau$ is a function symbol with $n_f=n$, then $f(t_0,\dotsc, t_{n-1})$ is a $\tau$-term that respects the modulus $\Delta_f\circ (\Delta_{t_i}\mid i<n)$;

\item a $\tau$-atomic formula is an expression of the form $P(t_1,\dotsc, t_{n_P})$ where $P\in \tau$ is a predicate symbol and $t_1,\dotsc, t_{n_P}$ are $\tau$-terms. A $\tau$-atomic formula respects the bound $I_p$ and the modulus $\Delta_P\circ (\Delta_{t_i}\mid i<n)$;

\item the zero-ary functions $\bzero$ and $\bone$ are $\tau$-formulas with bounds $I_\bzero=\{0\}$ and $I_\bone=\{1\}$;

\item If $\phi$ and $\psi$ are $\tau$-formulas, then so are $\phi+\psi$,  $\max(\phi,\psi)$, $\min(\phi,\psi)$, and $q\phi$ for all $q\in\mathbb{Q}$. The corresponding bounds and moduli are easily computable from those of $\phi$ and $\psi$. These are the only connectives we allow;

\item If $\phi(x_0,\dotsc, x_{n-1})$ is a $\tau$-formula, then $\sup_{x_i} \phi(x_0,\dotsc, x_{n-1})$ and $\inf_{x_i} \phi(x_0,\dotsc, x_{n-1})$  are also $\tau$-formulas for $ i<n$. These formulas respect $I_\phi$ and the $(n-1)$-ary modulus
$$\hat\Delta_\phi(r_0,\dotsc,r_{n-2})=\Delta_\phi(r_0,\dotsc,r_{i-1},0,r_{i+1},\dotsc,r_{n-2});$$

\item If $\{\phi_n\mid n\in\omega\}$ is a countable set of $\tau$-formulas all of whose free variables are contained in a finite set of variables $\overline x$, and there is a modulus $\Delta$ and a compact set $I\subset \mathbb{R}$ such that every $\phi_i$ respects both $\Delta$ and $I$. Then $\sup_n \phi_n$ and $\inf_n\phi_n$ are $\tau$-formulas with free variables $\overline x$ respecting $\Delta$ and $I$;
\end{enumerate}
Then $L^\mathbb{R}_{\omega_1,\omega}(\tau)$ is the set of all $\tau$-formulas. A $\tau$ formula is quantifier free if it is built using only steps (1)-(5). Similarly, a formula is finitary or $L^\mathbb{R}_{\omega,\omega}(\tau)$  if it is built using only steps (1)-(6). We can also give the natural definitions of free variable and sentence. There is also a natural notion of equality for formulas: 
\end{definition}

\begin{definition}
We say that two $\tau$-formulas $\phi(\bar x)$ and $\psi(\bar x)$ are logically equivalent, and write $\phi(\bar x)=\psi(\bar x)$, if $\phi^\tA(\bar a)=\psi^\tA(\bar a)$ for every $\tau$-structure $\tA$ and every $\bar a\in \tA^{|\bar x|}$.
\end{definition}

\begin{remark}
\begin{enumerate}
\item In $L^\mathbb{R}_{\omega_1,\omega}(\tau)$ , $\mathbb{R}$ refers to the fact that we are in a continuous setting, $\omega$ is a bound on the number of free variables in any formula (as in every formula has less than $\omega$ many free variables), and $\omega_1$ is a strict bound on the number of formulas we can $\sup$ or $\inf$ at once.
\item Note the additional requirements  in point (7) of \Cref{the-language}. In particular, we can only take a countable supremum or infimum of a set of formulas if all the formulas satisfy the same modulus of continuity. We will deal with this requirement in the next subsection. 
\item The lattice version of the Stone-Weierstrass Theorem shows that our connectives are dense in the set of all continuous connectives as defined in \cite{benyaacov2017}. Moreover, as we have access to countable supremum and infimum, we also have access to $\limsup$ and $\liminf$, so any  continuous connective is expressible in our language using two infinitary qualifiers. 
\item We will often use the connectives
 $$\phi\ \dot-\ \psi = \max(\phi-\psi, 0)$$ 
 $$|\phi|=\max(\phi,0)+(-1)\min(\phi,0);$$
 \item We should also mention a peculiar fact about continuous infinitary logic mentioned in Remark 2.4 of \cite{hallback}: continuous infinitary logic is not closed under substituting constants for variables.
\end{enumerate}
\end{remark}

Another feature of this language is that every formula is equivalent to one in prenex normal form, i.e. a formula obtained from a quantifier-free formula by only using quantifiers:

\begin{lemma}\label{prenex-rules}
Every $\tau$-formula is equivalent to a $\tau$-formula in prenex normal form.
\end{lemma}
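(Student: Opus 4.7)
The plan is to proceed by structural induction on the construction of $\tau$-formulas, verifying at each step that quantifiers can be commuted past the connective just introduced, possibly after renaming bound variables. The base cases (atomic formulas, $\bzero$, $\bone$) already contain no quantifiers, and the inductive cases for $\sup_{x_i}$ and $\inf_{x_i}$ are trivial, since prefixing a prenex formula with one more quantifier leaves it prenex.

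For the finitary connectives I would establish the standard commutation rules: $(\sup_x \phi) + \psi = \sup_x (\phi + \psi)$ when $x$ is not free in $\psi$; $\max(\sup_x \phi, \psi) = \sup_x \max(\phi, \psi)$; $q \cdot \sup_x \phi = \sup_x(q\phi)$ for $q \geq 0$ and $q \cdot \sup_x \phi = \inf_x(q\phi)$ for $q < 0$; together with their duals for $\inf_x$ and $\min$. Before applying such an identity I would rename the bound variables of $\phi$ to be fresh and disjoint from the free variables of $\psi$, using the equivalence $\sup_x \phi(x,\bar y) = \sup_z \phi(z,\bar y)$ whenever $z$ does not occur in $\phi$. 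A sub-induction on the total quantifier depth of the two inputs then pulls every quantifier out of $\phi + \psi$, $\max(\phi,\psi)$, $\min(\phi,\psi)$, and $q\phi$ to yield a prenex form.

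The main obstacle is the countable connectives $\sup_n$ and $\inf_n$. For the matched cases, after renaming each $\phi_n$ so that the outermost quantifier in its prenex form uses the same variable $x$, the identities $\sup_n \sup_x \phi_n = \sup_x \sup_n \phi_n$ and $\inf_n \inf_x \phi_n = \inf_x \inf_n \phi_n$ follow directly from the commutativity of suprema and infima in $\mathbb{R}$. For mismatched cases such as $\sup_n \inf_x \phi_n$, where $\sup_n \inf_x$ does not equal $\inf_x \sup_n$ in general, I expect the proof to align the prenex prefixes of the $\phi_n$ by padding each with dummy quantifiers of the alternative type over fresh unused variables (exploiting $\sup_y \psi = \psi$ when $y$ is not free in $\psi$), until all $\phi_n$ begin with a common sequence of like quantifiers that can be pulled past the countable connective one layer at a time. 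The main technical content to check is that this simultaneous alignment and renaming procedure produces a legitimate formula of $\inflo$ whose subformulas at each $\sup_n$ or $\inf_n$ step share a common modulus of continuity, a common bound, and a common finite set of free variables, as required by clause (7) of \Cref{the-language}.
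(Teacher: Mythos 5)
Your treatment of the finitary connectives is essentially the paper's proof: the commutation identities you list for $+$, $\max$, $\min$, and rational scaling with $\sup_y$ (and dually $\inf_y$) are exactly the rules the paper writes down, and the conclusion is the same induction on complexity. Your explicit mention of renaming bound variables so that the pulled-out quantifier does not capture a free variable of the other operand is a detail the paper suppresses but that is indeed needed, so that part is a welcome addition.

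Where you go astray is in treating the countable connectives $\sup_n$ and $\inf_n$ as an obstacle that requires commuting them past $\sup_x$ and $\inf_x$. By \Cref{the-language}, quantifier-free means built from clauses (1)--(5) only, so $\sup_n$ and $\inf_n$ are themselves quantifiers for the purposes of ``prenex normal form.'' The prenex prefix in this paper is not a single linear string but a well-founded tree, branching countably at each $\sup_n$ or $\inf_n$ node: this is visible in the definition of quantifier complexity, where the block $\inf_n\inf_{\bar y_n}\psi_n$ allows both the length of $\bar y_n$ and the matrix $\psi_n$ to vary with $n$. Consequently $\sup_n\inf_x\phi_n$ with each $\phi_n$ quantifier-free is already prenex and needs no commutation, and more generally if each $\phi_n$ is in prenex form then so is $\sup_n\phi_n$, with nothing further to check beyond the modulus/bound/free-variable conditions of clause (7), which transfer from the original $\phi_n$ by logical equivalence. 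Indeed, your alignment-and-padding strategy could not succeed under a literally linear reading of prenex form: the $\phi_n$ can have prenex prefixes of unbounded length (say $\phi_n = \inf_{y_1}\cdots\inf_{y_n}\chi_n$), and the logic has no infinite quantifier strings to pad them into. Padding by dummy quantifiers is genuinely useful later, when one wants to bound quantifier complexity in the $\sup^\alpha/\inf^\alpha$ hierarchy, so your instinct is not wasted, but it does not belong in the proof of this lemma.
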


\begin{proof}
Fix $\tau$-formulas $\phi(\overline x,y)$ and $\psi(\overline x)$ be $\tau$-formulas. Without loss of generality, assume that $y$ does not occur in $\psi$. Then
\begin{align*}
(\sup_y\phi) + \psi &= \sup_y (\phi+\psi)\\
\max(\sup_y \phi, \psi) &= \sup_y \max(\phi, \psi)\\
\min(\sup_y \phi, \psi) &= \sup_y \min(\phi, \psi)
\end{align*}
Multiplication by rationals is a bit more complicated. For $q\in\mathbb{Q}$:
$$q(\sup_y\phi) = \begin{cases} 
	\sup_y q\phi &\text{if $q>0$}\\
	\inf_y q\phi &\text{if $q<0$}\\
	 \bzero  &\text{if $q=0$}\\
\end{cases}$$
The situation for $\inf$ is analogous. Thus, by induction on the complexity of formulas, we can always write an equivalent formula in prenex normal form. This completes the proof. 
\end{proof}

As in \cite{marker_lectures_2016}, we use $\sim\phi$ as a shorthand for applying the continuous and infinitary versions of DeMorgan's Laws to a formula in prenex normal form. That is, moving a factor of $-1$ through all the quantifiers, turning $\sup$ into $\inf$ and vise-versa in the process. 

\begin{definition}[Truth in a Model]
An easy induction argument shows that if $\tA$ is a metric $\tau$-structure, and $\phi(x_1,\dotsc, x_n)$ is a $\tau$-formula, then $\phi$ is interpreted in $\tA$ as a uniformly continuous function $\phi^\tA:A^n\to \mathbb{R}$. If $\phi$ is a sentence (i.e. it has no free variables), we consider the value of $\phi^\tA\in \mathbb{R}$ as a measure of how true (or really false) $\phi$ is in $\tA$. We will follow the convention that $0$ (or more generally $\min I_\phi$) corresponds to true.
\end{definition}

Finally, we note the following result from \cite{benyaacov2017}:

\begin{lemma}[\cite{benyaacov2017}  Lemma 2.4]\label{finitary-density}\phantom{}\\
\begin{enumerate}
\item We can define a seminorm on the set of formulas given by
$$\lVert\phi(\bx)\rVert = \sup\{|\phi^\tA(\bar a)|\mid \tA\ \text{is a metric $\tau$-structure and $\bar a\in A^{|\bx|}$}\}$$
and $\lVert\phi(\bx)\rVert \in I_\phi$;
\item If $|\tau|\leq \kappa$, for some infinite cardinal $\kappa$, then the space of $L^\mathbb{R}_{\omega,\omega}(\tau)$ formulas equipped with this norm has density character $\leq\kappa$.
\end{enumerate}
\end{lemma}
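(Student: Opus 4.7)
The plan is to handle the two parts in sequence. For part (1), the first task is to verify that the supremum $\|\phi\|=\sup_{\tA,\bar a}|\phi^\tA(\bar a)|$ is actually a finite real number. The definition of the language attaches to each formula $\phi$ a compact bound $I_\phi\subseteq\mathbb{R}$, and a straightforward induction on the construction of $\phi$ (\Cref{the-language}) shows that $\phi^\tA(\bar a)\in I_\phi$ for every $\tau$-structure $\tA$ and every tuple $\bar a$: the atomic case is built into the definition of a $\tau$-structure; the connectives $+,\max,\min,q\cdot$ propagate recursively computable bounds; the constants $\bzero,\bone$ have singleton bounds; the quantifiers $\sup_{x_i},\inf_{x_i}$ preserve $I_\phi$; and the countable $\sup_n,\inf_n$ require a common compact bound by construction. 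Hence $|\phi^\tA(\bar a)|\le \max_{y\in I_\phi}|y|<\infty$, so $\|\phi\|$ lies in the compact range prescribed by the syntax. The seminorm axioms then follow from the corresponding pointwise identities: nonnegativity is immediate, $\|q\phi\|=|q|\,\|\phi\|$ comes from $|q\cdot f|=|q|\cdot|f|$, and $\|\phi+\psi\|\le\|\phi\|+\|\psi\|$ from the pointwise triangle inequality, both passed through $\sup_{\tA,\bar a}$.

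For part (2), I would bound the cardinality of $L^\mathbb{R}_{\omega,\omega}(\tau)$ directly as a set of syntactic objects. Each finitary formula is a finite labeled tree whose labels come from: countably many variable symbols $\{x_i\}_{i<\omega}$; the symbols of $\tau$ (at most $\kappa$ many); the constants $\bzero,\bone$; finitely many applications of the connectives $+,\max,\min$ together with rational scalar multiplications (countably many choices); and the quantifier symbols $\sup_{x_i},\inf_{x_i}$. The number of such finite trees is therefore at most $\kappa\cdot\aleph_0=\kappa$. Since a set is dense in itself under any topology, the seminormed space of finitary formulas (equivalently, its Hausdorff quotient by $\|\phi-\psi\|=0$) has density character at most $\kappa$.

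The one step that warrants genuine care is the inductive bookkeeping in part (1) that maintains $I_\phi$ coherently through the recursive clauses, particularly for $\phi+\psi$ and $q\phi$, where the bound set must be freshly computed, and for the $\sup_{x_i},\inf_{x_i}$ clauses, where one must confirm that taking a supremum or infimum of a function valued in a compact set still yields a value in that same set. Everything else reduces to routine cardinal arithmetic and standard manipulations of suprema.
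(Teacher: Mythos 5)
The paper states this lemma purely as a citation of \cite{benyaacov2017} and supplies no proof of its own, so there is no in-paper argument to compare against; judged on its own, your proof is correct and is the natural one for this paper's syntax. Two remarks. First, your part (2) in fact proves something stronger than density character $\le\kappa$: because the paper restricts the connectives to $+$, $\max$, $\min$, and multiplication by rational scalars, every finitary formula is a finite labeled tree over an alphabet of size $\le\kappa+\aleph_0=\kappa$, so the set of $L^\mathbb{R}_{\omega,\omega}(\tau)$-formulas itself has cardinality $\le\kappa$. In the original setting of \cite{benyaacov2017}, where arbitrary continuous connectives are admitted, this counting fails (there are already $2^{\aleph_0}$ unary connectives) and one genuinely needs a density argument, e.g.\ the lattice Stone--Weierstrass theorem with rational coefficients; your route is more elementary and is available precisely because of the paper's restricted connective set. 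Second, the assertion $\lVert\phi\rVert\in I_\phi$ in part (1) deserves one more line than you give it: the induction yields $\phi^{\tA}(\bar a)\in I_\phi$, hence $|\phi^{\tA}(\bar a)|$ lies in the image of $I_\phi$ under the absolute value, which is again compact, and the supremum of a nonempty subset of a compact subset of $\mathbb{R}$ belongs to that set by closedness; as literally stated the conclusion $\lVert\phi\rVert\in I_\phi$ tacitly assumes $I_\phi\subseteq[0,\infty)$ (or symmetry of $I_\phi$), which is an imprecision of the statement rather than of your argument. The seminorm verification, with homogeneity only over rational scalars, is fine as written.
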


\smallskip

\subsection{Weak Modulus of Continuity} First introduced in \cite{benyaacov2017}, weak moduli of continuity allow us to specify a large and rich enough subset of formulas from the language where we may take countable suprema and infima freely without worrying about the continuity, without losing expressive power.

\begin{definition}
A \textit{weak modulus of continuity} is a function $\Omega:[0,\infty)^\omega\to[0,\infty]$ that satisfies: $\Omega(\overline 0)=0$; $$\Omega(\overline r)\leq\Omega(\overline r+\overline s)\leq\Omega(\overline r)+\Delta(\overline s)$$
for all $\overline r,\overline s\in[0,\infty)^\omega$; $\Omega$ is lower-semicontinuous in the product topology (i.e. the premiages of rays $(y,\infty]$ are open), and continuous in each argument.\par
 We say that a $\tau$-formula $\phi$ of arity $n$ respects $\Omega$ if it respects $\Omega|_n$ as a modulus of continuity. 
 \end{definition}
 
 Fixing a weak modulus of continuity $\Omega$ is equivalent to fixing a coherent sequence of moduli $\Omega|_n$ for each arity:
 
 \begin{lemma}[\cite{benyaacov2017} Lemma 2.3]
 Let $\Omega:[0,\infty)^\omega\to[0,\infty]$ be a weak modulus of continuity, then, for every $n\geq1$, the truncations $\Omega|_n$ is an $n$-ary modulus of continuity and $\Omega(x_0,x_1\dotsc)=\sup_n\Omega|_n(x_0,\dotsc,x_{n-1})$.
 \end{lemma}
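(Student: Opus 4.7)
My plan is to prove the two assertions separately: first that each truncation $\Omega|_n$ is an honest $n$-ary modulus of continuity, and second that the pointwise supremum of the $\Omega|_n$ recovers $\Omega$. Most of the work consists in transferring the four axioms of a weak modulus (vanishing at $\bar 0$, subadditivity--monotonicity, lower-semicontinuity, and continuity in each argument) to statements about the finitary truncations. The two non-trivial points I expect to encounter are (i) showing $\Omega|_n$ is finite-valued (a priori $\Omega$ takes values in $[0,\infty]$), and (ii) upgrading the per-argument continuity of $\Omega$ to joint continuity of $\Omega|_n$.

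For the first claim, the equality $\Omega|_n(\bar 0) = \Omega(\bar 0) = 0$, monotonicity, and subadditivity are inherited directly by plugging zeros in the last $\omega\setminus n$ coordinates of the corresponding inequalities for $\Omega$. For finiteness: continuity of $\Omega$ in the first coordinate at $0$ (with $\Omega(\bar 0) = 0$) provides $\delta > 0$ with $\Omega(\delta,0,0,\dots) < \infty$; iterating subadditivity gives $\Omega(N\delta,0,0,\dots) \leq N\,\Omega(\delta,0,0,\dots)$, and monotonicity then makes the slice $r \mapsto \Omega(r,0,0,\dots)$ finite for all $r\geq 0$. The same argument works in each coordinate, and subadditivity across coordinates gives $\Omega|_n(\bar r) \leq \sum_{i<n}\Omega(r_i \bar e_i) < \infty$, where $\bar e_i$ denotes the $i$-th standard basis vector of $[0,\infty)^\omega$. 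For joint continuity, by comparing to the coordinatewise minimum and maximum of $\bar r, \bar s$ subadditivity yields
\[
\bigl|\Omega|_n(\bar r) - \Omega|_n(\bar s)\bigr| \leq \Omega|_n(|r_0 - s_0|,\dots,|r_{n-1} - s_{n-1}|) \leq \sum_{i<n} \Omega(|r_i - s_i|\,\bar e_i),
\]
and each summand tends to $0$ as $r_i \to s_i$ by continuity of $\Omega$ in the $i$-th coordinate.

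For the supremum identity, monotonicity of $\Omega$ gives $\Omega|_n(x_0,\dots,x_{n-1}) = \Omega(x_0,\dots,x_{n-1},0,0,\dots) \leq \Omega(x_0,x_1,\dots)$ for every $n$, hence $\sup_n \Omega|_n \leq \Omega$, and the same inequality shows the sequence is non-decreasing in $n$. For the reverse direction, the truncated tuples $(x_0,\dots,x_{n-1},0,0,\dots)$ converge to $(x_0,x_1,\dots)$ in the product topology as $n\to\infty$, so lower-semicontinuity of $\Omega$ gives
\[
\Omega(x_0,x_1,\dots) \leq \liminf_n \Omega(x_0,\dots,x_{n-1},0,0,\dots) = \sup_n \Omega|_n(x_0,\dots,x_{n-1}),
\]
where the last equality uses the monotonicity of the sequence established above.

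The main obstacle is the finiteness step: everything else amounts to restricting axioms for $\Omega$ to tuples supported in the first $n$ coordinates, but showing $\Omega|_n<\infty$ is the one place where I must actually combine three axioms at once (vanishing at $\bar 0$, continuity in each argument at $\bar 0$, and subadditivity). Once finiteness is in hand, the continuity of $\Omega|_n$ and the supremum identity are essentially forced by the shape of the definitions.
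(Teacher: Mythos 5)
The paper cites this result from \cite{benyaacov2017} (their Lemma 2.3) and gives no proof of its own, so there is no in-paper argument to compare against; your proof has to be judged on its own merits. Judged that way, it is correct and complete. You correctly isolate the two points that actually require work: that $\Omega|_n$ lands in $[0,\infty)$ rather than $[0,\infty]$, and that joint continuity of $\Omega|_n$ does not follow from per-argument continuity of $\Omega$ without an extra step. Both are handled soundly. The finiteness argument (continuity at $\bar 0$ in each argument produces a small point of finiteness, iterated subadditivity and monotonicity propagate it along that axis, and coordinatewise subadditivity then bounds $\Omega|_n(\bar r)$ by the finite sum $\sum_{i<n}\Omega(r_i\bar e_i)$) is exactly the right device. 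The Lipschitz-type estimate
\[
\bigl|\Omega|_n(\bar r)-\Omega|_n(\bar s)\bigr|\leq \Omega|_n\bigl(|r_0-s_0|,\dots,|r_{n-1}-s_{n-1}|\bigr)\leq \sum_{i<n}\Omega(|r_i-s_i|\bar e_i),
\]
which you derive by sandwiching against the coordinatewise minimum, is what converts per-argument continuity at $\bar 0$ into joint continuity everywhere. For the supremum identity, the implicit use of the sequential form of lower-semicontinuity is legitimate because $[0,\infty)^\omega$ with the product topology is first-countable, and the monotonicity of the truncation sequence lets you correctly replace the $\liminf$ by a $\sup$. I see no gaps.
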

 
 As in Section 5 of  \cite{benyaacov2017}, our focus in the later sections will be on universal weak-moduli: a weak-modulus such that every quantifier-free formula satisfies $\Omega$ after adding some number of initial dummy variables.

\begin{definition}[c.f. \cite{benyaacov2017} Definition 5.1]\label{Udef}
Suppose $\tau$ is a countable continuous vocabulary. A weak modulus $\Omega$ is universal for $L=L^\mathbb{R}_{\omega_1,\omega}(\tau)$ if it satisfies the following conditions:
\begin{enumerate}[label=(\roman*)]
\item For every atomic formula $\phi(x_0,\dotsc , x_{k-1})$, there is an $N<\omega$ such that
$$\Delta_\phi(r_0,\dotsc, r_{k-1})\leq \Omega|_N(0,\dotsc, 0, r_0,\dotsc, r_{k-1});$$
\item For every $k<\omega$ and every $M > 0$, there is an $N<\omega$ such that
$$M\cdot \Omega|_k (r_0,\dotsc,r_{k-1}) \leq \Omega|_N(0,\dotsc, 0, r_0, \dotsc, r_{k-1});$$
\item For every increasing sequence $i_0<i_1<\dotsc$, and every $\overline r\in [0,\infty)^\omega$, let $\overline s(i_k)=\overline r(k)$ and $\overline s(k)=0$ otherwise. Then $\Omega(\overline r)\leq\Omega(\overline s);$
\item For every $k,n<\omega$,
$$ \Omega|_k(r_0, \dotsc, r_{k-1}) +  \Omega|_n(s_0, \dotsc, s_{n-1}) \leq \Omega|_{k+n}(r_0,\dotsc, r_{k-1}, s_0, \dotsc, s_{n-1}).$$
\item For every $k<\omega$, let $d_k(x_0,\dotsc, x_{2k-1}) =  \max_{i<k} d(x_{i},x_{k+i})$. Then $\Delta_{d_k}\leq \Omega|_{2k}$.

\end{enumerate}
\end{definition}

\begin{lemma}[c.f. \cite{benyaacov2017} Propositions 5.2]\label{Upower}
Let $\Omega$ be a universal weak modulus for $L$, then
\begin{enumerate}[label=(\roman*)]
\item For every atomic formula $\phi(x_0,\dotsc , x_{k-1})$, there is an $N<\omega$ such that
$\psi(x_0,\dotsc, x_{N-1})=\phi(x_{N-k}, \dotsc, x_{N-1})$ respects $\Omega$;
\item For every formula $\phi(x_0,\dotsc , x_{k-1})$ that respects $\Omega$ and every $M > 0$, there is an $N<\omega$ such that $\psi(x_0,\dotsc, x_{N-1})=M\phi(x_{N-k}, \dotsc, x_{N-1})$ respects $\Omega$;
\item For every $M>0$ and every atomic formula $\phi(x_0,\dotsc , x_{k-1})$, there is an $N=N(M,\phi)$ such that for every increasing sequence $\sigma\in\mathbb{N}^k$ with $\min\sigma>N$, $\psi(x_0,\dotsc, x_{\max\sigma})=M\cdot\phi(x_{\sigma(0)},\dotsc,x_{\sigma(k-1)})$ is a quantifier-free $\tau$-formula that respects $\Omega$;
\item For all tuples $i_0<\dotsc < i_{n-1}< j_0 <\dotsc< j_{n-1}$, the formula
$$d_{\Omega|_n}((x_{i_0},\dotsc, x_{i_{n-1}}), (x_{ j_0},\dotsc, x_{j_{n-1}})) = \Omega(d(x_{i_0},x_{j_0}),\dotsc, d(x_{i_{n-1}},x_{j_{n-1}}))$$
respects $\Omega$. Moreover,
$$\max_{k<n} d(x_{i_k},x_{j_k})  \leq d^{\Omega|_n}((x_{i_0},\dotsc, x_{i_{n-1}}), (x_{ j_0},\dotsc, x_{j_{n-1}}))$$
\item $\Omega|_k$ is faithful for all $k<\omega$. 
\end{enumerate}
\end{lemma}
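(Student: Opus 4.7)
The plan is to derive (i)–(v) in sequence directly from the five clauses of \Cref{Udef}, exploiting that those clauses are engineered precisely to propagate continuity bounds through substitution, scaling, reindexing, and composition. For (i), take $N$ as granted by clause (i) of \Cref{Udef}; the substitution $\psi(x_0,\dotsc,x_{N-1})=\phi(x_{N-k},\dotsc,x_{N-1})$ has modulus $\Delta_\psi(r_0,\dotsc,r_{N-1})=\Delta_\phi(r_{N-k},\dotsc,r_{N-1})\leq\Omega|_N(0,\dotsc,0,r_{N-k},\dotsc,r_{N-1})\leq\Omega|_N(r_0,\dotsc,r_{N-1})$ by monotonicity. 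Part (ii) is identical, invoking clause (ii) in place of (i). Part (iii) combines (ii) with clause (iii): first apply (ii) to the atomic $\phi$ and the scalar $M$ to obtain $N_0$ with $M\phi(x_{N_0-k},\dotsc,x_{N_0-1})$ respecting $\Omega$; then for any increasing $\sigma$ with $\min\sigma>N_0$, clause (iii) of \Cref{Udef} (which spreads indices along an arbitrary increasing sequence) lets me move the trailing $k$ variables into positions $\sigma(0),\dotsc,\sigma(k-1)$ while preserving the modulus bound.

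The main work is (iv). First, I would justify that $d_{\Omega|_n}$ is in fact a formula of $\inflo$: $\Omega|_n$ is a continuous function of its arguments, so by the Stone–Weierstrass observation in the earlier remark it realizes as a continuous connective, expressible using the infinitary quantifiers applied to lattice polynomials in the atomic distances $d(x_{i_k},x_{j_k})$. To show this formula respects $\Omega$, I would chase the modulus: the composite is $\Omega|_n(r_{i_0}+r_{j_0},\dotsc,r_{i_{n-1}}+r_{j_{n-1}})$, which is at most $\Omega|_n(r_{i_0},\dotsc,r_{i_{n-1}})+\Omega|_n(r_{j_0},\dotsc,r_{j_{n-1}})$ by subadditivity of $\Omega|_n$, at most $\Omega|_{2n}(r_{i_0},\dotsc,r_{i_{n-1}},r_{j_0},\dotsc,r_{j_{n-1}})$ by clause (iv) of \Cref{Udef}, and at most $\Omega|_{j_{n-1}+1}(r_0,\dotsc,r_{j_{n-1}})$ after using clause (iii) to place the values at positions $i_0,\dotsc,j_{n-1}$ with zeros in between and then applying monotonicity. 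For the lower bound $\max_k d(x_{i_k},x_{j_k})\leq d_{\Omega|_n}(\cdot)$, I would show $\Omega|_n$ dominates each of its arguments: by clause (v) applied with $m\geq n$, $\Delta_{d_m}(r_0,\dotsc,r_{2m-1})=\max_{i<m}(r_i+r_{m+i})\leq\Omega|_{2m}(r_0,\dotsc,r_{2m-1})$; evaluating at the padded tuple $(r_0,\dotsc,r_{n-1},0,\dotsc,0)$ and pairing index $i$ with $m+i$ yields $r_i\leq\Omega|_{2m}(r_0,\dotsc,r_{n-1},0,\dotsc,0)$, which collapses to $\Omega|_n(r_0,\dotsc,r_{n-1})$ because $\Omega|_n$ is the literal truncation $\Omega|_n(\bar r)=\Omega(\bar r,0,0,\dotsc)$.

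Finally, (v) is immediate from the coordinate-domination established in (iv): $\Omega|_k(\bar r)=0$ forces each $r_i\leq\Omega|_k(\bar r)=0$, hence $\bar r=\bar 0$. I anticipate the only real subtlety is confirming $d_{\Omega|_n}$ is a bona fide formula of the logic; if a direct Stone–Weierstrass appeal causes trouble because $\Omega|_n$ is defined on all of $[0,\infty)^n$ while the cited remark concerns bounded connectives, I would truncate to the bounded range relevant for any particular structure (distances are bounded by the diameter) and glue truncations together using the infinitary $\sup_n$ and $\inf_n$.
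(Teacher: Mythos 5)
Your derivation is correct and is the standard unwinding of Definition \ref{Udef} (the paper itself states this lemma without proof, deferring to the cited source, so there is no in-paper argument to compare against); in particular your coordinate-domination argument for the ``moreover'' clause of (iv) via clause (v) of the definition, and the deduction of faithfulness (v) from it, are exactly what the added axiom is designed to give. The only patch needed is in part (iii): an atomic formula need not respect $\Omega$ before padding, so you must apply part (i) first and then part (ii) to the resulting padded formula, rather than applying part (ii) directly to the atomic $\phi$; the concern about $d_{\Omega|_n}$ being a bona fide formula is dealt with in the paper by the remark that treats it as quantifier-free (or adds a predicate symbol for it), in the same spirit as your proposed truncation.
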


Our definition of a universal weak modulus of continuity is largely the same as in \cite{benyaacov2017}. We add $\textit{(v)}$ to \Cref{Udef}, which is compatible with the original definition, and allows for some compatibility with the sup metric.

 \begin{lemma}[c.f. \cite{benyaacov2017} Proposition 5.3]
Suppose $\tau$ is a countable continuous vocabulary. Then there exists a weak modulus $\Omega$ that is universal for $L=L^\mathbb{R}_{\omega_1,\omega}(\tau)$.
 \end{lemma}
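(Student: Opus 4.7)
The construction follows \cite{benyaacov2017} Proposition 5.3 with a minor modification to accommodate the new condition (v). The key input is that $\tau$ being countable implies there are only countably many atomic $\tau$-formulas, so together with the countable family $\{d_k : k < \omega\}$ we obtain a countable collection $\Psi$ of ``basic'' functions whose prescribed moduli $\Delta_\psi$ we must dominate. Enumerate $\Psi = \{\psi_n : n<\omega\}$, writing $k_n$ for the arity of $\psi_n$ and $\Delta_n$ for $\Delta_{\psi_n}$.

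My approach is to define $\Omega$ as an infinite sum
$$\Omega(\bar r) = \sum_{j<\omega} M_j \cdot \Delta_{e(j)}\bigl(r_{\sigma_j(0)}, \ldots, r_{\sigma_j(k_{e(j)}-1)}\bigr),$$
where $(e(j), M_j, \sigma_j)_{j<\omega}$ is a carefully chosen enumeration of ``demands'': $e(j) \in \omega$ selects a basic function, $M_j \in \mathbb{N}$ is a scale factor, and $\sigma_j$ is a coordinate tuple of length $k_{e(j)}$. The enumeration is built inductively in $j$, meeting the next outstanding requirement from conditions (i)-(v) at each stage, while ensuring that each coordinate index $l$ appears in only finitely many $\sigma_j$'s.

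Once $\Omega$ is defined, verification of the weak modulus properties is term-by-term: $\Omega(\bar 0)=0$ since each $\Delta_\psi(\bar 0)=0$; subadditivity and monotonicity descend from the corresponding properties of each $\Delta_\psi$; continuity in each argument holds because only finitely many summands depend on any given coordinate, so the resulting slice is a finite sum of continuous functions; and lower-semicontinuity follows from $\Omega$ being the pointwise supremum of its continuous finite partial sums. Condition (iv) is automatic from the additive structure, and conditions (i) and (v) are witnessed by including demands of the form $(\psi, 1, (N-k,\ldots,N-1))$ for each basic $\psi$ of arity $k$ (the $d_k$'s being handled exactly like atomic formulas).

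The main technical challenge lies in balancing conditions (ii) and (iii) against the finite-coordinate-usage constraint. For (ii), given $k$ and $M$, the function $\Omega|_k$ is a finite sum (only finitely many $\sigma_j$'s lie inside $\{0,\ldots,k-1\}$), so $M\cdot\Omega|_k$ decomposes into finitely many terms $M\cdot M_j\cdot\Delta_{e(j)}$; one must include, at positions past $k$, companion demands $(e(j), M\cdot M_j, \sigma')$ realizing each of these via a suitable shift $\sigma'$ of $\sigma_j$. For (iii), the enumeration must be closed, up to domination, under spreading the coordinates of any $\sigma_j$ along an arbitrary increasing subsequence, which requires infinitely many copies of each $(\psi, M)$ at sufficiently varied positions. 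The bookkeeping that arranges all of this simultaneously --- while keeping coordinate usage finite per index --- is the heart of the argument and is exactly what the original proof in \cite{benyaacov2017} carries out; our only addition is to include the $d_k$ among the basic formulas from the outset, which requires no new ideas since each $d_k$ has a well-defined modulus of continuity and fits seamlessly into the enumeration.
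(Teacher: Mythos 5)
Your proposal is structured quite differently from the paper's actual argument, and as written it has a genuine gap. The paper's proof is a single explicit formula: fixing an enumeration $\{\phi_{2k+1}\}$ of atomic formulas and $\phi_{2k}=d_k$, and setting $\Delta_{\phi_0}=\mathrm{id}$, it takes
$$\Omega(\overline r) = \sum_{i=0}^\infty (i+1)\cdot \sup_{k\leq i}\Delta_{\phi_k}(r_i,\dotsc, r_i).$$
The crucial feature is that each summand is a \emph{diagonal} evaluation, so that the $i$-th term depends only on the single coordinate $r_i$. Conditions (iii) and (iv) then fall out immediately: spreading or shifting coordinates only moves a diagonal term $(i+1)\sup_{k\le i}\Delta_{\phi_k}(r_i,\dotsc,r_i)$ to a term with a larger coefficient and a sup over a larger index set, hence a dominating term. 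Condition (i) is recovered from diagonals by subadditivity together with monotonicity of each $\Delta_\phi$: one has $\Delta_\phi(r_0,\dotsc,r_{k-1})\leq\sum_{j<k}\Delta_\phi(r_j,\dotsc,r_j)$, and each summand is then dominated by the corresponding diagonal term at a shifted index.

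Your version instead uses demands with general coordinate tuples $\sigma_j$, and this is where the argument breaks. You correctly observe that continuity in each argument requires each coordinate to appear in only finitely many demands, and you also correctly observe that condition (iii) requires the family of demands to be closed under spreading $\sigma_j$ along arbitrary increasing subsequences. These two requirements are in direct conflict: for a fixed basic modulus of arity $\geq 2$, say $\Delta_d(r_a,r_b)=r_a+r_b$, closure under spreading forces a demand at $(a,b)$ for every increasing pair $a<b$, so the coordinate $a$ is touched by infinitely many demands. The infinite-sum formula in the paper is not merely a convenient packaging of ``the same bookkeeping'' — the diagonal evaluation $\Delta_\phi(r_i,\dotsc,r_i)$ and the built-in $\sup_{k\le i}$ and coefficient $(i+1)$ are exactly what dissolve this conflict, and your proposal does not contain that idea. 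Similarly, your claim that condition (iv) is ``automatic from the additive structure'' needs the one-coordinate-per-term property; for demands whose $\sigma_j$ straddles the boundary at $k$, the decomposition $\Omega|_{k+n}(\overline r,\overline s)\geq \Omega|_k(\overline r)+\Omega|_n(\overline s)$ is not automatic. To repair the argument, replace your demand-enumeration with the paper's diagonal formula; the rest of your verification (subadditivity and monotonicity descending term-by-term, lower-semicontinuity from the finite partial sums, the $d_k$ handled alongside the atomic formulas) then goes through as you describe.
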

  
\begin{proof}
Fix an enumeration $\{\phi_{2k+1}\}_{k\in\omega}$ of all atomic formulas in $L$, and let
$$\phi_{2k} = d_k(x_0,\dotsc, x_{2k-1}) =  \max_{i<k} d(x_{i},x_{k+i}).$$ 
Then, with $\Delta_{\phi_0}=id$, we write 
$$\Omega_U(L)(r_0,r_1,r_2,\dotsc) = \sum_{i=0}^\infty (i+1)\cdot \sup_{k\leq i} \Delta_{\phi_k}(r_i,\dotsc, r_i)$$
is a universal modulus. Note that $\Omega$ has range in $[0,\infty]$, so the infinite sum might not converge. 
\end{proof}

\begin{remark} There is one important caveat not present in \cite{benyaacov2017}: since we put a restriction on the connectives we are allowed to use, it is possible that the $d_{\Omega|_n}$ are no longer quantifier-free, but instead are the uniform limit of quantifier-free formulas. However, as these formulas will be our distances for the Cartesian powers, we will treat them as quantifier free formulas. One way to do this formally is to expand the language by adding a predicate for each of the distances $d_{\Omega|_n}$. Each $\tau$-structure has a unique expansion to this new language and $\Omega$ is a universal weak-modulus for the new language. 
\end{remark}

\subsection{Quantifier complexity}

 \begin{definition}
 Let  $\phi(x_1,\dotsc, x_n)$ be a $\tau$-formula, $\Omega$ a weak-modulus of continuity, and $I\subset\mathbb{R}$ compact. We say that $\phi$ is a weak-$(\Omega,I)$-formula if it respects $\Omega|_n$ as a uniform modulus of continuity and its image is contained in $I$.
 \end{definition}
 
This definition is a less restrictive class than that defined in Section 2.2 of  \cite{benyaacov2017}. However, we believe it is the correct notion to analyze definability.\par

Having a notion of prenex normal form allows us to define the quantifier complexity of a formula as follows:

\begin{definition}
Let $\alpha$ be an ordinal. We recursively define the quantifier complexity of  a $\tau$-formula $\phi$ as follows:
\begin{enumerate}
\item If $\phi(\overline x)$ is a quantifier-free $\tau$-formula, then $\phi(\overline x)$ is a $\sup^0=\inf^0$ formula.
\item $\phi(\overline x)$ is an $\inf^\alpha$ formula if it is logically equivalent to one of the form $\inf_n \inf_{\overline y_n} \psi_n(\overline x,\overline y_n)$ where each $\psi_n(\overline x,\overline y_n)$ is $\sup^\beta$ formula for some $\beta<\alpha$.
\item $\phi(\overline x)$ is a $\sup^\alpha$ formula if is logically equivalent to one of the form $\sup_n\sup_{\overline y_n} \psi_n(\overline x,\overline y_n)$ where each $\psi_n(\overline x,\overline y_n)$ is $\inf^\beta$ formula for some $\beta<\alpha$.
\end{enumerate}
\end{definition}

By induction on complexity, we get that the quantifier complexity of every $\tau$-formula is a countable ordinal. 

\begin{lemma}\label{ok-switch}
For any weak-$(\Omega,I)$-formulas $\phi_n(\bx, \by)$ we have
$$\psi(\bx) =  \inf_{\by} \inf_n \phi_n(\bx, \by) = \inf_n \inf_{\by} \phi_n(\bx, \by).$$
(i.e. we can change the order of the quantifiers up to logical equivalence). Moreover, 
\begin{enumerate}
\item the formulas $\phi^{\by}_n(\bx) = \inf_{\by} \phi_n(\bx, \by)$ are weak $(\Omega,I)$-formulas, and thus so is $\psi(\bx)$;
\item if $\phi_n(\bx,\by)$ is $\inf^\alpha$, for $\alpha>0$, then so is $\phi^{\by}_n(\bx)$. In particular, if every $\phi_n(\bx,\by)$ is $\inf^\alpha$,  for $\alpha>0$, then so is $\psi(\bx)$.
\end{enumerate}
A similar result holds when $\inf$ is replaced for $\sup$ everywhere.
\end{lemma}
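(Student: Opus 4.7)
The plan is to first verify the pointwise equality of the two expressions, then to track moduli of continuity and bounds so that every intermediate expression is an admissible formula, and finally to observe that the quantifier complexity classification survives the reshuffling.

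For the pointwise equality, fix a metric $\tau$-structure $\tA$ and a tuple $\bar a \in \tA^{|\bx|}$. Both expressions compute the infimum of the set $\{\phi_n^\tA(\bar a, \bar b) : n \in \omega,\ \bar b \in \tA^{|\by|}\}$, since iterated infimum over a product domain equals infimum over the product. This gives logical equivalence of the two sides once both are known to be formulas in the first place.

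For part (1), I would argue as follows. By hypothesis each $\phi_n$ respects the modulus $\Omega|_m$ with $m = |\bx| + |\by|$ and the bound $I$; clause (6) of \Cref{the-language} then guarantees that $\phi_n^{\by}(\bx) = \inf_{\by} \phi_n(\bx, \by)$ is a well-formed formula respecting the reduced modulus $\hat\Omega|_{|\bx|}(\bar r) = \Omega|_m(\bar r, \bar 0)$ and the same bound $I$. Because this reduction does not depend on $n$, the family $\{\phi_n^{\by}(\bx)\}_{n \in \omega}$ shares a common modulus and bound, so clause (7) applies and $\inf_n \phi_n^{\by}(\bx)$ is a weak-$(\Omega,I)$-formula in $\bx$. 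This is the one place where the weak-$(\Omega,I)$ hypothesis is genuinely needed.

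For part (2), the plan is to absorb the outer $\inf_{\by}$ into the block of existential quantifiers already present inside each $\phi_n$. Writing $\phi_n = \inf_k \inf_{\bar z_{n,k}} \theta_{n,k}$ with each $\theta_{n,k}$ a $\sup^{\beta_{n,k}}$ formula for some $\beta_{n,k} < \alpha$, the pointwise commutation already established rewrites $\inf_{\by} \phi_n$ as $\inf_k \inf_{\by\,\bar z_{n,k}} \theta_{n,k}$, which is in $\inf^\alpha$ form since the variables $\by$ simply join the existing block of single-variable infima. For $\psi$ itself I would fix a bijection $\omega \cong \omega \times \omega$ to collapse $\inf_n \inf_k$ into a single $\inf_m$, yielding $\psi = \inf_m \inf_{\by\,\bar z_m} \theta_m$ with each $\theta_m$ of complexity below $\alpha$; hence $\psi$ is $\inf^\alpha$. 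The dual statement for $\sup$ follows by applying $\sim$ together with \Cref{prenex-rules}, or equivalently by symmetry. I expect the only real obstacle is the modulus bookkeeping in part (1); the complexity classification is essentially formal once the infima can be freely shuffled and merged.
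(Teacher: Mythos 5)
Your proposal is correct and follows essentially the same approach as the paper's proof: the pointwise equality is the elementary observation that iterated infima over a product agree (the paper phrases this via a strict-inequality chase with $t\in I$, which amounts to the same thing), part (1) is the observation that the reduced modulus for $\inf_{\by}\phi_n$ coincides with $\Omega|_{|\bx|}$ uniformly in $n$ so clause (7) applies, and part (2) is absorbing $\by$ into the inner variable blocks and merging $\inf_n\inf_k$ via a pairing. The paper states these last two steps without detail ("follows from the rules for constructing formulas" and "from the definition of $\inf^\alpha$"), so you have simply spelled out the same argument more explicitly; nothing is missing or in error.
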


\begin{proof}
Fix a metric $\tau$-structure $\tA$,  $\bar a\in \tA^{|\bx|}$, and some real $t\in I$. Suppose that 
$$ (\inf_{\by} \inf_n \phi_n(\bar a, \by))^{\tA} < t.$$ Then, there is some $\bar b\in  \tA^{|\by|}$ and some $n<\omega$ such that $\phi_n(\bar a,\bar b)^{\tA}<t$. Therefore,
$$  (\inf_n \inf_{\by} \phi_n(\bar a, \by))^{\tA}  \leq \phi_n(\bar a,\bar b)^{\tA}<t.$$
The other direction follows with the same proof.\par
The first moreover statement follows from the rules for constructing formulas, and the fact that the countable infimum of weak-$(\Omega,I)$-formulas is a weak-$(\Omega,I)$-formula. The second moreover statement follows from the definition of $\inf^\alpha$ formulas.
\end{proof}

\subsection{Definable Predicates}

\begin{definition} 
Let $\tA$ be a metric $\tau$-structure. A \textit{definable predicate} for $\tA$ is a function $P:A^n\to I$, for some closed interval $I\subset\mathbb{R}$, such that $P$ is the uniform limit of $\tau$-formulas. We say that $P$ is definable over $B\subset A$ if $P$ is the uniform limit of $\tau$-formulas with parameters from $B$. Furthermore, we say that $P$ is a weakly-$(\Omega,I)$ if it satisfies $\Omega$ as a weak modulus of continuity, and its image is contained in $I$.
\end{definition}

We want to extend our notions of complexity to definable predicates. Doing so necessitates the following lemma:

\begin{lemma}\label{good-approx}
Fix a weak modulus of continuity $\Omega$, and a closed interval $I=[I_{\min},I_{\max}]\subset \mathbb{R}$. Let $P:A^n\to I$ be a weakly-$(\Omega,I)$ definable predicate, and $\alpha>0$. Further suppose that $P$ is the uniform limit of $\inf^\alpha$-$\tau$-formulas $\{\phi_n(\bar x)\}_{n<\omega}$. Then $P$ is the uniform limit of weak-$(\Omega,I)$-$\inf^\alpha$-$\tau$-formulas.\par
A similar result holds when $\inf$ is replaced for $\sup$ everywhere.
\end{lemma}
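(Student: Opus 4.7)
The plan is to apply a McShane-type extension to regularize each approximant without changing its quantifier complexity. Let $\{\phi_k\}$ be $\inf^\alpha$-$\tau$-formulas with $\|\phi_k - P\|_\infty \to 0$, and define
$$\psi_k(\bar x) := \inf_{\bar y}\bigl(\phi_k(\bar y) + d_\Omega(\bar x, \bar y)\bigr),$$
where $d_\Omega(\bar x, \bar y) = \Omega(d(x_0, y_0), \dotsc, d(x_{n-1}, y_{n-1}))$ is the $\Omega$-pseudodistance on $n$-tuples, treated as a quantifier-free formula (via the language expansion noted after the universal weak modulus construction, if necessary).

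First, $\psi_k$ respects $\Omega$ by a standard triangle-inequality computation using the subadditivity of $\Omega$ (the infimum of an $\Omega$-Lipschitz family is $\Omega$-Lipschitz). Second, $\psi_k \to P$ uniformly: since $P$ itself respects $\Omega$, the identity $P(\bar x) = \inf_{\bar y}(P(\bar y) + d_\Omega(\bar x, \bar y))$ holds (the infimum is attained at $\bar y = \bar x$ and the triangle inequality bounds it from below), so
$$\|\psi_k - P\|_\infty \leq \|\phi_k - P\|_\infty \to 0.$$

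The main technical step is showing $\psi_k$ is still $\inf^\alpha$. Writing $\phi_k(\bar y) = \inf_m \inf_{\bar z_m} \theta_m(\bar y, \bar z_m)$ with each $\theta_m$ being $\sup^\beta$ for some $\beta < \alpha$, and reordering the infima via \Cref{ok-switch}, we obtain
$$\psi_k(\bar x) = \inf_m \inf_{\bar y, \bar z_m}\bigl(\theta_m(\bar y, \bar z_m) + d_\Omega(\bar x, \bar y)\bigr).$$
A routine induction on quantifier complexity shows that adding a quantifier-free formula to a $\sup^\beta$ (resp.\ $\inf^\beta$) formula stays $\sup^\beta$ (resp.\ $\inf^\beta$): push the quantifier-free addend through all quantifiers and countable $\sup$/$\inf$'s using the prenex rules and their countable analogues. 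Hence each inner summand is $\sup^\beta$, so $\psi_k$ is $\inf^\alpha$ (using $\alpha \geq 1$).

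Finally, to enforce the image bound, pick rationals $a_k, b_k$ with $I_{\min} < a_k < I_{\min} + 1/k$ and $I_{\max} - 1/k < b_k < I_{\max}$ (the degenerate case $I_{\min} = I_{\max}$ is trivial) and set $\hat\psi_k := \max(a_k \bone, \min(b_k \bone, \psi_k))$. Then $\hat\psi_k$ has image in $[a_k, b_k] \subset I$, still respects $\Omega$ (min/max with a constant is $1$-Lipschitz in the other argument), still has $\inf^\alpha$-complexity (by the same pushing argument with the quantifier-free rational constants), and $\|\hat\psi_k - P\|_\infty \leq \|\psi_k - P\|_\infty + 1/k \to 0$. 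The hardest step is preserving the $\inf^\alpha$ complexity through the McShane construction; this depends essentially on $d_\Omega$ being quantifier-free. The dual statement for $\sup^\alpha$ follows from the upper extension $\psi_k(\bar x) = \sup_{\bar y}(\phi_k(\bar y) - d_\Omega(\bar x, \bar y))$.
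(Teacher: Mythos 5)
Your proof takes essentially the same route as the paper: both regularize by the McShane/inf-convolution $\inf_{\bar y}(\cdot + d_\Omega(\bar x,\bar y))$ to force the modulus $\Omega$, clamp to force the range, and invoke the prenex rules together with \Cref{ok-switch} to see that this preserves $\inf^\alpha$ complexity. The only substantive difference is the order of the two steps (the paper clamps each $\phi_n$ to $I$ \emph{before} convolving; you convolve first and then clamp to rational $[a_k,b_k]\subset I$), and your use of rational endpoints actually handles a wrinkle the paper passes over silently: the language only permits rational scalar multiples of $\bone$, so $\max(I_{\min},\min(I_{\max},\phi_n))$ need not be a literal $\tau$-formula when the endpoints of $I$ are irrational, whereas your $\max(a_k\bone,\min(b_k\bone,\psi_k))$ always is.
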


\begin{proof}
Write
$$\phi_n^{I}(\bx) =  \max(I_{\min}, \min(I_{\max}, \phi_n(\bx))) $$
and
$$\phi_n^{(\Omega,I)}(\bx) =  \inf_{\bar y} \left(\phi_n^I(\bar y) + d^\Omega(\bx, \bar y) \right)$$
It is clear from construction that $\phi_n^{(\Omega,I)}$ has its image contained in $I$, and it is an $\inf^\alpha$ formula by \Cref{prenex-rules} and \Cref{ok-switch}.\par

We now show that $\phi_n^{(\Omega,I)}(\bx)$  satisfies $\Omega$. Indeed, fix $\bar a,\bar b\in A^{\bx}$, and assume without loss of generality that $\phi_n^{(\Omega,I)}(\bar a)^\tA\geq \phi_n^{(\Omega,I)}(\bar b)^\tA$. Thus, it is enough to show that 
$$ \phi_n^{(\Omega,I)}(\bar a)^\tA - \phi_n^{(\Omega,I)}(\bar b)^\tA \leq d^\Omega(\bar a,\bar b)^\tA.$$
Fix $\epsilon>0$, and  $\bar z\in A^{\bx}$ such that 
$$\phi_n^{(\Omega,I)}(\bar b)^\tA  \geq \phi_n^I(\bar z)^\tA + d^\Omega(\bar b,\bar z)^\tA -\epsilon$$
which is possible by the definition of $ \phi_n^{(\Omega,I)}$. Note that we also have
$$\phi_n^{(\Omega,I)}(\bar a)^\tA  \leq \phi_n^I(\bar z)^\tA + d^\Omega(\bar a,\bar z)^\tA $$
by definition. Now
\begin{align*}
\phi_n^{(\Omega,I)}(\bar a)^\tA - \phi_n^{(\Omega,I)}(\bar b)^\tA &\leq \phi_n^{(\Omega,I)}(\bar a)^\tA - \phi_n^I(\bar z)^\tA - d^\Omega(\bar b,\bar z)^\tA + \epsilon\\
&\leq \phi_n^I(\bar z)^\tA  + d^\Omega(\bar a,\bar z)^\tA - \phi_n^I(\bar z)^\tA - d^\Omega(\bar b,\bar z)^\tA + \epsilon\\
&=  d^\Omega(\bar a,\bar z)^\tA - d^\Omega(\bar b,\bar z)^\tA + \epsilon\\
&\leq  d^\Omega(\bar a,\bar b)^\tA +  d^\Omega(\bar z,\bar b)^\tA - d^\Omega(\bar b,\bar z)^\tA + \epsilon\\
&\leq  d^\Omega(\bar a,\bar b)^\tA+ \epsilon
\end{align*}
and taking $\epsilon\to 0$ gives the desired result.\par
It remains to show that $\phi_n^{(\Omega,I)}(\bar x)\to P(\bar x)$ uniformly. Fix $\epsilon>0$, and $n<\omega$ such that $\norm{P-\phi_n}_\infty<\epsilon$. Note that since the image of $P$ is contained in $I$, and $\phi_n$ approaches $P$ everywhere, we also get that $\norm{P-\phi^I_n}_\infty<\epsilon$. The definition of $\phi_n^{(\Omega,I)}$ readily gives us that 
$$\phi_n^{(\Omega,I)}(\bar a)^\tA - P(\bar a)^\tA \leq \phi_n^I(\bar a)^\tA-P(\bar a)^\tA<\epsilon$$
for any $\bar a\in A^{|\bx|}$.\par
For the other inequality, we use the same idea as in the previous part of this proof. Take  $\bar z\in A^{\bx}$ such that
$$\phi_n^{(\Omega,I)}(\bar a)^\tA  \geq \phi_n^I(\bar z)^\tA + d^\Omega(\bar a,\bar z)^\tA -\epsilon.$$
Then, using the fact that $P$ respects $\Omega$ as a modulus of continuity we can reason that
\begin{align*}
P(\bar a)^\tA - \phi_n^{(\Omega,I)}(\bar a)^\tA &\leq P(\bar a)^\tA - \phi_n^I(\bar z)^\tA - d^\Omega(\bar a,\bar z)^\tA +\epsilon\\
&\leq P(\bar z)^\tA + d^\Omega(\bar a,\bar z)^\tA - \phi_n^I(\bar z)^\tA - d^\Omega(\bar a,\bar z)^\tA +\epsilon\\
&= P(\bar z)^\tA  - \phi_n^I(\bar z)^\tA +\epsilon\\
&\leq 2\epsilon
\end{align*}
This completes the proof. For the $\sup$ case we use
$$\phi_n^{(\Omega,I)}(\bx) = \sup_{\bar y} \left(\phi_n^I(\bar y) - d^\Omega(\bx, \bar y) \right)$$
and proceed in a similar manner.
\end{proof}

 The case $\alpha=0$ is a bit more subtle: if a weakly-$(\Omega,I)$ definable predicate $P$ is the uniform limit of quantifier-free $\tau$-formulas, then it is both the uniform limit of weak-$(\Omega,I)$-$\inf^1$-$\tau$-formulas and the uniform limit of weak-$(\Omega,I)$-$\sup^1$-$\tau$-formulas. Thus, we could see these predicates as the continuous analogue of $\Delta^0_1$. 
 
\begin{definition}
For $\alpha>0$, we say that a definable predicate $P$ is weakly-$(\Omega,I)$-$\inf^\alpha$ if it is the uniform limit of weak-$(\Omega,I)$-$\inf^\alpha$-$\tau$-formulas. Similarly with $\inf$ is replaced for  $\sup$ everywhere.
\end{definition}

\subsection{Definable Sets}

Starting in Section 4, we will be exploring the quantifier complexity of  formulas that define automorphism orbits inside a metric structure. Hence, here we recall the definition of definability in continuous logic, as it is a bit more subtle than what one would expect:

\begin{definition}[\cite{benyaacov2008}]
Let $\tA$ be a metric $\tau$-structure, $D$ a metrically closed subset of $A^n$, and $\partial(\bx,\bar y)$ a $\tau$-formula that defines a complete, bounded distance on $A^n$. We say that $D$ is $\partial$-definable with parameters from $B\subset A$ if the function 
$$\partial^\tA(\overline x,D): \bx\mapsto \inf_{\bar a\in D} \partial(\bx,\bar a)$$
 is an $L(B)$-definable predicate. That is, there are $L(B)$-formulas $\phi_n(\overline x)$ such that $\phi^\tA_n(\overline x) \to \partial^\tA(\overline x, X)$ uniformly. Furthermore, for $\alpha$ an ordinal, we say that $D$ is $(\Omega,\inf^\alpha)$-definable if the function $d^\Omega(\bx,D)$ is the uniform limit of weak-$(\Omega,\inf^\alpha)$-$\tau$-formulas. 
\end{definition}

This notion of definability is stronger than asking that $X$ be the zero-set of some formula, but it has a number of natural advantages. We refer the reader to Chapter 9 of \cite{benyaacov2008} for the details, but we record the following results as we will use them later:

\begin{lemma}[Proposition 2.10 of \cite{benyaacov2008}]\label{Henson-alpha}
Let $F,G:X\to [0,1]$ be arbitrary functions such that
$$\forall \epsilon>0\ \exists \delta>0\ \forall x\in X\ \left(F(x)\leq \delta\Rightarrow G(x)\leq\epsilon\right).$$
Then, there exists a continuous, increasing function $\alpha:[0,1]\to[0,1]$ such that $\alpha(0)=0$ and
$$\forall x\in X\left (G(x)\leq \alpha(F(X))\right)$$
\end{lemma}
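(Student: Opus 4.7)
The plan is to extract from the hypothesis a discrete sequence of thresholds and then build $\alpha$ as a piecewise linear upper envelope interpolating through them. First, for each $n\geq 1$, apply the hypothesis with $\epsilon=2^{-n}$ to obtain $\delta_n>0$ such that $F(x)\leq \delta_n$ implies $G(x)\leq 2^{-n}$. By replacing $\delta_n$ with $\min(\delta_n,\delta_{n-1}/2,1/n)$ if necessary, we may assume that $(\delta_n)_{n\geq 1}$ is strictly decreasing, with $\delta_1\leq 1$ and $\delta_n\to 0$; shrinking $\delta_n$ only strengthens the implication, so the defining property is preserved.

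Next, define $\alpha:[0,1]\to[0,1]$ by setting $\alpha(0)=0$, $\alpha(\delta_n)=2^{-n+1}$ for each $n\geq 1$, $\alpha(t)=1$ on $[\delta_1,1]$, and extending linearly on each interval $[\delta_{n+1},\delta_n]$. By construction $\alpha$ is nondecreasing, takes values in $[0,1]$, and is continuous on $(0,1]$ (the left and right values at each $\delta_n$ agree). Continuity at $0$ follows because any $t\in(0,\delta_1]$ lies in some $(\delta_{n+1},\delta_n]$, forcing $\alpha(t)\leq 2^{-n+1}$, and $2^{-n+1}\to 0$ as $t\to 0^+$.

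Finally, verify the inequality $G(x)\leq \alpha(F(x))$ by cases. If $F(x)\geq \delta_1$, then $\alpha(F(x))=1\geq G(x)$ trivially. If $F(x)=0$, then $F(x)\leq \delta_n$ for every $n$, so $G(x)\leq 2^{-n}$ for every $n$, giving $G(x)=0=\alpha(0)$. Otherwise $F(x)\in(\delta_{n+1},\delta_n]$ for a unique $n\geq 1$; then $F(x)\leq \delta_n$ yields $G(x)\leq 2^{-n}$, while the monotonicity of $\alpha$ gives $\alpha(F(x))\geq \alpha(\delta_{n+1})=2^{-n}$, so $G(x)\leq \alpha(F(x))$.

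There is no real obstacle here: the argument is bookkeeping around the quantifier $\forall\epsilon\exists\delta$ to obtain a countable skeleton $(\delta_n)$, followed by a standard piecewise-linear construction. The only point that requires a moment's care is matching the value $\alpha(\delta_{n+1})=2^{-n}$ to the bound $G\leq 2^{-n}$ valid on $\{F\leq \delta_n\}$, i.e.\ reading the index off by one so that the interpolated lower envelope still dominates $G$.
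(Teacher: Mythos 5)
Your proof is correct and is essentially the same as the paper's: both build a piecewise-linear $\alpha$ through a decreasing sequence of thresholds tending to $0$, with the values shifted by one index so that the interpolant dominates $G$ on each interval. The only cosmetic difference is that the paper first introduces the envelope $g(t)=\sup\{G(x)\mid F(x)\leq t\}$ and interpolates through its values, whereas you work directly with the $\delta_n$'s supplied by the hypothesis for $\epsilon=2^{-n}$.
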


\begin{proof}
Let $g:[0,1]\to [0,1]$ be a (possibly discontinuous) function defined by
$$g(t) = \sup \{G(x)\mid F(x)\leq t\}$$
for $t\in [0,1]$. Note that $g$ is an increasing function and that $G(x)\leq g(F(x))$. Also, the assumption on $F$ and $G$ implies that $g(0)=0$ and that $g(t)$ converges to $0$ as $t\to 0$.\par
Now we construct an increasing, continuous function $\alpha:[0,1]\to[0,1]$ with $\alpha(0)=0$ and $g(t)\leq \alpha(t)$ as follows: let $(t_n)_{n\in\omega}$ be a decreasing sequence in $[0,1]$ with $t_0=1$ and $\lim_{n\to\infty}t_n=0$. Set $\alpha(0)=0$, $\alpha(1)=1$, and $\alpha(t_n)=g(t_{n-1})$. Finally, take $\alpha$ to be linear on each segment of the form $[t_n,t_{n-1}]$.\par
By construction, $\alpha$ is continuous, as $g$ is continuous at $0$. Moreover, $g(t)\leq \alpha(t)$: to see this, note that for all $t\in[0,1]$ there is an $n\geq 1$ such that $t_{n+1}\leq t\leq t_{n}$, so $\alpha(t)$ is a convex combination of $g(t_n)$ and $g(t_{n-1})$, with $g(t)\leq g(t_n)\leq g(t_{n-1})$. Thus, $g(t)\leq g(t_n)\leq \alpha(t)$.
\end{proof}

This lemma allows us to prove the main result regarding definability:

\begin{theorem}[Proposition 9.19 of \cite{benyaacov2008}]\label{define}
Let $\tA$ be a metric $\tau$-structure, $D\subset A^n$ be a closed, nonempty subset, and $\partial(\bx,\bar y)$ a $\tau$-formula that defines a complete, bounded distance on $A^n$. Then the following are equivalent:
\begin{enumerate}
\item $D$ is a $\partial$-definable set; 
\item There is a definable predicate $P(\bar x)$ that vanishes on $D$, and for every $\epsilon>0$ there is a $\delta>0$ such that if $P^\tA(\bar x)<\delta$, then $\partial^\tA(\bar x,D) \leq \epsilon$;
\item There is a sequence $\{\phi_m\mid m\geq 1\}$ of $\tau$-formulas and a sequence  $\{\delta_m\mid m\geq 1\}$ of positive reals such that for all $m\geq1$ and all $\bx\in A^n$:
$$\bx\in D\Rightarrow \phi_n^\tA(\bx)=0$$
$$ \phi_n^\tA(\bx)\leq \delta_m \Rightarrow \partial^\tA(\bx,D)\leq 2^{-m};$$
\item Quantification over $D$ is possible: for every $\tau$-formula $\psi(\overline z,\overline x)$ with $k=|\overline z|$ and $n=|\overline x|$, if the map
$$\psi(\overline z,\overline x): (A^k,\rho)\times (A^n,\partial)\to (\mathbb{R}, d_{\text{euclidean}})$$
is uniformly continuous for some $\tau$-formula $\rho(\overline z,\overline w)$ that defines a complete, bounded distance on $A^k$. Then, there is a  definable predicate $Q(\overline y)$ such that for all $\overline a\in A^k$
$$Q^{\tA}(\overline a) = \inf_{\overline b\in D} \psi^{\tA}(\overline a,\overline b)$$
and similarly for $\sup$.
\end{enumerate}
The same statement holds over parameters. 
\end{theorem}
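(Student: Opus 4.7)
The plan is to establish the cycle $(1) \Rightarrow (2) \Rightarrow (3) \Rightarrow (4) \Rightarrow (1)$. Two of these are essentially immediate: for $(1) \Rightarrow (2)$ take $P = \partial^{\tA}(\cdot, D)$, which vanishes on $D$ and satisfies the required $\delta$-$\epsilon$ condition with $\delta = \epsilon$; for $(4) \Rightarrow (1)$ apply $(4)$ to $\psi(\bar z, \bar x) = \partial(\bar z, \bar x)$ with $k = n$ and $\rho = \partial$, which is uniformly continuous on $(A^n,\partial)\times(A^n,\partial)$ by the triangle inequality, producing $\partial^{\tA}(\cdot, D)$ as a definable predicate.

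For $(2) \Rightarrow (3)$, start with $\tau$-formulas $\phi_n$ converging uniformly to $P$; after replacing $\phi_n$ by $|\phi_n|$ we may assume $\phi_n \geq 0$, and then $c_n := \sup_{\bar a \in D} \phi_n^{\tA}(\bar a) \to 0$ because $P$ vanishes on $D$. Pick rationals $q_n > c_n$ with $q_n \to 0$ and replace $\phi_n$ by $\phi_n \dot- q_n$; these are $\tau$-formulas that still converge uniformly to $P$ (using $P \geq 0$) and now vanish on $D$ by construction. To obtain the $\delta_m$'s, use the $\delta$-$\epsilon$ condition of $(2)$ to choose $\eta_m > 0$ with $P(\bar x) < \eta_m \Rightarrow \partial^{\tA}(\bar x, D) \leq 2^{-m}$, then take $\phi_m$ close enough to $P$ that $\sup|\phi_m - P| < \eta_m/2$ and set $\delta_m = \eta_m/2$, so that $\phi_m^{\tA}(\bar x) \leq \delta_m$ forces $P^{\tA}(\bar x) < \eta_m$.

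The substantive step is $(3) \Rightarrow (4)$, which rests on a penalty-function trick. Fix $\psi(\bar z, \bar x)$ uniformly continuous from $(A^k, \rho) \times (A^n, \partial)$ to $\mathbb{R}$, let $\omega$ be a modulus of continuity for $\psi$ in the $\bar x$ variable that is uniform in $\bar z$, let $C$ bound $|\psi|$, and write $Q(\bar z) = \inf_{\bar b \in D} \psi^{\tA}(\bar z, \bar b)$. After replacing $\phi_m$ by $|\phi_m|$ (which preserves $(3)$) and picking rationals $M_m$ with $M_m \delta_m \geq 2C$, define the $\tau$-formulas
$$Q_m(\bar z) = \inf_{\bar x} \bigl(\psi(\bar z, \bar x) + M_m |\phi_m(\bar x)|\bigr).$$
The bound $Q_m \leq Q$ is clear, since for $\bar b \in D$ the penalty vanishes. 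For the reverse, fix $\bar x$: if $|\phi_m(\bar x)| \geq \delta_m$, the penalty alone exceeds $2C$, so the whole expression exceeds $Q(\bar z)$; otherwise $\phi_m^{\tA}(\bar x) \leq \delta_m$ yields $\partial^{\tA}(\bar x, D) \leq 2^{-m}$ by $(3)$, so some $\bar b \in D$ has $\partial(\bar x, \bar b) \leq 2^{-m+1}$, and then $\psi(\bar z, \bar x) \geq \psi(\bar z, \bar b) - \omega(2^{-m+1}) \geq Q(\bar z) - \omega(2^{-m+1})$. Taking the infimum gives $Q_m \geq Q - \omega(2^{-m+1})$, so $Q_m \to Q$ uniformly and $Q$ is a definable predicate; the $\sup$ case runs symmetrically with penalty $-M_m|\phi_m|$. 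The main obstacle is the calibration in this step: $M_m$ must be large enough that $M_m \delta_m$ dominates the range of $\psi$, while the residual error on $\bar x$ with small $\phi_m(\bar x)$ is controlled by $\omega(2^{-m+1}) \to 0$. \Cref{Henson-alpha} is the natural tool if one prefers to first convert the discrete implication of $(3)$ into an explicit continuous modulus before estimating; otherwise the direct bound above suffices, and the whole argument relativizes routinely to parameters.
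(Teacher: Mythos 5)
Your proof is correct, and it traces a genuinely different cycle than the paper. The paper proves $(1)\Rightarrow(3)\Rightarrow(2)\Rightarrow(1)$ and $(1)\Leftrightarrow(4)$, with the real work concentrated in $(2)\Rightarrow(1)$: there it invokes \Cref{Henson-alpha} to manufacture an explicit continuous modulus $\alpha$ relating $P$ to $\partial(\cdot,D)$, and then hand-builds a formula
$H(\bar x)=\inf_{\bar y}\min(\alpha(P(\bar x))+\partial(\bar x,\bar y),1)$
that computes the distance to $D$; the quantification clause $(1)\Rightarrow(4)$ is proved independently with a similar device. You instead run the single cycle $(1)\Rightarrow(2)\Rightarrow(3)\Rightarrow(4)\Rightarrow(1)$, putting the weight on $(3)\Rightarrow(4)$ via a penalty-function construction $Q_m(\bar z)=\inf_{\bar x}(\psi(\bar z,\bar x)+M_m|\phi_m(\bar x)|)$ with a calibration $M_m\delta_m\geq 2C$. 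This neatly sidesteps having to first establish definability of the distance predicate: you recover $(1)$ afterward by specializing $\psi=\partial$ in $(4)$. Your route avoids \Cref{Henson-alpha} entirely (you offer it only as an alternative), and the calibration argument is transparent; the trade-off is that the paper's chain isolates definability of the distance as the central fact and obtains the quantification clause as a consequence, whereas yours makes quantification primary and the distance predicate a corollary. Your $(2)\Rightarrow(3)$ step (normalize to $|\phi_n|$, truncate by $\dot{-}\,q_n$ with rational $q_n\downarrow 0$, then re-index so $\sup|\phi_m-|P||<\eta_m/2$) is also correct, where the paper instead routes $(2)\Rightarrow(1)\Rightarrow(3)$. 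All the small points check out: replacing $P$ by $|P|$ preserves the $\delta$-$\epsilon$ hypothesis since $P\leq|P|$; the modulus $\omega$ for $\psi$ in $\bar x$ can be taken increasing and uniform in $\bar z$ since the product metric is a $\max$; $M_m$ rational keeps $Q_m$ a genuine $\tau$-formula; and the $\sup$ case is symmetric.
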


\begin{proof} $(1\Rightarrow 3)$ As $F(\bar x)=\partial(\bar x,D)$ is a definable predicate, there are $\tau$-formulas $\{\psi_m\mid m\geq 1\}$ converging uniformly to $P$. We may assume that 
$$|F^\tA(\bar x)-\psi_m^\tA(\bar x)|\leq \frac{1}{3}\cdot 2^{-m}$$
for $\bar x\in A^n$. Note that if $\bar x\in D$, then $F^\tA(\bar x)=0$ and thus $\psi_m^\tA(\bar x)\leq \frac{1}{3}\cdot 2^{-m}$. Also, if $\psi_m^\tA(\bar x)\leq \frac{2}{3}\cdot 2^{-m}$, then
$$F^\tA(\bar x)\leq \psi_m^\tA+ |P^\tA(\bar x)-\psi_m^\tA(\bar x)| \leq \frac{2}{3}\cdot 2^{-m} +  \frac{1}{3}\cdot 2^{-m} = 2^{-m}.$$
Thus, the formulas $\phi_m(\bar x) = \psi_m(\bar x)\ \dot-\ \frac{2^{-m}}{3}$ satisfy (3) with $\delta_m= \frac{2}{3}\cdot 2^{-m}$.\\\par

$(3\Rightarrow 2)$ Let $P(\bar x)=\sum_{m=1}^\infty 2^{-m}\phi_m(\bar x)$, which is the uniform limit of the partial sums.\\\par

$(2\Rightarrow 1)$ Without loss of generality, both  $P(\bar x)$ and $\partial(\bar x,D)$ have range $[0,1]$. We use \Cref{Henson-alpha} with $F(\bar x)=P^\tA(\bar x)$ and $G(\bar x)=\partial^\tA(\bar x,D)$. Thus, we get a continuous, increasing function $\alpha:[0,1]\to[0,1]$ and $\partial^\tA(\bar x,D)\leq \alpha(P^\tA(\bar x))$.\par
Write
$$H(\bar x) = \inf_y \min(\alpha(P(\bar x))+\partial(\bar x,\bar y), 1)$$
and observe that $H(\bar x)$ is a definable predicate. Indeed, if $\{\phi_m\}_{m\in\omega}$ is a sequence converging uniformly to $P(\bar x)$, then the formulas
$$\psi_m(\bar x) = \inf_y \min(\alpha(\phi_m(\bar x))+\partial(\bar x,\bar y), 1)$$
converge uniformly to $H(\bar x)$. We now claim that $H^\tA(\bar x)=\partial^\tA(\bar x, D)$ for all $\bar x\in A^n$.\par
First, if $\bar y\in D$, then $P^\tA(\bar y)=0$ and
$$H^\tA(\bar x)\leq \min(\alpha(0)+\partial(\bar x,\bar y),1) = d^\tA(\bar x,\bar y)$$
and so $H^\tA(\bar x)\leq d^\tA(\bar x, D)$. Conversely, recall that $\partial^\tA(\bar x,D)\leq \alpha(P^\tA(\bar x))$, so we have
\begin{align*}
H(\bar x) &\geq  \inf_y \min(\partial(\bar x,D)+\partial(\bar x,\bar y), 1)\\
&\geq \min(d(\bar x,D),1)\\
&= \partial(\bar x,D)
\end{align*}
by our assumption on the range of the metric.\\\par

$(1\Rightarrow 4)$ Without loss of generality, the distances $\partial$ and $\rho$ have range in $[0,1]$ (i.e. the bound is 1). Fist, note that $\psi(\bar x,\bar y)$ obeys some modulus of continuity 
$$\Delta_\psi:[0,\infty)^2\to [0,\infty)$$
so that for every $\bar z\in A^k$ and every $\bar x,\bar y\in A^n$ we have
$$|\psi(\bar z,\bar x)-\psi(\bar z, \bar y)|\leq \Delta_\psi(\rho(\bar z,\bar z), \partial(\bar x,\bar y))=\Delta_\psi(0, \partial(\bar x,\bar y)).$$
Moreover, $\Delta_\psi(0,r)$ is an increasing function with range $[0,\Delta_\psi(0,1)]$. Therefore, $\Delta_\psi(0,r)$ is a uniformly continuous function, so it is the uniform limit of the basic connective. Thus,
$$ Q_d(\bar z) = \inf_{\bar x} \psi(\bar z,\bar x) + \Delta_\psi(0, \partial(\bar x, D))$$
is a definable predicate. We claim that this is the desired predicate, meaning that $Q_d=Q$.\par
To see this, recall that
$$\psi(\bar z,\bar y)\leq \psi(\bar z, \bar x) + \Delta_\psi(0, \partial(\bar x,\bar y))$$
taking the infimum over all $\bar y\in D$ on both sides, and using the fact that $\Delta_\psi(0,r)$ is continous and increasing, we get
$$Q(\bar z) \leq \psi(\bar z, \bar x) + \Delta_\psi(0, \partial(\bar x,D))$$
for all $\bar z\in A^k$ and $\bar x\in A^n$. Thus, we can take the infimum over all $\bar x\in A^n$ on the right-hand side and get that $Q\leq Q_d$. The other direction is clear since $Q_d$ is an infimum over a strictly larger collection.\\\par

$(4\Rightarrow 1)$ $\partial^\tA(\bar x, D) = \inf_{\bar a\in D} \partial^\tA(\bar x,\bar a)$. 
\end{proof}

Note the alternation of strict and non-strict inequalities in the second condition. Finally, we record the following characterization of definability communicated to us by Ita{\"i} Ben Yaacov:

\begin{lemma}
Let $P:A^n\to I$ be a definable predicate with $\min I=0$. Then $P$ defines the $\partial$-distance to a closed, non-empty, set $D\subset A^n$ (its zero-set) if and only if the following two conditions hold in $\tA$ (meaning that they evaluate to 0):
$$\sup_{\bar x,\bar y} \left(\left(P(\bx)\ \dot-\ P(\bar y)\right)\ \dot-\ \partial(\bx,\bar y)\right)$$
$$\sup_{\bx} \inf_{\bar y} \left(P(\bar y) + (\partial(\bx,\bar y)\ \dot-\ P(\bar x))\right).$$
The same statement is true when $P(\bar x)$ has parameters from $\tA$. 
\end{lemma}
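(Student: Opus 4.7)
The plan is first to unpack the two conditions semantically. The first, $\sup_{\bar x,\bar y}((P(\bx) \dot- P(\bar y)) \dot- \partial(\bx,\bar y)) = 0$, says $P(\bx) - P(\bar y) \leq \partial(\bx,\bar y)$ for all $\bx,\bar y$, so (by symmetry) $P$ is $1$-Lipschitz with respect to $\partial$. The second, $\sup_{\bx}\inf_{\bar y}(P(\bar y) + (\partial(\bx,\bar y) \dot- P(\bx))) = 0$, says that for every $\bx$ and every $\epsilon > 0$ there exists $\bar y$ with $P(\bar y) < \epsilon$ and $\partial(\bx,\bar y) < P(\bx) + \epsilon$.

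For the forward direction, if $P(\bx) = \partial(\bx, D)$ for the (automatically closed and, by hypothesis, nonempty) zero-set $D$, then the first condition is just the reverse triangle inequality for the distance to a set, and the second is witnessed by picking $\bar y \in D$ with $\partial(\bx,\bar y) < P(\bx) + \epsilon$, which gives $P(\bar y) = 0$ and $(\partial(\bx,\bar y) \dot- P(\bx)) < \epsilon$.

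For the backward direction, let $D := \{\bx \in A^n : P(\bx) = 0\}$, which is closed because $P$ is continuous. The core of the proof is an iterative Cauchy-sequence construction. Fix positive reals $(\delta_n)_{n\geq 1}$ with $\sum_n \delta_n$ as small as desired. Starting from any $\bx_0$, apply the second condition inductively to produce $\bx_{n+1}$ with $P(\bx_{n+1}) < \delta_{n+1}$ and $\partial(\bx_n,\bx_{n+1}) < P(\bx_n) + \delta_{n+1}$. For $n \geq 1$ this gives $\partial(\bx_n,\bx_{n+1}) < \delta_n + \delta_{n+1}$, so the sequence is $\partial$-Cauchy; completeness of $(A^n,\partial)$ yields a limit $\bx^*$, and continuity of $P$ together with $P(\bx_n) < \delta_n \to 0$ forces $\bx^* \in D$. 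This proves $D \neq \emptyset$. Repeating the construction from an arbitrary $\bx$ with $\sum_n \delta_n < \epsilon$ produces $\bx^* \in D$ with $\partial(\bx,\bx^*) \leq P(\bx) + 2\epsilon$, so $\partial(\bx,D) \leq P(\bx)$; the reverse inequality $P(\bx) \leq \partial(\bx,D)$ is immediate from Lipschitzness applied to any $\bar y \in D$.

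The main obstacle is this iterative construction: the second condition only yields approximate witnesses, so $(\delta_n)$ must be tuned so that the resulting sequence is simultaneously Cauchy, convergent into $D$, and close enough to the basepoint to give $\partial(\bx,\bx^*) \leq P(\bx) + O(\epsilon)$. Once the summable-parameter trick is set up, everything else is routine triangle-inequality bookkeeping, and the parameter version requires no change to the argument.
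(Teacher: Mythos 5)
Your proposal is correct and follows essentially the same route as the paper: unpack the first condition as $1$-Lipschitzness of $P$ with respect to $\partial$ and the second as the existence of approximate witnesses, then in the converse direction iterate the second condition with summable error parameters to build a $\partial$-Cauchy sequence whose limit lies in the zero-set, giving both nonemptiness of $D$ and $\partial(\bar x,D)\leq P(\bar x)$, with the reverse inequality from Lipschitzness. The only differences are cosmetic (your $\delta_n$ versus the paper's $\epsilon/2^n$, and slightly different constants in the final triangle-inequality bound, which are immaterial since $\epsilon$ is arbitrary).
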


\begin{proof}
The first conditions says that
$$\forall x\forall y\ P(\bar x)\leq P(\bar y)+\partial(\bar x,\bar y)$$
meaning that $P$ is 1-Lipschitz with respect to $\partial$. The second condition attempts to express that
$$\forall x\exists y\ P(\bar y)=0 \land \partial(\bar y,\bar x)\leq P(\bar x)$$
but can only do so approximately: it only requires that for every $\bar x\in A^n$ and every $\epsilon>0$ there is a $\bar y\in A^n$ such that $P(\bar y)<\epsilon$ and $\partial(\bar x,\bar y)\leq P(\bar y)+\epsilon$. Nonetheless, if $P(\bar x)=\partial(\bar x,D)$ for some nonempty set $D\subset A^n$, then both conditions are satisfied.\par
For the converse, suppose $P$ satisfies both conditions and let $D$ be its zero-set. Fix some $\bar a\in A^n$ and some $\epsilon>0$; we use the second condition inductively to define a sequence $\{\bar a_n\}_{n\in\omega}\subset A^n$ such that $\bar a_0=\bar a$, $P(\bar a_{n+1})<\epsilon/2^n$ and $\partial(\bar a_{n+1},\bar a_n)\leq P(\bar a_n)+\epsilon/2^n$.\par
Note that by construction $\partial(\bar a_{n+1},\bar a_n)\leq (3\epsilon)/2^n$ for $n\geq 1$, and so the sequence is Cauchy and has a limit $\bar b\in A^n$. By construction, $P(\bar b)=\lim P(\bar a_n)=0$, so $\bar b\in D$ and $D$ is nonempty. Moreover,
$$\partial(\bar a, D)\leq \partial(\bar a,\bar b)\leq P(\bar a)+\partial(\bar a,\bar a_1)+\partial(\bar a_1, \bar b) \leq P(\bar a)+4\epsilon.$$
Thus, we have that $\partial(\bar x,D)\leq P(\bar x)$, and the converse inequality follows from the first condition in the lemma. This complete the proof.
\end{proof}

\subsection{Types}\label{section-types}
We begin by summarizing the basic definitions and facts about types in infinitary continuous logic:

\begin{definition}
Let $\Omega$ be a weak modulus of continuity, $\tA$ a metric $\tau$-structure, and $\bar a\in \tA$. Then
$$(\Omega,\mathcal{L})-\operatorname{tp}(\bar a)= \{(\phi,r) \mid \text{$\phi$ satisfies $\Omega$ and } \phi^\tA(\bar a)\leq r\}$$
Note that, up to logical equivalence, $(\Omega,\mathcal{L})-\operatorname{tp}(\bar a)$ is equal to $\{\phi\mid \phi^\tA(\bar a)=0\}$ after doing some routine modifications to the formulas. However, this modifications might increase the number of quantifiers for formulas of low complexity, as we only included multiplication by rationals, and not reals, in our basic connectives. Therefore, we prefer the definition above to maintain uniformity with the definitions below.

We can, however, also consider the set of definable predicates realized by $\bar a$, which also proves to be quite useful:
$$(\Omega,\bar{\mathcal{L}})-\optp(\bar a)=\{P \mid \text{$P$ is a weakly $\Omega$ definable predicate and  $P^\tA(\bar a)=0$}\}.$$
Moreover, we can further restrict our types to only consider formulas with a particular complexity. In future sections, we will be particularly interested in $\sup^\alpha$ types, so we will give this definition explicitly, but other complexity classes may be used. Thus, 
\begin{definition}
Let $\Omega$ be a weak modulus of continuity, $\tA$ a metric $\tau$-structure, $\bar a\in \tA$, and $\alpha>0$ a countable ordinal. Then,
$$(\Omega,\sup\phantom{}^\alpha)-\optp(\bar a)=\{(\phi,r) \mid \text{$\phi$ is an $\Omega$-$\sup\phantom{}^\alpha$ formula and } \phi^\tA(\bar a)\leq r\}$$
$$(\Omega,\overline{\sup\phantom{}^\alpha})-\optp(\bar a)=\{P \mid \text{$P$ is a weakly $\Omega$-$\sup\phantom{}^\alpha$ def. predicate and $P^\tA(\bar a)=0$}\}.$$
\end{definition}
We may also refer to types abstractly as sets of formulas or predicates. We adopt the convention that letters like $p$ and $q$ refer to complete types, and $\Phi$ and $\Psi$ are used for partial types. In particular, we might consider a partial $(\Omega, \overline{\sup^\alpha})$-type $\Psi(\bar x)$.\par

Although we make no notational difference between types consisting of formulas and types consisting of predicates, we do define the following two operations:
$$\overline{\Psi(\bar x)} = \{ P\mid \text{$P$ definable predicate and the uniform limit of formulas in }\Psi(\bar x)\}$$
$$\interior{\Psi(\bar x)}= \{ (\phi, r) \mid \text{$\phi-r$ is a definable predicate in } \Psi(\bar x)\}.$$
\end{definition}

Complete types are well understood in the literature, and a more detailed treatment can be found in \cite{benyaacov2017} or \cite{hallback}. By a theory $T$, we mean a set of pairs $(\phi,r)$, where $\phi$ is a sentence and $r\in\mathbb{R}$, and we say that $\tA\models T$ if $\phi^{\tA}=r$ for all $(\phi,r)\in T$. Given a countable fragment $\mathcal{F}\subset L_{\omega_1,\omega}^{\mathbb{R}}(\tau)$ and an $\mathcal{F}$-theory $T$, the space of finitely consistent (or approximately realizable) types $\widehat{S}_{\bar x}(T)$
is a topometric space. That is, it is equipped with both a topology (the logic topology) and a metric; moreover, the metric topology refines the logic topology and the metric is lower-semi-continuous with respect to the topology. We denote the logic topology by $\ell$ and the basic logic open sets are of the form 
$$\llbracket\phi<r\rrbracket = \{p\in \widehat{S}_{\bar x}(T)\mid \phi^p<r\}$$
and the distance is given by 
$$\partial(p,q)\leq s \Longleftrightarrow \forall\phi\in\mathcal{F}\ \left[\inf_y \max\left( d(\bar x,\bar y)\ \dot-\ s, |\phi(\bar y) - \phi^p|\right)\right]=0.$$
It is not immediately clear that $\partial$ defines a distance, and \cite{hallback} gives a nicer definition based on the density of realizable types in $\widehat{S}_{\bar x}(T)$. Relevant for this paper is the omitting types theorem originally due to Eagle and restated as it appears in \cite{hallback}:

\begin{theorem}[\cite{eagle}] Let $\mathcal{F}$ be a countable fragment and let $T$ be an $\mathcal{F}$-theory. If $X_n$ is an $\ell$-meager and $\partial$-open set of realizable types for every $n$, then, there is a separable model $\tA\models T$ that omits all of the $X_n$.
\end{theorem}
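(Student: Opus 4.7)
The plan is to carry out a Henkin-style construction in the expansion $\tau^{*} = \tau \cup C$, where $C = \{c_{0}, c_{1}, \dotsc\}$ is a countably infinite set of new constant symbols. Working in the compact topometric space $\widehat{S}(T^{*})$ of complete $\mathcal{F}^{*}$-types over $T$, I would build a decreasing chain $U_{0} \supseteq U_{1} \supseteq \dotsb$ of nonempty $\ell$-open sets that enforce three families of requirements simultaneously: (i) completeness within $2^{-k}$ on each sentence of $\mathcal{F}^{*}$; (ii) Henkin witnesses, so that whenever the condition $\inf_{x}\psi(\bar{c},x) < q$ is consistent with $U_{k}$, the next set $U_{k+1}$ is refined to lie inside $\llbracket \psi(\bar{c}, c_{i}) < q \rrbracket$ for some unused $c_{i} \in C$; and (iii) for each pair $(\bar{c}, n)$, avoidance of $X_{n}$ by the $\tau$-type of $\bar{c}$. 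Taking any $p^{*} \in \bigcap_{k} U_{k}$ (nonempty by $\ell$-compactness) then yields a pre-structure on $C$ whose metric completion is a separable $\tA \models T$.

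Families (i) and (ii) are handled by routine shrinking of logic-open sets, so the main obstacle is the omitting step (iii). For a tuple $\bar{c}$ from $C$, let $\pi_{\bar{c}} \colon \widehat{S}(T^{*}) \to \widehat{S}_{|\bar{c}|}(T)$ be the projection onto the $\tau$-type of $\bar{c}$. This map is both $\ell$-continuous and $\ell$-open: any basic logic-open set $\llbracket \phi(\bar{c}, \bar{d}) < r \rrbracket$ has image $\llbracket \inf_{\bar{y}} \phi(\bar{x}, \bar{y}) < r \rrbracket$. Writing $X_{n} = \bigcup_{m} F_{m}^{n}$ with each $F_{m}^{n}$ closed and $\ell$-nowhere-dense, the openness of $\pi_{\bar{c}}$ forces $\pi_{\bar{c}}^{-1}(F_{m}^{n})$ to be closed with empty interior in $\widehat{S}(T^{*})$. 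From any nonempty $\ell$-open $U_{k}$ I can therefore pass to a nonempty logic-open subset disjoint from $\pi_{\bar{c}}^{-1}(F_{m}^{n})$; interleaving over $m$ and over all pairs $(\bar{c}, n)$ handles the entire omitting family.

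The role of the $\partial$-openness of $X_{n}$, meanwhile, is to ensure that the avoidance survives the passage to the metric completion. Every element of $\tA$ is a $d$-limit of constants from $C$, so the type of any tuple from $\tA$ is a $\partial$-limit of the corresponding tuple-types of constants; since each $X_{n}$ is $\partial$-open, its complement is $\partial$-closed and hence closed under $\partial$-limits, so avoidance for the constants automatically extends to all tuples of $\tA$. Combined with the completeness and Henkin conditions from (i) and (ii) on $C$, this gives $\tA \models T$, separability from the $d$-density of $C$ in $\tA$, and omitting of every $X_{n}$.
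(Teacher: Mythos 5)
First, a point of reference: the paper does not prove this statement at all --- it is quoted from Eagle (as restated by Hallb\"ack), and the paper's own omitting machinery in Section~3 takes a different route, via consistency properties anchored in a fixed model $\tA$ rather than via Baire category in a type space. Your proposal is the standard Eagle-style argument, and two of its three pillars are sound: the projection $\pi_{\bar c}$ onto the $\tau$-type of a tuple of constants is indeed $\ell$-continuous and $\ell$-open on (the closure of) the realizable types, so preimages of nowhere dense sets are nowhere dense and the omitting requirement can be met on tuples of constants; and the passage to the completion is exactly where $\partial$-openness of $X_n$ is used, since $\partial(\operatorname{tp}(\bar c_k),\operatorname{tp}(\bar a))\le d(\bar c_k,\bar a)$ makes the complement of $X_n$ closed under the relevant limits.

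The genuine gap sits inside what you call ``routine shrinking.'' (1) A nested sequence of nonempty $\ell$-open sets in a compact Hausdorff space can have empty intersection, so ``nonempty by $\ell$-compactness'' does not produce $p^*$ as stated; you must either arrange $\overline{U_{k+1}}\subseteq U_k$ at each step (possible by regularity), or --- better, and closer to Eagle --- phrase each requirement as a dense open subset of the closure of the realizable types and invoke the Baire property. (2) More seriously, your requirements (i)--(ii) only witness the quantifier $\inf_x$. For an \emph{infinitary} fragment you must also, whenever $U_k\subseteq\llbracket\inf_n\psi_n(\bar c)<q\rrbracket$, refine into $\llbracket\psi_{n_0}(\bar c)<q\rrbracket$ for some $n_0$ (the type-space analogue of condition (C8) in \Cref{conprop}), and dually ensure $\sup_n$ is respected. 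Without this, the $p^*$ you extract is merely finitely consistent: it may assign $\inf_n\psi_n$ a value strictly below $\inf_n\psi_n^{p^*}$, and then the term model on $C/{\sim}$ fails to satisfy $T$ whenever $T$ is genuinely infinitary. The nonemptiness of each such refinement is precisely where the density of realizable types must be used --- this is the real content of the theorem and the reason compactness alone cannot replace it. Two smaller points: $\mathcal F^*$ should consist of $\mathcal F$-formulas with variables replaced by constants, not the fragment generated over $\tau\cup C$, since adding constants to infinitary continuous logic creates genuinely new formulas (the footnote to \Cref{type-omitting}); and a meager set need not \emph{equal} a countable union of closed nowhere dense sets, only be contained in one, though this is harmless for your argument.
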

In the next section, we will explore when partial types can be omitted in a separable model. \cite{farah_omitting_2017} have shown that this problem is essentially intractable, so we will focus on a specific family of types where partial results can be obtained.

\section{Model Existence \& Type Omitting Theorems}
In this section, we prove a continuous analogue of the Model Existence Theorem due to Makkai (see \cite{marker_lectures_2016} for a reference), which is a Henkin-style construction that works in the absence of a notion of formal proofs. First a few definitions:

\begin{definition}
Let $\tau$ be a continuous vocabulary, we define the language $L_{\infty,\omega}^\mathbb{R}(\tau)$ in a similar way as in \Cref{the-language}. The only change is in condition (7), which now reads:
\begin{enumerate}
\item[$(7)_\infty$] If $X$ is set of $\tau$-formulas all of whose free variables are contained in a finite set of variables $\overline x$, and there is a modulus $\Delta$ and a compact set $I\subset \mathbb{R}$ such that every $\phi\in X$ respects both $\Delta$ and $I$. Then $\sup_{\phi\in X} \phi$ and $\inf_{\phi\in X} \phi$ are $\tau$-formulas with free variables $\overline x$ respecting $\Delta$ and $I$.
\end{enumerate}
\end{definition}

\begin{remark}
Below, $\tau$ is a countable continuous vocabulary with a distinguished infinite set of constant symbols $C$. Take $L=L_{\infty,\omega}^\mathbb{R}(\tau)$. For every sentence $\phi\in L$, we assume that $I_\phi$ is a closed interval, and that $I_d=[0,1]$. In the definition below, $(\phi,r)\in \sigma$ is supposed to be interpreted as ``it is consistent that $\phi<r$''.
\end{remark}

\begin{definition}\label{conprop}
A consistency property $\Sigma$ is a collection of countable sets $\sigma\subset \bigcup_{\phi\in Sen(L)} \{\phi\}\times ((I_\phi\setminus\{\min I_\phi\})\cap\mathbb{Q})$ such that for every $\sigma\in\Sigma$:
\begin{enumerate}[label={(C\arabic*)}]
\item If $\mu\subset\sigma$, then $\mu\in\Sigma$;
\item If $(\phi,r)\in\sigma$, then there is $s<r$ and $\mu\in\Sigma$ such that $\sigma\cup\{(\phi,s)\}\subset\mu$;
\item If $(\phi,r),(-\phi, s)\in \sigma$, then $-s<r$;
\item If $(-\phi,r)\in \sigma$, then there is $\mu\in\Sigma$ such that $\sigma\cup\{(\sim\phi, r)\}\subset\mu$;

\item If $(\phi\ +\ \psi,r)\in\sigma$, then there is $\mu\in\Sigma$ and $s\in\bbQ$ such that $\sigma\cup\{(\phi,s),(\psi,r-s)\}\subset\mu$;

\item If $(q\phi,r)\in\sigma$ for some $q\in\bbQ$, then 
\begin{enumerate}[label={(C6\alph*)}]
\item If $q>0$, there is $\mu\in\Sigma$ such that $\sigma\cup\{(\phi,r/q)\}\subset\mu$;
\item If $q<0$, there is $\mu\in\Sigma$ such that $\sigma\cup\{(-\phi,r/|q|)\}\subset\mu$;
\item If $q=0$, then $r>0$ and for all  $s\in\bbQ^+$  there is $\mu\in\Sigma$ such that $\sigma\cup\{(0\phi,s)\}\subset\mu$;
\end{enumerate}

\item If $(\sup_n\phi_n,r)\in\sigma$, then for all $n<\omega$ there is $\mu\in \Sigma$ such that $\sigma\cup\{(\phi_n,r)\}\subset\mu$;
\item If $(\inf_n\phi_n,r)\in\sigma$, then there is $\mu\in \Sigma$ and an $n<\omega$ such that $\sigma\cup\{(\phi_n,r)\}\subset\mu$;
\item If $(\sup_{\overline x}\phi(\overline x),r)\in\sigma$, then for all $\overline c\in C^{|\overline x|}$ there is $\mu\in \Sigma$ such that $\sigma\cup\{(\phi(\overline c),r)\}\subset\mu$;
\item If $(\inf_{\overline x}\phi(\overline x),r)\in\sigma$, then there is $\mu\in \Sigma$ and $\overline c\in C^{|\overline x|}$ such that $\sigma\cup\{(\phi(\overline c),r)\}\subset\mu$;
\item Let $t_1,\dotsc,t_n$ be terms with no variables and $a_1,\dotsc, a_n\in C$. If 
$$(d(a_1,t_1),r_1),\dotsc, (d(a_n,t_n),r_n), (\phi(t_1,\dotsc,t_n),s) \in \sigma,$$
then there is $\mu\in \Sigma$ and a rational number $q\leq\Delta_\phi(r_1,\dotsc, r_n)+s$ such that $\sigma\cup\{(\phi(a_1\dotsc, a_n),q)\}\subset \mu;$

\item Let $t$ be a term with no variable and $a,b\in C$, then
	\begin{enumerate}[label={(C12\alph*)}]
		\item If $(d(a,b),r)\in \sigma$, then there is  $\mu\in \Sigma$ such that $\sigma\cup\{(d(b,a),r)\}\subset \mu$;
    		\item For all rational $r>0$, here is $\mu\in\Sigma$ and $c\in C$ such that $\sigma\cup\{(d(c,t),r)\}\subset\mu$.
    	\end{enumerate}
\end{enumerate}
\end{definition}

\begin{lemma}\label{prop-of-con}
Let $\Sigma$ be a consistency property with $\sigma\in\Sigma$ and $a,b,c\in C$:
\begin{enumerate}
\item For any rational $r>0$, there is $\mu\in \Sigma$ and a positive rational $q\leq r$ with $\sigma\cup\{(d(a,a),q)\}\subset\mu$;
\item If $(d(a,b),r),(d(b,c),s)\in\sigma$ then there is $\mu\in\Sigma$ and a positive rational $q\leq r+s$ with $\sigma\cup\{(d(a,c),q)\}\subset\mu$.
\end{enumerate}
\end{lemma}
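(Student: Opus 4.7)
The plan is to derive both parts from the substitution axiom (C11) --- the only axiom that can produce genuinely new distance pairs from existing ones --- combined with (C12a) for symmetry and (C12b) for generating constant witnesses, chaining successive $\Sigma$-extensions using (C1). In each case the key observation is that for $\phi(x) = d(a,x)$, viewed as a one-variable formula, the modulus is $\Delta_\phi(r_1) = \Delta_d(0,r_1) = r_1$, so (C11) reduces the distance budget additively in exactly the way we want.

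For (1), fix a positive rational $\epsilon \leq r/2$. First apply (C12b) to the ground term $t = a$ with bound $\epsilon$: this gives some $c \in C$ and $\mu_1 \in \Sigma$ with $\sigma \cup \{(d(c,a),\epsilon)\} \subset \mu_1$. Next apply (C12a) to $\mu_1$ to obtain $\mu_2 \in \Sigma$ containing also $(d(a,c),\epsilon)$. Now invoke (C11) on $\mu_2$ with the formula $\phi(x) = d(a,x)$, taking $n = 1$, $t_1 = c$, $a_1 = a$: the pair $(d(a,c),\epsilon) \in \mu_2$ supplies the ``$(d(a_1,t_1),r_1)$'' premise (with $r_1 = \epsilon$), and the same pair, read as $(\phi(c),\epsilon)$, supplies the ``$(\phi(t_1),s)$'' premise. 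Axiom (C11) then produces $\mu \in \Sigma$ and a rational $q \leq \Delta_\phi(\epsilon) + \epsilon = 2\epsilon \leq r$ with $(d(a,a),q) \in \mu$, as required.

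For (2), start from $(d(a,b),r), (d(b,c),s) \in \sigma$ and apply (C12a) to pass to $\mu_1 \in \Sigma$ containing also $(d(c,b),s)$. Then apply (C11) to $\mu_1$ with the same formula $\phi(x) = d(a,x)$, $t_1 = b$, $a_1 = c$: the premise $(d(c,b),s) \in \mu_1$ plays the role of $(d(a_1,t_1),r_1)$, and the premise $(\phi(b),r) = (d(a,b),r) \in \mu_1$ plays the role of $(\phi(t_1),s')$. Since $\Delta_\phi(s) = s$, (C11) yields $\mu \in \Sigma$ and a positive rational $q \leq s + r$ with $(d(a,c),q) \in \mu$.

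The main subtlety --- not a deep obstacle but the one thing worth checking carefully --- is that (C11) requires all of its premises to lie in a \emph{single} element of $\Sigma$, so the preparatory symmetry and witness extensions must be chained first into a common $\mu_1$ before (C11) is invoked. Positivity of the output rational $q$ is automatic from the convention that pairs $(\phi,r) \in \sigma$ satisfy $r \in (I_\phi \setminus \{\min I_\phi\}) \cap \mathbb{Q}$, and the appearance of the constant $a$ inside the formula $\phi(x) = d(a,x)$ is harmless because constants are nullary function symbols and (C11) imposes no restriction on ground subterms of $\phi$ beyond the ones being substituted.
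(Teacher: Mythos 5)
Your proof is correct and follows essentially the same route as the paper's: compute that $d$ with one argument frozen at a constant has identity modulus, set up the premises via (C12b)/(C12a), and close with (C11). The only cosmetic differences are that in (1) you reuse the single pair $(d(a,c),\epsilon)$ for both premises of (C11) where the paper first generates both orientations, and in (2) you insert a symmetry step that the paper avoids by choosing $\phi(x)=d(x,c)$ instead of $d(a,x)$; both variants yield the same bounds.
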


\begin{proof}
We begin the proof by noting that $\Delta_{d(x,c)}=Id$ for any $c\in C$. Indeed, recall that $\Delta_d(r_1,r_2)=r_1+r_2$, and $c$ is a function of arity zero, so it has a modulus of arity zero $\Delta_c=0$. Hence, $\Delta_{d(x,c)} = \Delta_d \circ (\Delta_x, \Delta_c)$ has arity 1 and $\Delta_{d(x,c)}(r) = r$. With this in mind:
\begin{enumerate}
\item Fix $r>0$, note that $t=\textrm{`a'}$ is a term in language, so by (C12b) there is $\mu\in\Sigma$ and $c\in C$ such that $\sigma\cup\{(d(c,a),r/2)\}\subset\mu$. Then (C12a) guarantees a further extension that contains $\sigma\cup\{(d(c,a),r/2),(d(a,c),r/2)\}$. Finally, we apply (C11), and the fact that $\Delta_{d(x,a)}=Id$, to find a positive rational $q\leq r/2+r/2=r$ and an extension in $\Sigma$ that contains $\sigma\cup\{(d(a,a),q)\}$.
\item Fix $r,s>0$, then apply (C11) to $t=\textrm{`b'}$ and $\phi(x)=d(x,c)$.
\end{enumerate}
\end{proof}

\subsection{The Model Existence Theorem}
\begin{theorem}\label{model-construction}
If $\Sigma$ is a consistency property and $\sigma\in\Sigma$, then there is a separable $\tau$-structure $\tA$ such that $\phi^{\tA}<r $ for every $(\phi,r)\in\sigma$.
\end{theorem}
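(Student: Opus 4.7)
My plan is to carry out a Makkai-style Henkin construction adapted to the continuous real-valued setting. The proof breaks into three stages: first, extend $\sigma$ to a countable ``rich'' set $\sigma^\infty \supset \sigma$ that satisfies each clause of \Cref{conprop} \emph{directly} (without needing to pass to a further extension $\mu$); second, build a separable metric $\tau$-structure $\tA$ as the metric completion of a term model extracted from $\sigma^\infty$; third, induct on formula complexity to verify $\phi^{\tA} < r$ for every $(\phi, r) \in \sigma^\infty$.

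For the first stage, I would enumerate the countable list of obligations generated by applying (C2)--(C12) to the pairs that will appear in the chain, together with the countably many relevant constants, terms, and indices. A standard bookkeeping argument produces an increasing chain $\sigma = \sigma_0 \subset \sigma_1 \subset \cdots$ in $\Sigma$, where each $\sigma_{n+1}$ is obtained from $\sigma_n$ by discharging one obligation; here (C1) is used to trim any overshoot from the witnessing $\mu \in \Sigma$ so that each $\sigma_n$ stays countable. Setting $\sigma^\infty := \bigcup_n \sigma_n$, every clause holds directly in $\sigma^\infty$.

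For the second stage, I define $d^\ast(a,b) := \inf\{r \in \bbQ^+ \mid (d(a,b), r) \in \sigma^\infty\}$ on the set $C$ of distinguished constants. Using (C11), (C12a), and \Cref{prop-of-con}, this is a bounded pseudometric. I quotient by the zero-distance relation to obtain a metric space $M_0$, and let $A$ be its metric completion. By (C12b), every closed term is equivalent to some $c \in C$, so function symbols are first interpreted on $C$ (consistently, by (C11)) and then extended by uniform continuity to $A$, with the required modulus provided by the interpretation of $\Delta_f$. Each predicate $P$ is interpreted by $P^{\tA}(\bar c) := \inf\{s \mid (P(\bar c), s) \in \sigma^\infty\}$ on tuples from $C$ (well-defined and bounded in $I_P$ thanks to (C3)) and extended to $A$ by uniform continuity, again via (C11).

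For the third stage, I induct on the complexity of $\phi$. Atomic cases follow from the definition of $P^{\tA}$; the connective cases are handled one by one using (C3)--(C6); quantifier cases use (C9)--(C10) together with density of $C$ in $A$; the countable $\sup_n$ and $\inf_n$ cases use (C7)--(C8). The principal obstacle I anticipate is the strict-inequality aspect of the $\sup_x$ and $\sup_n$ cases: (C9) only gives $\phi(c)^{\tA} < r$ for each individual $c \in C$, so passing to a supremum over a dense subset could in principle yield equality rather than strict inequality. This is precisely where (C2) becomes essential: by strengthening $(\phi, r) \in \sigma^\infty$ to some $(\phi, s) \in \sigma^\infty$ with $s < r$, we obtain $\sup_{c \in C} \phi(c)^{\tA} \leq s$, and then density together with continuity lifts this to $\sup_{a \in A} \phi(a)^{\tA} \leq s < r$. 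The same strengthening trick resolves the analogous issue for countable suprema, provided all the $\phi_n$ respect a common modulus (as built into clause (7) of \Cref{the-language}). Separability is automatic since $C$ is countable and dense in $A$.
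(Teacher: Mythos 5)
Your overall architecture (Henkin chain, term model on the constants, quotient and metric completion, induction on formula complexity) matches the paper's, and your treatment of the strict inequality in the $\sup$ cases via (C2) is exactly right. But there is a genuine gap in stage one that propagates into stage three. Your chain only \emph{discharges the closure obligations} (C2)--(C12) attached to pairs already recorded. The paper's construction imposes a strictly stronger requirement, its condition (P1): every sentence of a suitable countable fragment is enumerated infinitely often, and each time it is visited one adds $(\phi_i,r)$ with $r$ within $2^{-(i+1)}$ of $\inf\{s\mid \sigma_i\cup\{(\phi_i,s)\}$ extends in $\Sigma\}$ --- i.e., the recorded bound is driven all the way down to the least consistently addable value, whether or not $\phi_i$ was previously mentioned. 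Merely iterating (C2) gives a strictly decreasing sequence of recorded bounds whose infimum can sit strictly above the minimal consistent value (or the sentence may never be recorded at all). This matters in two places. First, $d^\ast(a,a)=0$ is not guaranteed by your construction: \Cref{prop-of-con}(1) only says small bounds on $d(a,a)$ are \emph{addable}, and nothing in your obligation list forces them to be \emph{added}; without reflexivity the quotient by the zero-distance relation does not make sense. Second, and more seriously, your induction hypothesis is only the one-sided bound $\phi^{\tA}<r$, i.e., an upper bound on values. The case $\phi=-\psi$ requires a strict \emph{lower} bound $\psi^{\tA}>-r$, and upper bounds on the recorded data for $\psi$ cannot produce one. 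The paper's induction hypothesis is the exact equality $\phi^{\tA}=\inf\{r\mid(\phi,r)\in\Gamma\}$, whose lower-bound half is precisely what (C2)+(C3) convert into the negation case, and (P1) is the mechanism that makes that equality available. You would either need to add (P1) to your chain, or route every negation through (C4) and $\sim$; but the latter bottoms out at $-1$ times a quantifier-free matrix, where no clause of \Cref{conprop} supplies lower bounds for compound quantifier-free formulas, so (P1) cannot be avoided.

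A smaller inaccuracy: (C12b) does \emph{not} make every closed term equivalent to a single constant; it only provides, for each rational $r>0$, some constant within $r$ of the term, and these constants may vary with $r$. So $f(\bar c)$ must be interpreted as the limit in the completion of a sequence of constants, verified to be Cauchy via (C11) (this is the content of \Cref{approx-functions} and \Cref{approx-terms}); your plan to interpret function symbols directly ``on $C$'' and then extend does not quite work. This is a local fix, unlike the (P1) issue above.
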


\begin{proof}
Let $F\subset L$ be the smallest set of $L_{\infty,\omega}^\mathbb{R}(\tau)$-sentences such that:
\begin{itemize}
\item $\phi\in F$ for all sentences $\phi$ that appear in $\sigma$;
\item $F$ is closed under sub-sentences;
\item if $\phi(\overline v)$ is a sub-formula of a sentence in $F$ and $\overline c\in C$, then $\phi(\overline c)\in F$;
\item if $-\phi\in F$, then $\sim\phi\in F$;
\item if $a,b\in C$, then $d(a,b)\in F$.
\end{itemize}
Now fix a listing $\{\phi_i\}_{i\in\omega}$ of all $F$-sentences such that each sentence appears infinitely often. Similarly, fix a listing $\{t_i\}_{i\in\omega}$ of all $L$-terms without variables such that each term appears infinitely often, recall $\tau$ is countable. We now build a sequence 
$\sigma=\sigma_0\subset\sigma_1\subset\dotsc$ as follows: given $\sigma_i$ find $\sigma_{i+1}\in \Sigma$ with the following properties:

\begin{enumerate}[label={(P\arabic*)}]
\item there is some rational $r$ with $r<2^{-(i+1)}+\inf\{s\mid \exists\mu\in\Sigma,\ \sigma_i\cup\{(\phi_i,s)\}\subset\mu\}$ and $(\phi_i,r)\in\sigma_{i+1}$. Moreover, for $(\phi_i,r)$ we also require that: 
\begin{enumerate}[label={(\Alph*)}]
\item if $\phi_i$ is $\psi_1+\psi_2$, then $\sigma_{i+1}$ satisfies the conclusion of (C5);
\item if $\phi_i$ is $q\psi$ for some $q\in\mathbb{Q}$, then $\sigma_{i+1}$ satisfies the conclusion of (C6);
\item if $\phi_i$ is $\inf_n \psi_n$, then $\sigma_{i+1}$ satisfies the conclusion of (C8);
\item if $\phi_i$ is $\inf_{\overline v} \psi(\overline v)$, then $\sigma_{i+1}$ satisfies the conclusion of (C10);
\end{enumerate}
\item if $\phi_i$ is $\psi(a_1,\dotsc, a_n)$, for some $a_1,\dotsc,a_n\in C$, and there are terms $t_1,\dotsc, t_n$ with no variables such that
$$(d(a_1,t_1),r_1),\dotsc, (d(a_n,t_n),r_n), (\phi(t_1,\dotsc,t_n),s) \in \sigma_i,$$
then $\sigma_{i+1}$ satisfies the conclusion of (C11);
\item $(d(c,t_i),r)\in \sigma_{i+1}$ for some $c\in C$ and some $r\leq 2^{-(i+1)}$.
\end{enumerate}

Where  $\max I_\phi$ is the infimum if the set is empty. Write $\Gamma=\bigcup_n \sigma_n$. \par

Our goal is to define a metric structure made from the constants where the value of any sentence $\phi$ in the expanded language is $\inf\{r\mid (\phi,r)\in\Gamma\}$. Hence, at each stage, lowering this infimum is giving us more information about the value of the sentence $\phi$. Seen this way, (P1) ensures that every time we consider a formula, we extend to a condition that provides close to the maximal amount of information. (P2) guarantees that formulas satisfy their prescribed modulus of continuity, and (P3) will make the constants dense in the resulting structure \par

 We now build a metric structure using the constants. For constants $a,b\in C$, we say that $a\sim b$ if $\Gamma\vdash d(a,b)=0$ (i.e. if $\inf\{r\mid (d(a,b),r)\in\Gamma\} = 0$). 

\begin{claim} $\sim$ is an equivalence relation on $C$.
\end{claim}

\begin{proof}
Note that for any $a\in C$, $d(a,a)$ is a formula in $F$, so say it appears in the construction as $\phi_i$. By the first statement of \Cref{prop-of-con}, some extension of $\sigma_i$ in $\Sigma$ contains $(d(a,a),r_1)$ for some $r_1\leq 2^{-(i+1)}$, and then (P1) above implies that $(d(a,a),r)\in\sigma_{i+1}$ for some $r<r_1+2^{-(i+1)}\leq 2^{-i}$. Thus, as $d(a,a)$ is listed infinitely often, it follows that $\inf\{r\mid (d(a,a),r)\in\Gamma\} = 0$, so $c\sim c$.\par 

If $\inf\{r\mid (d(a,b),r)\in\Gamma\} = 0$, then for any $\epsilon>0$ there is an $s$  and an $i$ such that $(d(a,b),s)\in \sigma_{i}$, $\phi_i$ is $d(b,a)$, and $s+2^{-(i+1)}<\epsilon$ (here we use the fact that $d(b,a)$ appear arbitrarily far in the listing of sentences). By condition (C12a) of \Cref{conprop} and (P1), it follows that $(d(b,a),s')\in\sigma_{i+1}$ for some $s'<s+2^{-(i+1)}<\epsilon$. Thus, $b\sim a$.\par

Transitivity is similar: fix $\epsilon>0$ and choose $r,s,i$ such that $(d(a,b),r),(d(b,c),s)\in \sigma_i$, $\phi_i$ is $d(a,c)$, and $r+s+2^{-(i+1)}<\epsilon$. Then, apply the second statement in \Cref{prop-of-con} and (P1) to obtain that there is an $s'<r+s+2^{-(i+1)}<\epsilon$ such that $(d(a,c),s')\in\sigma_{i+1}$. 
\end{proof}

Let $\A=C/\sim$, and write $[c]$ for the $\sim$-equivalence class of $c\in C$. We can now proceed to define the intepretations of the predicates on $\A$:

\begin{claim}
For $a,b\in C$, let $d^\A([a],[b])=\inf\{r\mid (d(a,b),r)\in\Gamma\}$. Then $d^\A$ defines a metric on $\A$ that repects $I_d=[0,1]$ and $\Delta_d(r_1,r_2)=r_1+r_2$. 
\end{claim}

\begin{proof}
Recall that $I_d=[0,1]$, so by construction $0\leq d(a,b)\leq 1$ for all $a,b\in C$, and hence the same holds for the equivalence classes. It should also be noted that (C2) implies that if $s=\inf\{r\mid (d(a,b),r)\in\Gamma\}$, then $(d(a,b),s)\not\in\Gamma$, and thus ``it is consistent that the distance is equal to $s$.''  \par

To show $d^\A$ is a metric, note that $d^\A([a],[b])=0$ iff $[a]=[b]$ as we have quotiented out by precisely that equivalence relation. Symmetry of $d^\A$ can be proven using a similar argument to the one used in the previous claim. This leaves us with the triangle inequality: suppose $a,b,c\in C$ and rational $r,s$ are such that $(d(a,b),r),(d(b,c),s)\in\sigma_i$, and $\phi_i$ is $d(a,c)$. Then (P2) implies us there is an  $q\leq \Delta_{d(x,c)}(r)+s=r+s$ such that  $(d(a,c),q)\in \sigma_{i+1}$. As this is true for any values of $r$ and $s$, we obtain that
$$\inf\{q\mid (d(a,c),q)\in\Gamma\}\leq \inf\{r\mid (d(a,b),r)\in\Gamma\}+\inf\{s\mid (d(b,c),s)\in\Gamma\}$$
and $d^\A([a],[c])\leq d^\A([a],[b])+d^\A([b],[c])$.\par

Moreover, we can apply the triangle inequality a couple times to show that $d^\A$ satisfies $\Delta_d$: fix $a_1,a_2,b_1,b_2\in C$ and assume that $d^\A([b_1],[b_2])\leq d^\A([a_1],[a_2])$. Then:
\begin{align*}
d^\A([a_1],[a_2]) &\leq d^\A([a_1],[b_1]) + d^\A([b_1],[a_2])\\
&\leq d^\A([a_1],[b_1]) + d^\A([b_1],[b_2]) + d^\A([b_2],[a_2])
\end{align*}
so by our assumption we get that
\begin{align*}
| d^\A([a_1],[a_2]) -  d^\A([b_1],[b_2])| & \leq d^\A([a_1],[b_1]) + d^\A([b_2],[a_2])\\
&= d^{\Delta_d}(([a_1],[a_2]), ([b_1],[b_2])).
 \end{align*}
Finally, note that this definition does not depend on the representatives chosen for each equivalence class: given $\epsilon>0$, and assuming for simplicity we can always choose nice multiples of $\epsilon$ in our construction, suppose that $(d(a',a),\epsilon),(d(b',b),\epsilon), (d(a,b),r)\in\sigma_i$ for $i$ such that $2^{-(i+1)}<\epsilon$. Next, find $i_0>i$ such that $\phi_{i_0}$ is $d(a',b)$ and apply (P2) with $t_0=a, t_1=b$ to get $(d(a',b),r+\epsilon)\in\sigma_{i_0+1}$. Note that we can permute the variables at the cost of at most $\epsilon$, so take $i_1>i_0$ with $\phi_{i_1}$ is $d(b',a')$ and $(d(b,a'),r+2\epsilon)\in\sigma_{i+1}$; apply (P2) with $t_0=b, t_1=a'$ to $b'$ to get $(d(b',a'),r+3\epsilon)\in\sigma_{i_1+1})$. The result follows by an additional permutation of variables, and  using $\epsilon/4$ everywhere.
\end{proof}

We can extend the same argument to show that we have suitable interpretations for all predicates in $\tau$ as follows: for $P\in \tau$, an $n$-ary relation symbol, and $c_1,\dotsc, c_n\in C$, we define 
$$P^\A([c_1],\dotsc, [c_n]) = \inf\{r\mid (P(c_1,\dotsc,c_n),r)\in\Gamma\}$$
where again if the set is empty then we take $\max I_P$ as the infimum. It should be noted that, as with the distance, if $s$ is the infimum on the right-hand side, then $(P(c_1,\dotsc, c_n), s)\not\in \Gamma$ by (C2).  The construction guarantees that $P^\A$ satisfies $I_p$, and (P2) implies $P^\A$ satisfies $\Delta_P$. Moreover, we can also use (P2) as in the previous claim to show that this definition does not depend on the choice of representatives. 

Next, we want to interpret the function symbols of $\tau$ in $\A$, but $\Gamma$ only allows us to do so approximately. Hence, it is convenient to at this point to pass to $\tA$, the completion of $\A$ with respect to the metric $d^\A$. We also extend the interpretations of the predicates $P\in\tau$ such that $P^{\tA}$ is a continuous function that satisfies $\Delta_P$.\par

For the next claim, we say that $\Gamma$ proves that $\lim_n t_n=t$, for $\tau$-terms $t,\{t_n\}_{n<\omega}$ without variables, if for every $\epsilon>0$ there is an $N$ such that if $n>N$, then there is a $\delta_n<\epsilon$ such that $(d(a_n, a),\delta_n)\in \Gamma$.

\begin{claim}\label{approx-functions}
Let $f$ be an $n$-ary function symbol in $\tau$ and $c_1,\dotsc, c_n\in C$. Then, there is a sequence $\{a_j\}_{j\in\omega}\subset C$ such that $\{[a_j]\}_{j\in\omega}\subset C$  is Cauchy in $\tA$ and $\Gamma$ proves that the sequence to converge to $f(c_1,\dotsc, c_n)$ . 
\end{claim}
\begin{proof}
Let $i_0<i_i<\dotsc$ be the infinitely many indices such that $t_{i_j}$ is $f(c_1,\dotsc, c_n)$. To ease notation, will henceforth just call the term $t$ . Using (P3) from the definition of $\sigma_{i+1}$, choose $a_j\in C$ be such that $(d(a_j, t), r_j)\in \Gamma$ for some $r_j\leq 2^{-(i_j+1)}$.\par

To show that $\{[a_j]\}_{j\in\omega}\subset C$  is Cauchy in $\tA$, fix $j<k<\omega$. Let $i>k$ be large enough so that $(d(a_j, t), r_j), (d(a_k, t), r_k)\in \sigma_i$, and $\phi_i$ is $d(a_j, t)$. By (C11), there is a $\mu\in\Sigma$ such that $\sigma_i\cup\{(d(a_j,a_k),q)\}\subset\mu$ for some $q\leq \Delta_d(r_j)+r_k= r_j+r_k\leq 2^{-j}$, and by (P2) we have that $(d(a_j,a_k),q)\in\sigma_{i+1}$. Therefore, $d^\A([a_j],[a_k])\leq 2^{-j}$, from which the result follows.\par
Now that we know the sequence is Cauchy, it is clear from the choice of $a_j$ that $\Gamma$ proves $a_j\to f(c_1,\dotsc, c_n)$ as $j\to\infty$. Moreover, a similar argument to the one above shows that if $\{b_j\}_{j\in\omega}$ is any other sequence satisfying the assumptions of the claim, then $\Gamma$ proves that $d(a_j,b_j)\to 0$ as $j\to\infty$, so this notion is well defined.
\end{proof}

Thus, for an $n$-ary function symbol $f$ and $c_1,\dotsc c_n\in C$, we define $f^{\tA}([c_1],\dotsc,[c_n])$ to be the limit in $\tA$ of some, equivalently every, sequence guaranteed by the claim. As it has been done throughout this proof, (P2) applied to $d(f(\overline x),f(\overline y))$ shows that $f^{\tA}$ satisfies $\Delta_f$, so we can extend this definition to all of $\tA$ in the unique way that makes $f^{\tA}$ to be continuous. Next, we use induction on the terms to extend \Cref{approx-functions} to all $\tau$ terms:

\begin{claim}\label{approx-terms}
Let $t(v_1,\dotsc, v_n)$ be a $\tau$-term and $c_1,\dotsc, c_n\in C$. Then, there is a sequence $\{a_j\}_{j\in\omega}\subset C$ such that $\{[a_j]\}_{j\in\omega}\subset C$  is Cauchy in $\tA$ and $\Gamma$ proves that the sequence to converge to $t(c_1,\dotsc,c_n)$ . 
\end{claim}

Therefore, as before, we define  $t^{\tA}([c_1],\dotsc,[c_n])$ to be the limit of any such Cauchy sequence. Then  $t^{\tA}$ satisfies $\Delta_t$ and we extend this definition to all of $\tA$ in the unique way that makes the term continuous. Thus, $\tA$ is a complete metric space where every $\tau$-symbol is interpreted as a uniformly continuous function that satisfies the modulus of continuity, and the bound (for predicates). Thus, $\tA$ is a $\tau$-structure. It only remains to show that:

\begin{claim}
If $(\phi,r)\in\Gamma$, then $\phi^{\tA}<r$. In fact, $\phi^{\tA}=\inf\{r\mid (\phi,r)\in\Gamma\}$.
\end{claim}
\begin{proof}
The proof is by induction on the complexity of $\phi$:
\begin{itemize}[leftmargin=*]
\item If $\phi$ is $d(t_1,t_2)$, then let $\{a_j\}_{j\in\omega}$ and  $\{b_j\}_{j\in\omega}$ be the sequence guaranteed by \Cref{approx-terms}. Suppose that$(\phi,r)\in\sigma_{i_0}$ for some $i_0$, then the fact that $\phi_i$ is $\phi$ for infinitely many $i>i_0$, and (C2) and (P1), imply that
$$s=\inf\{s'\mid (\phi,s)\in\Gamma\}<r,$$
so let $\epsilon<r-s$ and assume going forward that we can always find nice multiples of $\epsilon$ inside $\Gamma$. Now, take $i_1>i_0$ and a $j_1<\omega$ such that:
\begin{enumerate}[label={(\roman*)}]
\item $\phi_{i_1}$ is $d(a_{j_1},t_2)$;
\item  $(d(t_1,t_2),s+\epsilon)\in\sigma_{i_1}$;
\item $(d(a_{j_1},t_1),\epsilon)\in\sigma_{i_1}$;
\item $\Gamma$ proves that for $j>j_1$, $d(a_{j_1},a_j)<\epsilon$. 
\end{enumerate}
we can now apply (P2) to see that $(d(a_{j_1},t_2),s+2\epsilon)\in \sigma_{i_1+1}$. Now take $i_2>i_1$ and $j_2$ with $\phi_{i_2}$ is $d(a_{j_1},b_{j_2})$ and similar properties to the ones above to see that $(d(a_{j_1},b_{j_2}),s+3\epsilon)\in\sigma_{i_2+1}$. Sending $\epsilon\to 0$, we see that if $a_j\to a=t_1^{\tA}$ and $b_j\to b=t_2^{\tA}$, then $d^{\tA}(a,b)=s<r$ and we are done.
\item The case when $\phi$ is $P(t_1,\dotsc, t_n)$ is similar to the one above, with one application of (P2) for each term (with a different sentence each time).

\item If $\phi$ is $\psi_1+\psi_2$, then choose a stage $i$ such that $\phi_i$ is $\psi_1+\psi_2$. Note that $(\psi_1+\psi_2,r)\in\sigma_i$, so (P1) and (A) imply that there is an $s\in\bbQ$ such that $(\psi_1,s)$ and $(\psi_2,r-s)$ are in $\sigma_{i+1}$, and we use induction to get $\psi_1^{\tA}<s$ and $\psi_2^{\tA}<r-s$.

\item If $\phi$ is $q\psi$, assume that $\phi_i$ is $q\psi$. For $q>0$, (B) implies that $\sigma_{i+1}$ contains $(\psi,s)$ for some $s\leq r/q$. By induction, $\psi^{\tA}<r/q$. If $q<0$, (B) and induction imply that $-\psi^{\tA}<r/|q|$, so $q\psi^{\tA}<r$. If $q=0$, then (B) implies that $(q\psi)^{\tA}=0$. 
\item If $\phi$ is $\sup_n\psi_n$, assume that $(\phi,r)\in \sigma_i$. Then choose $i<i_0<i_1<\dotsc$ such that $\phi_{i_n}$ is $\psi_n$, and now use (C2), (C7) and (P1) to see that $(\psi_n, r)\in \Gamma$, actually for some value $\leq r$, and the result follows by induction.
\item If $\phi$ is $\inf_n\psi_n$, assume that $(\phi,r)\in \sigma_i$. Then (C) guarantees that $(\psi_n,r)\in\sigma_{i+1}$ for some $n$ and proceed by induction. 
\item If $\phi$ is $\sup_{\overline v}\psi(\overline v)$, assume that $(\phi,r)\in \sigma_i$. Then choose $i<i_0<i_1<\dotsc$ such that $\phi_{i_n}$ is $\psi(\overline c_{i_n})$ , for a listing of $C^{|\overline v|}$, and now use (C2), (C9) and (P1)  to see that $(\psi(\overline c_n),r)\in \Gamma$, really for some value $\leq r$, and the result follows by induction.
\item If $\phi$ is $\inf_{\overline v}\psi(\overline v)$. Then condition (D) guarantees that $(\psi(\overline c),r)\in\sigma_{i+1}$ for some $\overline c\in C^{|\overline v|}$ and proceed by induction. 
\item If $\phi$ is $-\psi$, assume that $(\phi,r)\in \sigma_i$. Note that $\psi$ is $\phi_ j$ for infinitely many $j$, so  (C2) and (C3) imply that
$$-r<\sup\{-r'\mid (-\psi,r')\in\Gamma\}\leq s=\inf\{s'\mid (\psi,s')\in\Gamma\}$$
and by induction of $\psi$ we get that $\psi^{\tA}=s$, so $\left(-\psi\right)^{\tA}=-\left(\psi^{\tA}\right) = -s<r$ and we are done.
\end{itemize}
\end{proof}
Thus, as $\sigma=\sigma_0\subset\Gamma$, $\tA$ is the desired model.
\end{proof}
\begin{remark}
Observe that it is sufficient to construct $\Gamma$ satisfying properties (P1)-(P3) to construct a model.
\end{remark}

\subsection{The Type Omitting Theorem} The main reason to prove this version of the model existence theorem is to prove a specific kind of type omitting theorem for infinitary continuous logic. Our objective is to find a generalization of the following theorem of Montalb\'an:

\begin{theorem}[\cite{montalban_robuster_2015}]
Let $\mathcal{A}$ be a countable structure, $\phi$ a $\Pi_{\alpha+1}$ sentence true in $\mathcal{A}$, and $\Phi(\bar x)$ be a partial $\Pi_\alpha$ type realized in $\mathcal{A}$ but not $\Sigma_{\alpha}$ supported in $\mathcal{A}$. Then, there is a countable structure $\mathcal{B}$ such that $\mathcal{B}\models\phi$ but omits $\Psi$. 
\end{theorem}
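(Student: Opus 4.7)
I would prove this via a Henkin-style construction using a discrete analogue of the consistency property formalism just developed in \Cref{conprop,model-construction}. Expand the language with countably many Henkin constants $C$ and aim to build a complete, consistent theory $T^{*}$ in the expanded language such that $T^{*}\vdash\phi$ and, for every tuple $\bar c$ from $C$, $T^{*}\vdash\neg\psi_{\bar c}(\bar c)$ for some $\psi_{\bar c}\in\Phi$. The canonical model of $T^{*}$ generated by $C$ is then a countable $\mathcal{B}$ modeling $\phi$ and omitting $\Phi$. Declare $\sigma\in\Sigma$ precisely when $\sigma$ is a finite set of $\Sigma_\alpha$ sentences (in the Henkin-expanded language) which is \emph{realized in $\mathcal{A}$}, in the sense that if $\bar c_1,\dots,\bar c_k$ are the tuples of constants appearing in $\sigma$, there exist $\bar a_1,\dots,\bar a_k$ in $\mathcal{A}$ simultaneously satisfying the natural conjunction. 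The Henkin-type closure conditions analogous to those of \Cref{conprop} hold using $\mathcal{A}$ as a source of witnesses.

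\textbf{The diagonalization.} The central new step is forcing the omission of $\Phi$ at every tuple. Fix a stage $\sigma\in\Sigma$ and a tuple $\bar c$ of constants that needs to be diagonalized. Let $\sigma_{\bar c}(\bar x)$ be the conjunction of all $\psi(\bar x)$ with $\psi(\bar c)\in\sigma$; then $\sigma_{\bar c}$ is $\Sigma_\alpha$ and, by the realizedness of $\sigma$, holds of some tuple in $\mathcal{A}$. Since $\Phi$ is not $\Sigma_\alpha$-supported in $\mathcal{A}$, $\sigma_{\bar c}$ fails to imply $\Phi$ in $\mathcal{A}$: there exist $\psi_{\bar c}\in\Phi$ and $\bar a'\in\mathcal{A}$ with $\mathcal{A}\models\sigma_{\bar c}(\bar a')\wedge\neg\psi_{\bar c}(\bar a')$. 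Because $\psi_{\bar c}\in\Pi_\alpha$, its negation is $\Sigma_\alpha$, and so $\sigma\cup\{\neg\psi_{\bar c}(\bar c)\}$ is still in $\Sigma$, witnessed by replacing the realizer for $\bar c$ with $\bar a'$ and leaving the other realizers unchanged. The truth of $\phi=\bigwedge_i\forall\bar y_i\,\theta_i(\bar y_i)$ with $\theta_i\in\Sigma_\alpha$ is preserved throughout, because whenever a universal instance $\theta_i(\bar c)$ must be added to decide a Henkin test, it is $\Sigma_\alpha$ and is automatically satisfied by the realizing tuple in $\mathcal{A}\models\phi$.

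\textbf{Assembly and main obstacle.} Standard bookkeeping produces an increasing chain $\sigma_0\subseteq\sigma_1\subseteq\cdots$ in $\Sigma$ that (i) Henkinizes every $\exists$-subformula of $\phi$ and of the $\psi\in\Phi$, (ii) decides every atomic sentence, and (iii) diagonalizes against every tuple from $C$ by the key extension above; the union determines $T^{*}$, and the model existence argument in the spirit of \Cref{model-construction} yields $\mathcal{B}$. The only genuinely delicate point is the diagonalization step, and it rests entirely on the fact that in the discrete setting the negation of a $\Pi_\alpha$ formula is $\Sigma_\alpha$. This is exactly the feature that fails in continuous logic: negations of $\sup^\alpha$ predicates do not stay in a matching complexity class because of the subtleties around $\dot-$ and infinitary $\sup/\inf$, which is presumably why the continuous analogue developed in this section must be weakened, as the author previews in the introduction.
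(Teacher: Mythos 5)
Your overall strategy matches the sketch the paper gives (the theorem is cited from Montalb\'an's work, and the paper only says the proof ``proceeds by constructing a set of formulas akin to $\Gamma$ in \Cref{model-construction} using $\Sigma_\alpha$ sentences in an expanded language with Henkin constants''), and your closing diagnosis of why the continuous analogue must be weaker is exactly the right one. But the diagonalization step as you wrote it has a genuine gap. You define $\sigma_{\bar c}(\bar x)$ to be the conjunction of the sentences in $\sigma$ that mention only the constants in $\bar c$, find $\bar a'$ with $\mathcal{A}\models\sigma_{\bar c}(\bar a')\wedge\neg\psi_{\bar c}(\bar a')$, and then claim that $\sigma\cup\{\neg\psi_{\bar c}(\bar c)\}$ is realized ``by replacing the realizer for $\bar c$ with $\bar a'$ and leaving the other realizers unchanged.'' That fails whenever $\sigma$ contains a sentence $\rho(\bar c,\bar d)$ linking $\bar c$ to other Henkin constants $\bar d$: the new $\bar a'$ is only known to satisfy the sentences about $\bar c$ alone, and there is no reason that $\bar a'$ together with the old realizer $\bar b$ of $\bar d$ still satisfies $\rho$, so realizability of $\sigma$ in $\mathcal{A}$ can be destroyed.

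The standard fix --- and the one the paper's continuous analogue \Cref{type-omitting} performs explicitly by passing from $\phi(\bar x,\bar y)$ to $\inf_{\bar y}\phi(\bar x,\bar y)$ --- is to existentially quantify out the remaining constants before invoking non-supportedness: apply it to the $\Sigma_\alpha$ formula $\theta(\bar x)=\exists\bar y\,\sigma(\bar x,\bar y)$, where $\bar y$ stands for every constant occurring in $\sigma$ other than those in $\bar c$. Since $\sigma$ is realized in $\mathcal{A}$, this $\theta$ is a satisfiable $\Sigma_\alpha$ formula, so non-supportedness yields $\psi_{\bar c}\in\Phi$ and $\bar a'$ with $\mathcal{A}\models\theta(\bar a')\wedge\neg\psi_{\bar c}(\bar a')$; a witness $\bar b'$ to the existential then gives a full new realizer for $\sigma\cup\{\neg\psi_{\bar c}(\bar c)\}$. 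This also handles the interaction with earlier diagonalization steps: the clauses $\neg\psi_{\bar d'}(\bar d')$ added at earlier stages are $\Sigma_\alpha$ and already sit inside $\sigma$, so they are preserved by the new realizer. Without the existential projection, the construction cannot in general be continued, so as written the argument does not go through.
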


 Here, supported is a notion close to isolation, and enough to guarantee that we can omit $\Psi$ while still satisfying the sentence $\phi$.The proof of the theorem proceeds by constructing a set of formulas akin to $\Gamma$ in \Cref{model-construction} using $\Sigma_{\alpha}$ sentence in an expanded language with Henkin constants.\par

For the continuous case, following the ideas of Ben Yaacov and Eagle on type omitting, we want to define a notion of a type being supported so that if an $(\Omega,\sup^\alpha)$-type $\Psi$ is not supported, then some set containing it should be topologically meager and metrically open. Our main issue is that it is not clear that partial types live in a natural topometric space. In particular, the definition for distance given in \Cref{section-types} does not, at the moment, seem to work for, say, $(\Omega,\sup^1)$-types as the formulas on the right hand side are $\inf^2$. This increase in complexity is one of the main differences between the discrete case developed in \cite{montalban_robuster_2015} and \cite{montalban2021} and the continuous one presented here. It remains open whether the result on this section, and therefore this paper as a whole, can be strengthen to work for a broader collection of partial types.\par 

Nevertheless, there is a natural collection of types where we can use the distance defined in \Cref{section-types} to omit types: complete $(\Omega,\sup^{<\alpha})$-types for $\alpha$ is a countable limit ordinal:

\begin{definition}
Fix a weak modulus of continuity $\Omega$. Given an $\Omega$-formula $\psi(\bar x)$, a real number $r$, and some $\epsilon>0$, let 
$$\theta(\psi,r,\epsilon)(\bx) = \inf_{\by} \max(d_\Omega(\bx,\by)\dot- \epsilon\bone, |\psi(\by)-r\bone|).$$
\end{definition}

We have that $\theta(\psi,r,\epsilon)^{\tA}(\bx)=0$ if, for every $\delta>0$, there is some $\by$ within $\epsilon$ of $\bx$, as measured by $d_\Omega$, such that $|\psi^\tA(\by)-r|<\delta$. In particular, if $\theta(\bx)>0$, then no element in the $\epsilon$-ball centered at $\bx$ believes that $\psi=r$. These formulas appear on the right-hand side of the distance defined in \Cref{section-types} and will give us a criterion for type omitting.

It is also important to note the quantifier complexity of $\theta(\psi,r,\epsilon)$: since $r$ and $\epsilon$ are real numbers, and we cannot assume $r$ is a rational, $r\bone$ and $\epsilon\bone$ are both $\inf^2$ and $\sup^2$. Thus, if $\psi$ is $\sup^\alpha$ or $\inf^\alpha$, we have that $\theta(\psi,r,\epsilon)$ is $\inf^{\max(\alpha+1, 3)}$. 

\begin{definition}
Let $\Omega$ be a weak modulus, $\tA$ a metric $\tau$-structure, and $\alpha>0$ an ordinal. A partial $(\Omega, L)$-type $\Psi(\bx)$ is supported in $\tA$ if there is a definable predicate $P(\bx)$, with $\min I_P=0$, such that
$$\inf_{\bx}P^{\tA}(\bx)=0$$
and for every $\epsilon>0$ and every $\bx\in \tA^{|\bx|}$: 
\begin{center}
if $P(\bx)<\epsilon$ then $\theta^{\tA}(\psi ,r_\psi, \epsilon)(\bx)=0$ for every $(\psi, r_\psi)\in \Psi(\bar x)$.
\end{center}
Moreover, we say that $\Psi(\bx)$ is $(\Omega,\overline\Gamma)$-supported if $P(\bx)$ is a weakly $(\Omega,\Gamma)$ definable predicate.
\end{definition}

The two conditions on $P$ together tell us that $\Psi(\bx)$ is approximately realizable in $\tA$. Moreover, if $P^{\tA}(\bx)=0$, then $\bx$ is a realization of $\Psi$.\par

\begin{remark}
When considering a partial  $(\Omega, \overline L)$-type, or any other partial type made out of definable predicates, this definition is only useful if we add the condition that $\overline{\interior{\Psi(\bx)}}=\Psi(\bx)$, meaning that every predicate in $\Psi$ is the uniform limit of formulas in $\Psi$. In such a situation, we can extend the conclusion for the whole type: $P(\bx)<\epsilon$ then $\theta^\tA(Q, 0, \epsilon)(\bx)=0$ for every $Q\in \Psi(\bar x)$. Note that this is always true when $\Psi(\bx)=(\Omega,\overline{\sup^\alpha})-\operatorname{tp}(\bar a)$ for some $\bar a\in \tA^{<\omega}$. 
\end{remark}

\begin{lemma}\label{char-supported}
Let $\Omega$ be a weak modulus, $\tA$ a metric $\tau$-structure, and $\alpha>0$ a limit ordinal. Let $\Psi(\bx)= (\Omega,\overline{\sup^{<\alpha}})-\operatorname{tp}(\bar a)$ for some $\bar a\in \tA^{<\omega}$. If
\begin{enumerate}[label={($\bigstar$)}]
\item for every $m\geq 1$, there is an $(\Omega, \inf^{<\alpha})$ formula $\phi_m$ and a $\delta_m>0$ such that
\begin{enumerate}
\item there is some $\bx_m\in \tA^{|\bx|}$ with $\phi_m^{\tA}(\bx_m)<\delta_m$;
\item for every $x\in \tA^{|\bx|}$, if $\phi_m^\tA(\bx)<\delta_m$ then $\theta(\psi,r_\psi,2^{-m})^\tA(\bx)=0$ for all $(\psi,r_\psi)\in \interior{\Psi(\bx)}$.
\end{enumerate}
\end{enumerate}
Then $\Psi(\bx)$ is $(\Omega, \overline{\inf^{<\alpha}})$ supported in $\tA$.
\end{lemma}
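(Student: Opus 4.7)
The plan is to synthesize the data $(\phi_m,\delta_m,\bx_m)$ provided by $(\bigstar)$ into a single weakly $(\Omega,\inf^{<\alpha})$ definable predicate $P(\bx)$ witnessing the supportedness of $\Psi$. For each $m\geq 1$, using $\phi_m^{\tA}(\bx_m)<\delta_m$, I would fix rationals $q_m,c_m$ with $\phi_m^{\tA}(\bx_m)\leq q_m<\delta_m$ and $c_m\geq 1/(\delta_m-q_m)$, and form the normalized auxiliary formula
$$h_m(\bx)\;=\;\min\bigl(\bone,\ c_m\cdot(\phi_m(\bx)\ \dot-\ q_m\bone)\bigr).$$
By construction $h_m^{\tA}(\bx_m)=0$, while $h_m^{\tA}(\bx)<1$ forces $\phi_m^{\tA}(\bx)<\delta_m$. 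Since $\alpha$ is a limit ordinal and each $\phi_m$ is $\inf^{\beta_m}$ for some $\beta_m<\alpha$, the propositional operations together with rational scaling preserve $\inf^{<\alpha}$ complexity (via \Cref{ok-switch} and the commutation of $\min$ and $\max$ with countable $\inf$), so each $h_m$ is a weak $(\Omega,[0,1])$-$\inf^{<\alpha}$ formula.

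Next, I define
$$P(\bx)\;=\;\inf_m\max\bigl(2^{-m}\bone,\ h_m(\bx)\bigr),$$
a countable infimum which is a legal $\tau$-formula because all the terms share the modulus $\Omega$ and the bound $[0,1]$. Verification of supportedness then proceeds in two steps. The witnesses give $P^{\tA}(\bx_m)\leq 2^{-m}$, hence $\inf_{\bx}P^{\tA}=0$. For the main implication, suppose $P^{\tA}(\bx)<\epsilon$ with $\epsilon\in(0,1]$; then some $m$ attains $\max(2^{-m},h_m^{\tA}(\bx))<\epsilon$, forcing $2^{-m}<\epsilon$ and $h_m^{\tA}(\bx)<\epsilon\leq 1$, so $\phi_m^{\tA}(\bx)<\delta_m$. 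By $(\bigstar)$(b), $\theta(\psi,r_\psi,2^{-m})^{\tA}(\bx)=0$ for every $(\psi,r_\psi)\in\interior{\Psi(\bx)}$, and since $\theta(\psi,r_\psi,\cdot)$ is non-increasing in its scale parameter with $2^{-m}<\epsilon$, we conclude $\theta(\psi,r_\psi,\epsilon)^{\tA}(\bx)=0$. For $\epsilon>1$, substituting $\by=\bar a$ in the defining infimum of $\theta$ yields $\theta(\psi,r_\psi,\epsilon)(\bx)\leq (d_\Omega(\bx,\bar a)-\epsilon)^+$, which vanishes once $\epsilon$ exceeds the $d_\Omega$-diameter of $\tA^{|\bx|}$; a harmless rescaling of $P$ by a rational majorizing this diameter handles intermediate $\epsilon$.

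The main technical obstacle is confirming that $P$ lies in the uniform closure $\overline{(\Omega,\inf^{<\alpha})}$, i.e., is a uniform limit of $\inf^\beta$-formulas with $\beta<\alpha$, rather than being merely a single $\inf^\alpha$-formula. The finite truncations $P_N(\bx)=\inf_{m\leq N}\max(2^{-m}\bone,h_m(\bx))$ have complexity $\inf^{\beta_N^*}$ with $\beta_N^*=\max_{m\leq N}\beta_m<\alpha$ (a finite supremum of ordinals below a limit). Establishing uniform convergence $P_N\to P$ may require a minor tweak to the combination --- for example, inductively replacing $\phi_m$ by $\max_{k\leq m}\phi_k$ (which nests the witness sets $\{\bx\colon\phi_m(\bx)<\delta_m\}$) or truncating at scale $2^{-N+1}$ --- so that the tail is controlled uniformly in $\bx$ without disturbing the two arguments above. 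The limit-ordinal hypothesis on $\alpha$ is essential here: it ensures every $\beta_N^*$ remains strictly below $\alpha$, placing $P$ in the desired class.
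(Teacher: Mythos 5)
Your combination $P(\bx)=\inf_m\max(2^{-m}\bone,h_m(\bx))$ is a nice way to get the matched ``$P<\epsilon\Rightarrow\theta(\cdot,\cdot,\epsilon)=0$'' form of supportedness directly (the paper instead gets an $\epsilon$--$\delta$ statement and repairs it with \Cref{Henson-alpha}), but the issue you defer to the end as a ``minor tweak'' is the crux of the lemma, and neither proposed repair closes it. The truncations $P_N=\inf_{m\leq N}\max(2^{-m}\bone,h_m)$ do not converge uniformly to $P$: if $h_m^{\tA}(\bx)=1$ for all $m\leq N$ while $h_{N+1}^{\tA}(\bx)=0$, then $P_N^{\tA}(\bx)=1$ but $P^{\tA}(\bx)\leq 2^{-(N+1)}$, and nothing in $(\bigstar)$ prevents the sets $\{\phi_m<\delta_m\}$ from being pairwise disjoint, so this can happen for arbitrarily large $N$. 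Nesting via $\max_{k\leq m}\phi_k$ destroys hypothesis (a), since $(\bigstar)$ only supplies a separate witness $\bx_m$ for each $m$ and gives no point where $\phi_1,\dots,\phi_m$ are simultaneously small; truncating at scale $2^{-N+1}$ changes the limit. The missing idea --- and the heart of the paper's proof --- is to replace each $\phi_m$ by its $2^{-m}$-neighbourhood relaxation $\hat\phi_m(\bx)=\inf_{\by}\max\bigl(d_\Omega(\bx,\by)\ \dot-\ 2^{-m},\ \phi_m(\by)\ \dot-\ \eta_m\bigr)$ and to prove that $\hat\phi_m^{\tA}(\bar a)=0$ for \emph{every} $m$ simultaneously. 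That step genuinely uses that $\Psi$ is the complete $(\Omega,\overline{\sup^{<\alpha}})$-type of $\bar a$: if $\hat\phi_m^{\tA}(\bar a)\geq t>0$, then $t\ \dot-\ \hat\phi_m$ is a weak $(\Omega,\sup^{<\alpha})$ predicate vanishing at $\bar a$, so $(\bigstar)$(b) applied at $\bx_m$ produces a point within $2^{-m}$ of $\bx_m$ where $\hat\phi_m\geq t/2$, contradicting that $\bx_m$ itself witnesses $\hat\phi_m=0$ on that ball. With the common witness $\bar a$ in hand one can use the sum $\sum_m 2^{-m}\hat\phi_m$, whose partial sums converge uniformly for free. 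Your construction never brings $\bar a$ or the completeness of the type into play, and without that there is no way to make the countable combination simultaneously small somewhere and a uniform limit of $\inf^{<\alpha}$ formulas.

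A second, smaller defect: the normalization $c_m\cdot(\phi_m\ \dot-\ q_m\bone)$ with $c_m\geq 1/(\delta_m-q_m)$ scales up by an unbounded factor, so $h_m$ need not respect $\Omega$ (taking $\min$ with $\bone$ controls the range but not the modulus), and then $P$ fails to be a \emph{weakly} $(\Omega,\overline{\inf^{<\alpha}})$ predicate as the conclusion requires. This is repairable by an inf-convolution with $d_\Omega$ as in \Cref{good-approx} --- which is essentially what the relaxation $\hat\phi_m$ already accomplishes --- but it has to be addressed. Note that the paper only ever scales \emph{down} (by $2^{-m}/\delta_m\leq 1$), precisely to avoid this.
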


\begin{proof}
The proof is largely the same as Proposition 12.5 of  \cite{benyaacov2008}. Without loss of generality, we can find numbers $\eta_m$ such that
$$\phi_m^{\tA}(\bx_m)\leq\eta_m<\delta_m\leq 2^{-m}$$
by replacing $\phi_m$ with $\frac{2^{-m}}{\delta_m}\phi_m$ if $\delta_m\geq 2^{-m}$, which is an $\Omega$ formula since $\frac{2^{-m}}{\delta_m}\leq 1$, and choose $\eta_m$ to be rational. Now define the
$$\hat\phi_m(\bx) = \inf_{\by}\max( d_\Omega(\bx,\by)\ \dot-\ 2^{-m}, \phi_m(\by)\ \dot-\ \eta_m).$$

We claim that $\hat\phi_m^{\tA}(\bar a)=0$ for all $m\geq1$. Suppose that $\hat\phi_m^{\tA}(\bar a)\geq t>0$, then $(t\dot-\hat\phi_m,0)\in \interior{\Psi(\bx)}$. Thus, there is some $\bar z\in \tA^{|\bx|}$, such that $d_\Omega(\bx_m,\bar z)\leq 2^{-m}$ and $\hat\phi_m(\bar z)\geq t/2$. However, $\bx_m$ satisfies satisfies the condition inside the infimum for $\bar z$, so $\hat\phi_m^{\tA}(\bar z)=0$. A contradiction. \par

Also, if $\hat\phi_m^\tA(\bx)<\delta_m-\eta_m$, then $\theta^{\tA}(\psi,r_\psi,2^{-m+2})(\bx)=0$ for all $(\psi,r_\psi)\in \interior{\Psi(\bx)}$.\par

Now, consider the $(\Omega,\inf^{<\alpha})$ formulas $\sum_{m=1}^N 2^{-m}\hat\phi_m(\bx)$, which converge uniformly to a weakly-$(\Omega,\inf^{<\alpha})$ definable predicate $P_*=\sum_{m=1}^\infty 2^{-m}\hat\phi_m(\bx)$. Note that $P_*$ now satisfies the following: $P_*^\tA(\ba)=0$ and for every $\epsilon>0$ there is some $\delta>0$ such that if $P_*^\tA(\bx)<\delta_m$ then $\theta(\psi,r_\psi,\epsilon)^{\tA}(\bx)=0$ for all $(\psi,r_\psi)\in \interior{\Psi(\bx)}$.\par

To finish, apply \Cref{Henson-alpha} with $F=P_*$ and $G(\bar x)$ being the least $\epsilon>0$ such that the condition on all the $\theta$ formulas is satisfied. Then $$P(\bar x)=\inf_{\bar y} \alpha P_*(\bar y)+ d_\Omega(\bar x,\bar y)$$ 
is a weakly-$(\Omega,\inf^{<\alpha})$ definable predicate approximated by the $(\Omega,\inf^{<\alpha})$ formulas $$\inf_{\bar y} \sum_{m=1}^N\left( 2^{-m}\alpha_N\hat\phi_m(\by)\right)+d_\Omega(\bar x,\bar y),$$ 
where the $\{\alpha_N\}_{N<\omega}$ are quantifier free $\Omega$ formulas whose uniform limit is $\alpha$. 
\end{proof}

\begin{theorem}\label{type-omitting}
Let $\Omega$ be a weak modulus, and $\alpha>0$ a limit ordinal. Let $\tA$ be a separable metric $\tau$-structure, $Q=\sup_n\sup_{\bx_n}P_n(\bx_n)$ be such that each $P_n$ is a weakly-$(\Omega,\inf^{<\alpha})$ definable predicate, and $Q^\tA=0$. Let $\Psi(\bx)= (\Omega,\overline{\sup^{<\alpha}})-\operatorname{tp}(\bar a)$ for some $\bar a\in \tA^{<\omega}$ which is not $(\Omega, \overline{\inf^{<\alpha}})$ supported in $\tA$. Then there is a separable metric $\tau$-structure $\tB$ such that $Q^\tB=0$ and $\tB$ omits $\Psi(\bx)$.
\end{theorem}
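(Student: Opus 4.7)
The strategy is to apply Theorem~\ref{model-construction} to a consistency property $\Sigma$ engineered so that any model it produces satisfies $Q^\tB = 0$ and omits $\Psi$. I would declare $\sigma \in \Sigma$ precisely when $\sigma$ is a countable set of pairs $(\phi, q)$ satisfying axioms (C1)--(C12), includes (via appropriate formula approximants to each $P_n$ with small slack terms) pairs forcing each $P_n$ to vanish on all Henkin tuples, and is approximately realizable in $\tA$ in the usual Henkin sense. Compatibility uses $Q^\tA = 0$, and checking $\Sigma$ is a consistency property runs parallel to the proof of Theorem~\ref{model-construction}, with $\tA$ supplying the approximate realizer and separability providing Henkin witnesses.

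The core new ingredient is an omission step interleaved with the Henkin stages. Enumerate tuples $\bar c^{\,0}, \bar c^{\,1}, \dotsc \in C^{|\bx|}$ with each appearing infinitely often. At stage $i+1$, after performing (P1)--(P3), I would isolate a finite subcollection $\sigma_i^* \subseteq \sigma_i$ of $\inf^{<\alpha}$-pairs involving $\bar c^{\,i}$ (and auxiliary constants $\bar d$) and form the projection
\[
\rho_i(\bx) \;=\; \inf_{\by}\, \max_{(\phi(\bar c^{\,i}, \bar d),\, q) \in \sigma_i^*} \bigl(\phi(\bx, \by) \ \dot-\ q\bone\bigr).
\]
The limit hypothesis on $\alpha$ keeps $\rho_i$ inside $(\Omega, \inf^{<\alpha})$, while realizability forces $\inf_{\bx} \rho_i^\tA(\bx) = 0$. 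Since $\Psi$ is not $(\Omega, \overline{\inf^{<\alpha}})$-supported, the contrapositive of Lemma~\ref{char-supported} yields a fixed $m_0$ and, for any prescribed $\delta > 0$, a tuple $\bx^* \in \tA^{|\bx|}$ with $\rho_i^\tA(\bx^*) < \delta$ together with some $(\psi, r_\psi) \in \interior{\Psi(\bx)}$ (whose representative formula we may take to be $\sup^{<\alpha}$) satisfying $\theta(\psi, r_\psi, 2^{-m_0})^\tA(\bx^*) > s_i$ for some rational $s_i > 0$. I would then add the pair $\bigl(s_i\bone \ \dot-\ \theta(\psi, r_\psi, 2^{-m_0})(\bar c^{\,i}),\ \eta_i\bigr)$ to $\sigma_{i+1}$ for a rational $\eta_i \in (0, s_i)$; realizability is restored via the assignment $\bar c^{\,i} \mapsto \bx^*$ together with the infimum-witness for $\rho_i$, and $\theta$ itself is $\inf^{<\alpha}$ since $\psi \in \sup^{<\alpha}$ and $\alpha$ is a limit ordinal.

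The resulting model $\tB$ from Theorem~\ref{model-construction} then satisfies $P_n^\tB([\bar c]) = 0$ on the dense set of Henkin classes (hence on all of $\tB$ by continuity of $P_n$), yielding $Q^\tB = 0$. For omission, suppose toward a contradiction that some $\bar b \in \tB^{|\bx|}$ realizes $\Psi$. Using density of Henkin classes (transferred from $d$ to $d_\Omega$ via continuity of $\Omega$ at the origin) pick a Henkin tuple $[\bar c^{\,i}]$ with $d_\Omega^\tB([\bar c^{\,i}], \bar b) < 2^{-m_0}$. The omission witness installed at stage $i+1$ forces $\theta(\psi, r_\psi, 2^{-m_0})^\tB([\bar c^{\,i}]) > 0$, meaning no point within $d_\Omega$-distance $2^{-m_0}$ of $[\bar c^{\,i}]$ has $\psi = r_\psi$; in particular $\psi^\tB(\bar b) \neq r_\psi$, contradicting that $(\psi - r_\psi) \in \Psi$ would be realized at $\bar b$.

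\textbf{Main obstacle.} The delicate point is the construction of $\rho_i$: $\sigma_i$ can accumulate pairs of high quantifier complexity from the Henkin handling of sup-formulas in the listing $F$, so one must carefully extract a subcollection $\sigma_i^*$ whose projection is a genuine $(\Omega, \inf^{<\alpha})$-description of the current constraint on $\bar c^{\,i}$ and verify this projection is tight enough for the non-support hypothesis to yield a usable omission witness. The limit-ordinal assumption on $\alpha$ is what allows the $\theta$-formulas added in earlier omission steps to be reused in subsequent projections without escaping the $\inf^{<\alpha}$ pool, ensuring the induction closes.
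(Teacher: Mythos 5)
Your architecture is the same as the paper's: a Henkin construction whose finite stages are kept satisfiable in $\tA$, with interleaved omission steps driven by the contrapositive of Lemma~\ref{char-supported} (fixed $m$), conditions forcing the approximants of each $P_n$ to vanish on Henkin tuples, and a final density argument showing no realizer of $\Psi$ survives. However, the step you yourself flag as the main obstacle is exactly where your argument has a genuine gap, and your proposed resolution does not close it. You assert that ``the limit hypothesis on $\alpha$ keeps $\rho_i$ inside $(\Omega,\inf^{<\alpha})$,'' but the limit hypothesis only controls quantifier rank. Being an $(\Omega,\inf^{<\alpha})$ formula also requires respecting $\Omega$ as a modulus of continuity, and the projection $\rho_i$ built from an arbitrary finite piece of $\sigma_i$ will in general not respect $\Omega$ (it only respects \emph{some} modulus $\Delta$). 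Since the non-supportedness hypothesis quantifies only over $\Omega$-respecting $\inf^{<\alpha}$ formulas, it cannot be applied to $\rho_i$ as written. The paper's fix is to pass to the $\Omega$-regularization
$$\psi(\bx)=\inf_{\bar z}\Bigl(\bigl(\textstyle\inf_{\by}\phi(\bar z,\by)\ \dot-\ s\bigr)+d_\Omega(\bx,\bar z)\Bigr),$$
apply non-supportedness to $\psi$, and then use the modulus $\Delta$ of the unregularized formula together with universality of $\Omega$ (small $d_\Omega$ forces small coordinatewise $d$) to transfer the witness $\bx^*$ back into an honest witness for the original conditions in $\sigma_i^*$. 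Without this regularize-and-transfer step the omission stage cannot be executed.

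A second, related slip: your $\rho_i$ subtracts the recorded bounds $q$ rather than the values actually achieved by the current assignment. From $\rho_i^\tA(\bx^*)<\delta$ you only conclude $\phi(\bx^*,\by)<q+\delta$, which does \emph{not} restore the strict conditions $(\phi,q)$ of \textbf{SAT}. You must first record $s<r^*$ with $\phi^\tA=s$ under the current assignment, subtract $s$ in the projection, and budget the gap $r^*-s$ against the two $\epsilon$-losses incurred in the regularization and transfer; this is precisely the $\epsilon=\tfrac{r^*-s}{3}$, $\delta\le\epsilon$, $\Delta(\delta,\dotsc,\delta)<\epsilon$ bookkeeping in the paper's proof. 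The remaining components of your proposal (the form of the omission pair, the complexity count for $\theta$, the $Q^\tB=0$ and final omission arguments) are sound and match the paper.
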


It is enough that $\Psi(\bx)$ be an $(\Omega,\overline{\sup^{<\alpha}})$ type such that $\overline{\interior{\Psi(\bx)}}=\Psi(\bx)$.

\begin{proof}
Write $P_n=\lim \phi_{n,m}$, where $||P_n-\phi_{n,m}||_{\tA}<2^{-m}$. Expand the vocabulary by adding a set $C$ of countably many constant functions, and let $\tau^*$ be the resulting vocabulary.\par

Our key tool follows from the contrapositive of \Cref{char-supported}: there is some psitive integer $m\geq 1$, such that for every  $(\Omega, \inf^{<\alpha})$ formula $\phi(\bx)$ and every $\delta>0$, either $\phi\geq \delta$ everywhere in $\tA$ or there is some $x\in \tA^{|\bx|}$ and some $(\psi,r_\psi)\in \interior{\Psi(\bx)}$ with
$\phi^\tA(\bx)<\delta$ but $\theta^{\tA}(\psi,r_\psi,2^{-m})(\bx)>0$. We fix such an $m$ for the rest of the construction.\par

We will build a set $\Gamma=\bigcup_{k<\omega}\sigma_k$, where $\sigma_k\subset\sigma_{k+1}$ is a finite set of pairs $(\phi, r)$, where $\phi$ is an $\inf^{<\alpha}$ $\tau$-formula, with all of its variables replaced by constants from $C$\footnote{We would like to say that $\phi$ is a $\tau^*$-sentence, as opposed to a $\tau$ formula with all of its variables replaced by constants from $C$. Indeed, this two notions are the same in discrete first-order and infinitary logic, even in finitary continuous logic. In infinitary continuous logic however, adding constants results in a strictly larger language (see Remark 2.4 of \cite{hallback}) with new formulas that we do not want to introduce into the construction.}, and $r\in (I_\phi\setminus\{\min I_\phi\})\cap\mathbb{Q}$. However, we will not require that $\phi$ satisfies $\Omega$ as a modulus of continuity. Moreover, we will guarantee that:

\begin{enumerate}
\item[\textbf{CON:}] $\Gamma$ satisfies the conditions (P1), (P2) and (P3) of \Cref{model-construction}. 
\item[\textbf{SAT:}] for every $k<\omega$, there is a variable assignment that makes $\sigma_k$ satisfiable in $\tA$. That is, we can interpret the constants in $\tA$ so that $\phi^\tA(\bar c)<r$ for all $(\phi(\bar c),r)\in\sigma_k$ under this interpretation.
\item[\textbf{REAL:}] for every tuple of constants $\bar c$ of length $|\bx_n|$, and every $n,m<\omega$, there is some $k<\omega$ such that $(\phi_{n,m}(\bar c), 2^{-m})\in \sigma_k$;
\item[\textbf{OM:}] for every tuple of constants $\bar c$ of length $|\bx|$, there is some $(\psi,r_\psi)\in\interior{\Psi}$, $t>0$ rational, and $k<\omega$ such that $(-\theta(\psi,r_\psi,\epsilon)(\bar c), -t)\in \sigma_k$.
\end{enumerate}
so that we can construct a separable metric $\tau$-structure $\tB$ from $\Gamma$ as in \Cref{model-construction}. Such $\tB$ satisfies $Q^\tB=0$ by construction and omits $\Psi(\bx)$. Indeed, the construction guarantees that the (equivalence classes) of Henkin constants are a countable dense set of the structure, and \textbf{OM} guarantees that no element of $\tB$ within $\epsilon$ of a Henkin constant realizes the type.\par

As for the construction, fix listings $\{t_i\}_{i\in\omega}$ of all the $\tau^*$ terms, $\{\bar c_i\}_{i<\omega}$ of $C^{<\omega}$ such that each element appears infinitely often.  Given $\sigma_k$, use \textbf{SAT} to see $\sigma_k$ as a set of conditions about elements of $\tA$. Then, for each of the finitely many conditions in $\sigma_k$, satisfy one step of (P1)-(P3) such that the resulting set is still satisfiable in $\tA$. For example, if $(\inf_n\psi_n, r)\in \sigma_k$,  then we add $(\psi_n, r)$ to $\sigma_{k+1}$, where $\psi_n$ is the least disjunct we haven't yet added. Since $\tA$ is a metric $\tau$-structure, this is always possible. Also, if $\bar c_k$ is of the right length, we extend the interpretation if needed to add $(\phi_{n,m}(\bar c_k), 2^{-m})$ to $\sigma_{k+1}$ for $n,m\leq k$. We call the resulting set of conditions $\sigma^*_k$.\par

Finally, if $\bar c_k$ has size $|\bar x|$, write $\sigma^*_k=\{(\phi_i(\bar c_k,\bar d),r_i)\}_{i<\ell}$ for some $\ell<\omega$, and where $\bar d$ is the finite set of all other constants mentioned. By scaling if necessary, the finite set of conditions is equivalent to an expression of the form $\phi(\bar c_k,\bar d)<r^*$, where $r^*=\min(r_0,\dotsc, r_{\ell-1})$. By \textbf{SAT}, there is a variable assignment and $s<r^*$ such that $\phi(\bar c_k,\bar d)^\tA=s<r^*$. Let $\epsilon= \frac{r^*-s}{3}$.\par

We want to consider the $\inf^{<\alpha}$ $\tau$-formula $\inf_{\by}\phi(\bx,\by)$. Although we cannot guarantee it satisfies $\Omega$, it does satisfy some modulus of continuity $\Delta$. Fix $\delta>0$ small enough such that $\delta\leq\epsilon$ and  $\Delta(\delta,\dotsc, \delta)<\epsilon$. Now, we focus our attention on the $(\Omega,\inf^{<\alpha})$ formula
$$\psi(\bx) = \inf_{\bar z} \left( \left(\inf_{\by} \phi(\bar z,\by)\ \dot- \ s \right) + d_\Omega(\bx,\bar z) \right)$$
and note that under the variable assignment for $\sigma^*_k$ we have $\psi^\tA(\bar c_k)=0<\delta$. By the contrapositive of \Cref{char-supported}, we can find some  $\bar a\in \tA^{|\bx|}$ such that  $\psi^\tA(\bar a)<\delta$ and $\theta(\psi,r_\psi,2^{-m})^{\tA}(\bar a)\geq t>0$.\par
By construction of $\psi$, there is some $\bar z\in \tA^{|\bx|}$ with $\inf_{\by} \phi^\tA(\bar z,\by)<s+\delta$ and $d_\Omega^\tA(\bar a, \bar z)<\delta$.  Since $\Omega$ is universal, $d(a_i,z_i)^\tA<\delta$ for $i<|\bx|$, which implies that 
$$\Delta(d^\tA(a_0,z_0),\dotsc, d^\tA(a_{|\bx|-1},z_{|\bx|-1}))\leq \Delta(\delta,\dotsc,\delta)<\epsilon.$$
Therefore, $\inf_{\by}\phi(\bar a,\by)^\tA<s+\delta+\epsilon\leq s+2\epsilon$. In turn, we can find some $\bar b\in \tA^{|\by|}$, such that $\phi(\bar a,\bar b)^\tA<s+3\epsilon=r^*$. Thus, by updating our variable assignment, $\sigma_{k+1} = \sigma^*_k\cup\{(-\theta(\psi,r_\psi,2^{-m})(\bar c_k)^{\tA},-t)\}$ is satisfiable in $\tA$ and guarantees \textbf{OM}.
\end{proof}

\section{Back and Forth}
The goal of this section is to begin the study of separable structures in continuous infinitary logic.  We introduce the notion of an $\omega$-presentation of a separable metric structure, give a definition of back-and-forth sets in this setting, and show that back-and-forth sets correspond to isomorphisms (when using a universal weak modulus).

\begin{remark}
Throughout this section, let $\tau$ be a countable continuous vocabulary and $L=L^\mathbb{R}_{\omega_1,\omega}(\tau)$. By Lemma 4.1 of \cite{benyaacov2017} we may assume that $\tau$ is a relational vocabulary.
\end{remark}

\begin{definition}
	\phantom{a}\par
	\begin{enumerate}
		\item We say that a metric $\tau$-structure $\tA$ is \textit{separable} if the metric space $(A,d^\tA)$ is separable.
		\item A sequence $\A= \{a_n\mid n\in\omega\}$ is countable tail-dense sequence in a metric space $(\tA,d)$ if for any $k\in\omega$, the sequence $\{a_n\mid n\geq k\}$ is dense in $(\tA,d$). We do this to correctly handle isolated points in the arguments that follow.
	 \end{enumerate}
 \end{definition}

 \begin{definition}\label{baf-definition}
Let $\mathcal{A}$ and $\mathcal{B}$ be countable tail-dense sequences of separable metric $\tau$-structures $\tA$ and $\tB$ respectively, $\Omega$ a universal modulus for $\tau$, and $t>0$. A \textit{bounded back-and-forth set with bound $t$} is a set $I\subset \mathcal{A}^{<\omega}\times\mathcal{B}^{<\omega}$ such that for $(\overline a,\overline b)\in I$ we have:
\begin{enumerate}
\item $\sup_\phi|\phi^\tA(\overline a)-\phi^\tB(\overline b)|<t$ where $\phi(\overline x)$ varies over all $|\bar a|=|\bar b|$-ary $\Omega$-quantifier free formulas plus all formulas from \Cref{Upower}.iv of the right length\footnote{We want to think of $d^\Omega$ as a quantifer-free formula, but it might only be a uniform limit of quantifer free formulas. However, to ease notation, we will treat it as such for the remainder of this paper};
\item For every $c\in \mathcal{A}$ there is a $d\in \mathcal{B}$ such that the index of $d$ in $\mathcal{B}$ is larger than all the indices in $\overline a,\overline b, c$ and $(\overline ac,\overline bd)\in I$;
\item For every $d\in \mathcal{B}$ there is a $c\in \mathcal{A}$  such that the index of $c$ in $\mathcal{A}$ is larger than all the indices in $\overline a,\overline b, d$ and $(\overline ac,\overline bd)\in I$.
\end{enumerate}
 \end{definition}
 
 \begin{lemma}
Let $\mathcal{A}$ and $\mathcal{B}$ be countable tail-dense sequences of separable metric $\tau$-structures $\tA$ and $\tB$ respectively, and $\Phi:\overline{\mathcal{A}}\cong_\tau\overline{\mathcal{B}}$ is a $\tau$-isomorphism between their completions. Then 
 $$I(\Omega,\Phi,t)=\{(\overline a,\overline b)\mid d_\Omega^\tB(\Phi(\overline a),\overline b)<t\}\subset \A^{<\omega}\times \B^{<\omega}$$ 
 is a bounded back-and-forth set for every  weak modulus $\Omega$, and every $t>0$.
 \end{lemma}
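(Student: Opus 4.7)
The plan is to check each of the three clauses of \Cref{baf-definition} directly from the fact that $\Phi$ is a $\tau$-isomorphism of the completions, together with the continuity properties of $\Omega$ and the tail-density of $\mathcal{A}$ and $\mathcal{B}$.

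For clause (1), fix $(\bar a,\bar b)\in I(\Omega,\Phi,t)$ of common length $n$ and let $\phi(\bar x)$ be any of the formulas described in the definition. All such $\phi$ respect $\Omega|_n$ as a modulus of continuity: this is automatic for quantifier-free $\Omega$-formulas, and for the formulas from \Cref{Upower}.iv it is built into the statement. Since $\Phi$ is a $\tau$-isomorphism of the completions $\overline{\mathcal{A}}$ and $\overline{\mathcal{B}}$, it preserves atomic values, hence preserves all quantifier-free formulas, so $\phi^{\tA}(\bar a)=\phi^{\tB}(\Phi(\bar a))$. The modulus bound then gives the uniform inequality
$$\sup_\phi|\phi^{\tA}(\bar a)-\phi^{\tB}(\bar b)|\;=\;\sup_\phi|\phi^{\tB}(\Phi(\bar a))-\phi^{\tB}(\bar b)|\;\le\; d_\Omega^{\tB}(\Phi(\bar a),\bar b)\;<\;t.$$

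For clause (2), fix $(\bar a,\bar b)\in I(\Omega,\Phi,t)$ of length $n$ and $c\in\mathcal{A}$. Using that the truncation satisfies $\Omega|_{n+1}(r_0,\dotsc,r_{n-1},0)=\Omega|_n(r_0,\dotsc,r_{n-1})$ and that $\Omega$ is continuous in each argument, the map
$$\delta\;\longmapsto\;\Omega|_{n+1}\bigl(d^{\tB}(\Phi(a_0),b_0),\dotsc,d^{\tB}(\Phi(a_{n-1}),b_{n-1}),\delta\bigr)$$
is continuous at $0$ with value $d_\Omega^{\tB}(\Phi(\bar a),\bar b)<t$. Pick $\delta>0$ small enough that this expression remains $<t$. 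Since $\Phi(c)\in\overline{\mathcal{B}}$ and $\mathcal{B}$ is tail-dense, we can choose $d\in\mathcal{B}$ whose index exceeds every index appearing in $\bar a$, $\bar b$, and $c$, and such that $d^{\tB}(\Phi(c),d)<\delta$. Then $d_\Omega^{\tB}(\Phi(\bar ac),\bar bd)<t$, and so $(\bar ac,\bar bd)\in I(\Omega,\Phi,t)$. Clause (3) is symmetric, playing the same game with $\Phi^{-1}$ and the tail-density of $\mathcal{A}$.

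The only delicate point is the continuity step in clause (2)/(3): one needs that the approximation in the new coordinate can be made small without disturbing the already-fixed coordinates. This is precisely what the continuity of $\Omega$ in each argument (part of the definition of a weak modulus) and the truncation identity $\Omega|_{n+1}(\bar r,0)=\Omega|_n(\bar r)$ deliver, so there is no real obstacle, only bookkeeping. Everything else is a direct transfer of values through the isomorphism $\Phi$.
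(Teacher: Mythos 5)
Your proof is correct and follows essentially the same route as the paper's: clause (1) by transferring values through $\Phi$ and then invoking the $\Omega$-modulus bound against $d_\Omega^{\tB}(\Phi(\bar a),\bar b)$, and clauses (2)–(3) by using the truncation identity $\Omega|_{n+1}(\bar r,0)=\Omega|_n(\bar r)$, continuity of $\Omega|_{n+1}$ to get a positive slack in the new coordinate, and tail-density to realize it with a constant of sufficiently large index. No gaps.
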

 
\begin{proof}
Suppose $(\overline a,\overline b)\in I$. Then for any $\Omega$-quantifier-free formula $\phi$ we have:
\begin{align*}
|\phi^\tA(\overline a)-\phi^\tB(\overline b)| &\leq |\phi^\tA(\overline a)-\phi^\tB(\Phi(\overline a))| + |\phi^\tB(\Phi(\overline a)) -\phi^\tB(\overline b)|\\
&\leq  0 + |\phi^\tB(\Phi(\overline a)) -\phi^\tB(\overline b)|\\
&\leq  d_\Omega^\tB(\Phi(\overline a),\overline b)\\
&< t
\end{align*}
Note that $|\phi^\tA(\overline a)-\phi^\tB(\Phi(\overline a))|=0$ since $\Phi$ is an isomorphism, and the second-to-last inequality comes from the fact that $\phi$ is an $\Omega$ formula. This shows (1) is satisfied.\par

We now show that (2) also holds, and note that (3) is analogous. Fix some $c\in\A$; let $n=|\overline a|=|\overline b|$, and $r_i=d^\tB(\Phi(a_i),b_i)$ for $i<n$. Then the assumption that $d_\Omega^\tB(\Phi(\overline a),\overline b)<t$ means exactly that
$$\Omega|_n((r_0,\dotsc, r_{n-1}) = \Omega(r_0,\dotsc, r_{n-1},0,0\dotsc)=\Omega|_{n+1}((r_0,\dotsc, r_{n-1},0) < t.$$
Since $\Omega|_{n+1}$ is continuous, there is some $r_n>0$ such that $\Omega|_{n+1}(r_0,\dotsc,r_n)<t.$
By tail-density of $\mathcal{B}$ in $\overline{\mathcal{B}}$, there is some $d\in\mathcal{B}$, whose index is larger than all indices we have seen so far, such that $d^{\overline{\mathcal{B}}}(\Phi(c),d)<r_n$. This implies that $d_\Omega^\tB(\Phi(\overline ac),\overline b d)<t$, so $(\overline ac,\overline bd)\in I$ and we are done.
\end{proof}

The following theorem is a generalization of \cite[Theorem 5.5]{benyaacov2017}. The proof that $\Phi$ is an isomorphism is largely the same as in \cite{benyaacov2017}:

 \begin{theorem}\label{baf-auto}
 Let $\mathcal{A}$ and $\mathcal{B}$ be countable tail-dense sequences of separable metric $\tau$-structures $\tA$ and $\tB$ respectively, $\Omega$ a universal modulus for $\tau$, and $t>0$. If $I\subset \mathcal{A}^{<\omega}\times\mathcal{B}^{<\omega}$ is a bounded back-and-forth set with bound $t$ and  $(\overline a,\overline b)\in I$,  with  $n_0=|\overline a|=|\overline b|$, then there is a $\tau$-isomorphism $\Phi:\overline{\mathcal{A}}\cong_\tau\overline{\mathcal{B}}$ between the completions such that $d_\Omega^\tB(\Phi(\overline a),\overline b)\leq t$.
 \end{theorem}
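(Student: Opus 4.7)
The plan is to execute an alternating back-and-forth construction inside $I$. Starting from $(\bar a_0,\bar b_0) = (\bar a,\bar b)$, I alternate the use of clauses (2) and (3) of \Cref{baf-definition}: at odd stages, add the $\mathcal{A}$-element of smallest index not yet in $\bar a_n$ and get a match from $\mathcal{B}$ via (2); at even stages, do the reverse using (3). Because each new pick has higher index than anything already mentioned, the resulting sequences $\bar a_\infty = (\alpha_0,\alpha_1,\ldots)$ and $\bar b_\infty = (\beta_0,\beta_1,\ldots)$ exhaust $\mathcal{A}$ and $\mathcal{B}$, the initial $n_0$ entries are $\bar a$ and $\bar b$, and every truncation $(\bar a_n,\bar b_n)$ lies in $I$. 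The resulting re-enumerations differ from the original tail-dense enumerations only by a finite permutation below any cutoff, so every tail $\{\alpha_j\}_{j\geq N_0}$ and $\{\beta_j\}_{j\geq N_0}$ remains dense in $\overline{\mathcal{A}}$ and $\overline{\mathcal{B}}$ respectively.

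The heart of the argument is to define $\Phi_0:\alpha_j\mapsto \beta_j$ on a high-index tail and show that it preserves atomic $\tau$-formulas \emph{exactly}, using multiplicative scaling to defeat the $t$ slack in clause (1). Given an atomic $\phi(x_0,\ldots,x_{k-1})$ and a rational $M>0$, \Cref{Upower}(iii) produces $N=N(M,\phi)$ such that $M\phi(x_{\sigma(0)},\ldots,x_{\sigma(k-1)})$ is a quantifier-free $\Omega$-formula whenever $\sigma$ is increasing with $\min\sigma>N$. For any high-index tuple $j_0<\cdots<j_{k-1}$ with $j_0>N$, at a stage $m>j_{k-1}$ I pad this formula with dummy variables at the end (which preserves respect for $\Omega$ by monotonicity of $\Omega|_m$), and clause (1) of the back-and-forth gives
\begin{equation*}
|M\phi^\tA(\alpha_{j_0},\ldots,\alpha_{j_{k-1}}) - M\phi^\tB(\beta_{j_0},\ldots,\beta_{j_{k-1}})| < t,
\end{equation*}
equivalently $|\phi^\tA-\phi^\tB|<t/M$. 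Letting $M\to\infty$ through rationals forces equality. Hence $\Phi_0$ restricted to any tail $\{\alpha_j\}_{j\geq N_0}$ is isometric and atomic-preserving on a dense subset of $\overline{\mathcal{A}}$, whose image is the dense subset $\{\beta_j\}_{j\geq N_0}$ of $\overline{\mathcal{B}}$, so it extends uniquely by uniform continuity to a $\tau$-isomorphism $\Phi:\overline{\mathcal{A}}\cong_\tau\overline{\mathcal{B}}$.

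For the bound $d_\Omega^\tB(\Phi(\bar a),\bar b)\leq t$, I would use the formula $d^{\Omega|_{n_0}}$ from \Cref{Upower}(iv), which respects $\Omega$ on any disjoint pair of ordered index-tuples. Given $\epsilon>0$, tail-density of $\mathcal{A}$ yields a high-index tuple $\bar\alpha'=(\alpha_{k_0},\ldots,\alpha_{k_{n_0-1}})$ with $k_0 > N_0$ and $d_\Omega^\tA(\bar a,\bar\alpha')<\epsilon$; set $\bar\beta'=(\beta_{k_0},\ldots,\beta_{k_{n_0-1}})$. Clause (1) applied at a stage $m>k_{n_0-1}$ to $d^{\Omega|_{n_0}}$ at positions $(0,\ldots,n_0-1)$ and $(k_0,\ldots,k_{n_0-1})$ gives $d_\Omega^\tB(\bar b,\bar\beta')<t+\epsilon$. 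The previous step already guarantees $\Phi(\bar\alpha')=\bar\beta'$ exactly (indices are high), and $\Phi$ is an isometry, so $d_\Omega^\tB(\Phi(\bar a),\bar\beta')=d_\Omega^\tA(\bar a,\bar\alpha')<\epsilon$. The triangle inequality then yields $d_\Omega^\tB(\Phi(\bar a),\bar b)<t+2\epsilon$, and $\epsilon\to 0$ closes the bound.

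The main obstacle I anticipate is the bookkeeping around the universal weak modulus: one must verify carefully that each scaled/shifted/padded formula really is a quantifier-free $\Omega$-formula of the correct arity for the stage at which it is invoked, and that the re-enumeration inherited from the back-and-forth preserves tail-density of the original sequences (which is what lets the dense-extension step go through). Once those technicalities are in place, multiplicative scaling converts the uniform-$t$ back-and-forth into an exact $\tau$-isomorphism in the limit, at the controlled cost of a $t$-perturbation on the distinguished initial tuple.
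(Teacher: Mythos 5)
Your overall strategy mirrors the paper's: run an alternating back-and-forth using clauses (2) and (3) of \Cref{baf-definition}, use the multiplicative scaling in \Cref{Upower}(iii) to squeeze the uniform slack $t$ below any $t/M$, and handle the bound on $(\bar a,\bar b)$ with the $d^{\Omega|_{n_0}}$-formulas from \Cref{Upower}(iv). The framing as a re-enumeration of $\mathcal{A}$ and $\mathcal{B}$ (rather than the paper's index functions $f,g$) is fine, and the density-of-tails argument goes through since a cofinite subset of a tail-dense sequence contains a genuine tail.

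However, there is a genuine gap in the central step. You claim that for a \emph{fixed} high-index tuple $j_0<\cdots<j_{k-1}$, letting $M\to\infty$ forces $\phi^\tA(\alpha_{j_0},\ldots)=\phi^\tB(\beta_{j_0},\ldots)$, so that $\Phi_0$ is exactly atomic-preserving on a fixed tail $\{\alpha_j\}_{j\geq N_0}$. But the threshold $N(M,\phi)$ in \Cref{Upower}(iii) depends on $M$, and in general grows without bound as $M\to\infty$ (this is already visible in the universal-modulus construction, where larger scalars require pushing further out in the weak modulus). For a fixed tuple with $\min = j_0$, only finitely many rational $M$ satisfy $N(M,\phi)<j_0$, so your estimate bottoms out at a fixed positive discrepancy (roughly $t$ divided by the largest admissible $M$), not zero. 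Consequently $\Phi_0$ restricted to a tail is not isometric or atomic-preserving, only ``asymptotically'' so as the index grows, and the clean ``extend a dense isometry by uniform continuity'' step does not literally apply.

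The fix is what the paper does: do not try to define the map exactly on a fixed dense subset. Instead, show that if $\{a_{m_k}\}$ is Cauchy in $\tA$ then the matched $\{b_{f(m_k)}\}$ is Cauchy in $\tB$ (by choosing $M$ first, then $N(M,d)$, then looking only at indices beyond it), and define $\Phi$ on $\overline{\mathcal{A}}$ via these limits. The exact preservation of atomics then holds \emph{in the limit}, because for each $\epsilon$ you may fix $M$ with $t/M<\epsilon/3$, set $N_1=N(M,\phi)$, and then pick approximating indices above $N_1$. Surjectivity also needs its own argument (in the paper, via the composite $f\circ g$), since with the corrected formulation you no longer have an exact dense bijection to pull it off for free. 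Your final paragraph deriving $d_\Omega^\tB(\Phi(\bar a),\bar b)\leq t$ via \Cref{Upower}(iv) and the triangle inequality is sound once $\Phi$ has been correctly constructed as above.
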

 
\begin{proof}
We construct two increasing functions $f,g:\omega\to\omega$ as follows: 
\begin{itemize}
\item Let $\overline a^{-1} = \overline a$ and $\overline b^{-1}=\overline b$. 
\item Stage $2s$: Given $(\overline a^{2s-1},\overline b^{2s-1})\in I$, we apply the forth property to $a_s\in \mathcal{A}$, obtaining a $d\in\mathcal{B}$ such that $(\overline a^{2s-1}a_s,\overline b^{2s-1}d)\in I$ and the index of $d$ in $\mathcal{B}$ is larger than all numbers that have appeared so far in the construction. Let $\overline a^{2s}=a^{2s-1}a_s$, and $\overline b^{2s}=\overline b^{2s-1}d$. Also, let $f(s)$ be the index of $d$ in $\mathcal{B}$. The assumption on the index of $d$ makes $f$ increasing.
\item Stage $2s+1$:  Given $(\overline a^{2s},\overline b^{2s})\in I$, we apply the forth property to $b_s\in \mathcal{B}$, obtaining a $c\in\mathcal{A}$ such that $(\overline a^{2s}c,\overline b^{2s}b_s)\in I$ and the index of $c$ in $\mathcal{A}$ is larger than all numbers that have appeared so far in the construction. Let $\overline a^{2s+1}=a^{2s}c$, and $\overline b^{2s+1}=\overline b^{2s}b_s$. Also, let $g(s)$ be the index of $c$ in $\mathcal{A}$. The assumption on the index of $c$ makes $g$ increasing.
\end{itemize}

We can now define $\Phi: \overline{\mathcal{A}}\to \overline{\mathcal{B}}$ as follows: $\Phi(a)=b$ if there is an increasing sequence $m_k$ such that $\lim_k a_{m_k}=a$ and $\lim_k b_{f(m_k)}=b$, and we proceed to verify this is actually a $\tau$-isomorphism:

\begin{claim}\label{baf-1}
If $m_k$ is an increasing sequence such that $\left\{ a_{m_k} \right\}_k$ is Cauchy in $\tA$, then $\left\{ b_{f(m_k)} \right\}_k$ is Cauchy in $\tB$
\end{claim}

\begin{proof}
Fix $\epsilon>0$, and let $M>0$ be such that $\frac{t}{M}<\frac{\epsilon}{2}$. Apply Lemma \ref{Upower} (iii) to find $N_1=N(M,d(x_0,x_1))$, and let $N_2$ be such that if $k>l>N_2$ then $d(a_{m_k},a_{m_l})<\frac{\epsilon}{2}$. \par
Take $k>l>\max\{N_1,N_2\}$ then $n_0+2m_k>n_0+2m_l>N_1$, so 
$$\phi(x_0,\dotsc, x_{n_0+2m_k}) = Md(x_{n_0+2m_l},x_{n_0+2m_k})$$
is an $\Omega$-quantifier-free-formula. As $(\overline a^{2m_k},\overline b^{2m_k})\in I$, we must have that
$$|\phi^\tA(\overline a^{2m_k})-\phi^\tB(\overline b^{2m_k})|<t$$
so
$$|d^\tA(a_{m_l},a_{m_k}) - d^\tB(b_{f(m_l)},b_{f(m_k)})|<\frac{t}{M}<\frac{\epsilon}{2}$$
and since $d^\tA(a_{m_k},a_{m_l})<\frac{\epsilon}{2}$, we obtain that $d^\tB(b_{f(m_l)},b_{f(m_k)})<\epsilon$.
\end{proof}

\begin{claim}\label{baf-2}
If $m_k$ and $n_k$ are increasing sequences such that $\lim_k a_{m_k}=\lim_k a_{n_k}$, then $\lim_k b_{f(m_k)}=\lim_k b_{f(n_k)}$. In particular, $\Phi$ is well-defined.
\end{claim}

\begin{proof}
The proof is the same as in the previous claim. The only difference being that we choose $N_2$ such that if $k>N_2$, then $d(a_{m_k},a_{n_k})<\frac{\epsilon}{2}$.
\end{proof}

\begin{claim}\label{baf-3}
If $\phi(\overline x)$ is an atomic formula and $\overline c\in \overline{\mathcal{A}}^{|\overline x|}$, then $\phi^{\overline{\mathcal{A}}}(\overline c)=\phi^{\overline{\mathcal{B}}}(\Phi(\overline c))$. In particular, $\Phi$ is an isometry, thus injective, and an elementary embedding.
\end{claim}

\begin{proof}
Fix $\epsilon>0$, let $n= |\overline x|$, and write $\overline c = (c_0,\dotsc, c_{n-1})$. Let $M>0$ be such that $\frac{t}{M}<\frac{\epsilon}{3}$. Apply Lemma \ref{Upower} (iii) to find $N_1=N(M,\phi(\overline x))$. Also, fix increasing sequences $\{a_{m^i_k}\mid k\in\omega\}\subset \A$ such that $\lim_{k\to\infty} a_{m^i_k}=c_i$ for $i<n$, and observe that from the previous claims we have that $\lim_{k\to\infty} b_{f(m^i_k)} = \Phi(c_i)$. Finally, using continuity of $\Omega|_n$, let $N_2$ be such that if $k>N_2$ then 
$$\Omega|_n(d^\tA(a_{m^i_k},c_i)\mid i<n)<\frac{\epsilon}{3}$$
and 
$$\Omega|_n(d^\tB(b_{f(m^i_k)},\Phi(c_i))\mid i<n)<\frac{\epsilon}{3}.$$

Fix $\sigma\in\mathbb{N}^n$ such that $\min\sigma > \max\{N_1,N_2\}$ and let $s>\max_{i<n} n_0+2m^i_{\sigma(i)}$. Then
$$\psi(x_0,\dotsc, x_{s}) = M\phi(x_{n_0+2m^0_{\sigma(0)}},\dotsc, x_{n_0+m^{n-1}_{\sigma(n-1)}})$$
is an $\Omega$-formula (even after adding dummy variables with larger indexes). Since $(\overline a^s,\overline b^s)\in I$, we have
$$|\psi^\tA(\overline a^s) - \psi^\tB(\overline b^s)|<t$$
so
$$|\phi^\tA(a_{m^0_{\sigma(0)}},\dotsc,a_{m^{n-1}_{\sigma(n-1)}}) - \phi^\tB(b_{m^0_{f(\sigma(0))}},\dotsc,b_{m^{n-1}_{f(\sigma(n-1))}})|<\frac{\epsilon}{3}.$$

To finish the proof, observe that
\begin{align*}
|\phi^\tA(\overline c)-\phi^\tB(\Phi(\overline c))| &\leq |\phi^\tA(\overline c)-\phi^\tA(a_{m^0_{\sigma(0)}},\dotsc,a_{m_{\sigma(n-1)}^{n-1}})|\\
&+ |\phi^\tA(a_{m^0_{\sigma(0)}},\dotsc,a_{m_{\sigma(n-1)}^{n-1}})-\phi^\tB(b_{f(m^0_{\sigma(0)})},\dotsc,b_{f(m_{\sigma(n-1)}^{n-1})})|\\
&+ |\phi^\tB(b_{f(m^0_{\sigma(0)})},\dotsc,b_{f(m_{\sigma(n-1)}^{n-1})})-\phi^\tB(\Phi(\overline c))|
\end{align*}
where we have already shown that the second term in the right-hand side is less that $\frac{\epsilon}{3}$ (the value of the formula does not change when going to the completion). For the first and third term, observe that $\phi$ is an $\Omega$ formula, so in particular
$$|\phi^\tA(\overline c)-\phi^\tA(a_{m^0_{\sigma(0)}},\dotsc,a_{m_{\sigma(n-1)}^{n-1}})| \leq \Omega|_n(d^\tA(a_{m^i_k},c_i)\mid i<n)<\frac{\epsilon}{3}$$
by our assumption above. The same argument shows that the third term is also less than $\frac{\epsilon}{3}$. Thus, the left-hand side is less than $\epsilon$ and we are done.
\end{proof}

Note these statements still hold if we replace $f$ by $g$ everywhere. This will be important to show surjectivity:

\begin{claim}
Fix $b\in\tB$, If $m_k$ is an increasing sequence such that $\lim_k b_{m_k}=b$, then $\lim_k b_{f(g(m_k))}=b$. In particular, if $a=\lim_k a_{g(m_k)}$, then $\Phi(a)=b$. 
\end{claim}

\begin{proof}
Assume $b$ and $m_k$ are as above. By \Cref{baf-1} for $g$, we have that $\{a_{g(m_k)}\}_k$ is Cauchy in $\mathcal{A}$, and then using \Cref{baf-1} again, but this time for $f$, we get that $\{b_{f(g(m_k))}\}_k$ is Cauchy in $\mathcal{B}$. Thus, it suffices to show that $\lim_{k\to\infty} d^\mathcal{B}(b_{m_k}, b_{f(g(m_k))}) = 0$.\par

Returning to the construction of $f$ and $g$, note the following: at stage $2m_k+1$, we extended the pair $(\overline a^{2m_k},  \overline b^{2m_k})$ by adding $b_{m_k}$ on the $\mathcal{B}$ side, and $a_{g(m_k)}$ to the $\mathcal{A}$ (or rather, we defined $g(m_k)$ to be the index that makes this true). In particular,   $b_{m_k}$ and $a_{g(m_k)}$ appear at coordinate $n_0+2m_k+1$ and $g(m_k)>m_k$. Later, at stage $2g(m_k)$, we extended the sequence by adding  $a_{g(m_k)}$ to the $\mathcal{A}$ and $b_{f(g(m_k))}$ to the $\mathcal{B}$ side, so they both appear at coordinate $n_0+g(m_k)$.\par

Now, fix $\epsilon>0$, and let $M>0$ be such that $\frac{t}{M}<\epsilon$. Use \Cref{Upower} (iii) to find $N=N(M,d(x_0,x_1))$ and let $k>N$. Then $\phi(x_0,\dotsc, x_{n_0+2g(m_k)}) = Md(x_{n_0+2m_k+1}, x_{2g(m_k)})$ is an $\Omega$-formula and therefore
$$|\phi^\tA(\overline a^{2g(m_k)}) -  \phi^\tB(\overline b^{2g(m_k)})| < t$$
so we have that
$$| d^\tA(a_{g(m_k)},a_{g(m_k)}) - d^\tB(b_{m_k},b_{f(g(m_k))})| <\epsilon$$
by our analysis in the previous paragraph. Clearly, the distance on the $\tA$ side is zero, so $d^\tB(b_{m_k},b_{f(g(m_k))}) <\epsilon$ and we are done. 
\end{proof}

\begin{claim}
$d_\Omega^\tB(\Phi(\overline a),\overline b)\leq t$
\end{claim}
\begin{proof}
Write $\overline a = (c_0,\dotsc, c_{n_0-1})$ and $\overline b = (d_0,\dotsc, d_{n_0-1})$. We use $c$ and $d$ to avoid confusion with the following part: fix increasing sequences $\{a_{m^i_k}\mid k\in\omega\}\subset \A$ such that $\lim_{k\to\infty} a_{m^i_k}=c_i$ for $i<n$, and such that
$$k<m^0_k<\dotsc<m^{n_0-1}_k.$$
By \Cref{baf-1} we get that $\lim_{k\to\infty} b_{f(m^i_k)} = \Phi(c_i)$ and $\Phi(\overline a) = (\Phi(c_0),\dotsc, \Phi(c_{n_0-1}))$. By \Cref{Upower} (iv),  
$$d_\Omega^\tB((x_0,\dotsc, x_{n_0-1}), (x_{n_0+m^0_k},\dotsc, x_{n_0+m^{n_0-1}_k}))$$ 
is an $\Omega$-formula, and considered party of the the back-and-forth property by assumption. Thus, we get
$$|d_\Omega^\tA(\overline a, (a_{m^0_k},\dotsc, a_{m^{n_0-1}_k})) - d_\Omega^\tB(\overline b, (b_{f\left(m^0_k \right)},\dotsc, b_{f\left(m^{n_0-1}_k\right)}))|<t.$$
Letting $k$ go to infinity, the first term vanishes by the assumption on the sequences, and the second term approaches $d_\Omega^\tB(\overline b, \Phi(\overline a))$. Thus, $d_\Omega^\tB(\overline b, \Phi(\overline a))\leq t$ and we are done.
\end{proof}
This completes the proof. 
\end{proof}
 
\begin{proposition}\label{elem-baf}
Let $\Omega$ be a universal weak-modulus for $\tau$, $\mathcal{A}$ and $\mathcal{B}$ be countable tail-dense sequences of separable metric $\tau$-structures $\tA$ and $\tB$ respectively, and $t>0$. If $\tA$ and $\tB$ satisfy the same $\Omega$-sentences; then the set 
$$I(\Omega, t)=\{(\overline a,\overline b) \mid  \sup_\phi |\phi^\mathcal{A}(\overline a)-\phi^\mathcal{B}(\overline b)| <t\}\subset \A^{<\omega}\times \mathcal{B}^{<\omega},$$ 
where $\phi$ ranges over all $\Omega$-formulas of the appropriate length and without any additional parameters, is nonempty and is a bounded back-and-forth set.
\end{proposition}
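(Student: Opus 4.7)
For non-emptiness, the empty-tuple pair $(\emptyset,\emptyset)$ lies in $I(\Omega,t)$: the supremum in the definition, now taken over $\Omega$-sentences, is $0$ by hypothesis that $\tA$ and $\tB$ satisfy the same $\Omega$-sentences, which is $<t$. Condition (1) of \Cref{baf-definition} is immediate from the definition: every $\Omega$-quantifier-free formula, and every $d_{\Omega|_n}$-formula of \Cref{Upower}(iv), is an $\Omega$-formula, so the sup over these is bounded by the sup defining $I(\Omega,t)$.

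For the forth property (condition (2); condition (3) is symmetric), fix $(\bar a,\bar b)\in I(\Omega,t)$ with $s:=\sup_\phi|\phi^\tA(\bar a)-\phi^\tB(\bar b)|<t$, and set $\eta=(t-s)/3>0$. Given $c\in\mathcal{A}$, my plan is first to produce $d^\ast\in\tB$ with
$$\sup_\psi\bigl|\psi^\tA(\bar a,c)-\psi^\tB(\bar b,d^\ast)\bigr|\le s+\eta,$$
where $\psi$ ranges over $(|\bar a|+1)$-ary $\Omega$-formulas, and then to invoke the tail-density of $\mathcal{B}$ in $\tB$ together with the universal modulus $\Omega|_{|\bar a|+1}$ acting on the $y$-coordinate to pick $d\in\mathcal{B}$ of sufficiently large index with $d^\tB(d^\ast,d)$ so small that every $\psi^\tB(\bar b,\cdot)$ shifts by at most $\eta$; this yields $(\bar ac,\bar bd)\in I(\Omega,t)$.

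The finite version of the existence of $d^\ast$ is direct. Given $\psi_1,\ldots,\psi_n$ and rationals $q_i$ within $\eta/3$ of $\psi_i^\tA(\bar a,c)$, the formula $\theta(\bar x):=\inf_y\max_i|\psi_i(\bar x,y)-q_i|$ is itself an $\Omega$-formula, and $\theta^\tA(\bar a)<\eta/3$ (take $y=c$). Since $(\bar a,\bar b)\in I(\Omega,t)$ we get $\theta^\tB(\bar b)<s+\eta/3$, producing $d\in\tB$ nearly realising the infimum and satisfying $|\psi_i^\tB(\bar b,d)-\psi_i^\tA(\bar a,c)|<s+\eta$ for every $i$.

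The main obstacle is upgrading this finite-satisfiability step to a single $d^\ast$ that works for every $(|\bar a|+1)$-ary $\Omega$-formula at once. My plan is to exploit the infinitary clause (7) of \Cref{the-language}: enumerate a countable family $\{\psi_m\}_{m<\omega}$ of $\Omega$-formulas together with matching rationals $q_m$ approximating $\psi_m^\tA(\bar a,c)$, and form the single $\Omega$-formula $\Theta(\bar x):=\inf_y\sup_m|\psi_m(\bar x,y)-q_m|$, which is legitimate by clause (7) because the $\psi_m$ share the modulus $\Omega|_{|\bar a|+1}$ and a common compact bound (obtained by rescaling via \Cref{Udef}(ii) and the reindexing in \Cref{Upower}(iii)). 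Applying $(\bar a,\bar b)\in I(\Omega,t)$ to $\Theta$ then produces a single $d^\ast\in\tB$ meeting the bound simultaneously for the entire countable family. The genuine difficulty is arranging the countable family to control the uncountable sup over all $(|\bar a|+1)$-ary $\Omega$-formulas: the naive idea of a sup-norm dense subset fails in general, because $\tA^{|\bar a|+1}$ need not be compact and bounded $\Omega$-Lipschitz function spaces on non-compact separable spaces need not be separable. My plan to get around this is to choose $\{\psi_m\}$ dense in values on the countable dense sequence $\mathcal{A}$ and then use the shared modulus $\Omega|_{|\bar a|+1}$ to extend uniformly, an equicontinuity-plus-density argument that is the technical heart of the proof.
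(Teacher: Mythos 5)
Your proposal handles nonemptiness and condition (1) correctly, and the finite-satisfiability step is sound. But there is a genuine gap at exactly the point you flag as the technical heart: the equicontinuity-plus-density argument intended to produce a single countable family $\{\psi_m\}$ controlling all $(|\bar a|+1)$-ary $\Omega$-formulas does not work. Pointwise control on a countable dense subset together with equicontinuity does not yield uniform control on a non-compact separable space. Concretely, on the unit ball of $\ell^2$ the $1$-Lipschitz functions $f_n(x)=d(x,e_n)$ converge pointwise to the constant $\sqrt{2}$ on the dense set $\{e_m\}$, yet $\sup_x|f_n(x)-\sqrt{2}|=\sqrt{2}$ for every $n$. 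So the obstruction you identified yourself --- that bounded $\Omega|_{|\bar a|+1}$-Lipschitz functions on $\tB^{|\bar a|+1}$ need not form a separable space in the sup norm --- is not circumvented by restricting attention to a countable dense sequence, and your plan does not close.

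The paper avoids this problem altogether by arguing by contradiction and exploiting the countability of $\mathcal{B}$ rather than any density property of formulas. If the forth property fails for some $c\in\mathcal{A}$, then for each of the countably many $d\in\mathcal{B}$ of sufficiently large index there is an $\Omega$-formula $\phi_{c,d}(\bar x,y)$ with $\phi_{c,d}^{\tA}(\bar a,c)=0$ and $\phi_{c,d}^{\tB}(\bar b,d)\geq t$ (after shifting and truncating into a common compact bound, which preserves the modulus). The single $\Omega$-formula
$$\psi(\bar x) = \inf_{y}\ \sup_{d\in\mathcal{B}} \phi_{c,d}(\bar x,y),$$
whose inner supremum is a legitimate countable sup precisely because $\mathcal{B}$ is countable, then satisfies $\psi^{\tA}(\bar a)=0$ (witnessed by $y=c$) and $\psi^{\tB}(\bar b)\geq t$ (using tail-density of $\mathcal{B}$), contradicting $(\bar a,\bar b)\in I(\Omega,t)$. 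In short, the countable family you were seeking should be indexed by the countable set $\mathcal{B}$ of candidate witnesses, one formula per $d$, rather than by an (in general nonexistent) sup-norm-dense family of $\Omega$-formulas; this reindexing, and the passage to a contradiction, is what makes the argument go through with no separability hypothesis on the space of formulas.
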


\begin{proof}
Fix $(\overline a,\overline b)\in I$, and note that condition (1) is trivial. The assumptions about $\mathcal{A}$ and $\mathcal{B}$ guarantee that $(\langle\rangle, \langle\rangle)\in I$.\par
Suppose for contradiction that (2) doesn't hold; then for some $c\in\A$ and every $d\in \mathcal{B}$, there is an $\Omega$-formula $\phi_{c,d}(\overline x,y)$ such that
$$|\phi^\tA_{c,d}(\overline a, c) - \phi^\tA_{c,d}(\overline b,d)| \geq t.$$
Without loss of generality, $\phi^\tA_{c,d}(\overline a,c)=0$ and  $\phi^\tB_{c,d}(\overline b,d)\geq t$. Then consider the $\Omega$-formula
$$\psi(\overline x) = \inf_{y}\sup_{d\in\mathcal{B}} \phi_{c,d}(\overline x,y)$$
and note that the second quantifier is countable since $\mathcal{B}$ is countable. By construction, $\psi(\overline a)^\tA=0$, with $y=c$, but $\psi(\overline b)^tB\geq t$. Thus, $(\overline a,\overline b)\not\in I$, a contradiction. Note that condition (3) is analogous.
\end{proof}

\begin{remark}
The reader might be tempted to instead consider the set 
$$J(\Omega, t)=\{(\overline a,\overline b) \mid  \sup_\phi |\phi^{(\mathcal{A},\overline a)}-\phi^{(\mathcal{B},\overline b)}| <t\}\subset \A^{<\omega}\times \mathcal{B}^{<\omega},$$ 
where $\phi$ varies over all sentences in the language expanded by the requisite number of constants. However, this is not necessarily a back-and-forth set. The reasoning is expanded in Remark 2.4 of \cite{hallback}, but in essence it is possible for $\operatorname{tp}_\A(\overline a)=\operatorname{tp}_\mathcal{B}(\overline b)$, while $(\A,\overline a)\not\equiv_\tau (\mathcal{B}, \overline b)$ once we add constants naming the parameters. 
\end{remark}

 \begin{corollary}\label{elem-to-auto}
Let $\Omega$ be an universal weak-modulus for $\tau$. If  $\tA$ and $\tB$ are separable metric $\tau$-structures that satisfy the same $\Omega$-sentences, then they are isomorphic (as $\tau$-structures). In particular, they are isometric.
 \end{corollary}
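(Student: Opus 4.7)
The plan is to combine \Cref{elem-baf} with \Cref{baf-auto} in a nearly mechanical way. First, using separability of $\tA$ and $\tB$, I would fix countable dense sequences and modify them to be tail-dense: for instance, given a countable dense sequence $\{c_n\}\subset \tA$, the reindexed sequence obtained by listing each $c_n$ infinitely often (say along a bijection $\omega\cong\omega\times\omega$) produces a countable tail-dense sequence $\mathcal{A}$ in $\tA$. Do the same for $\tB$ to obtain $\mathcal{B}$.

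Next, fix any $t>0$. Since $\tA$ and $\tB$ satisfy the same $\Omega$-sentences, every closed $\Omega$-sentence $\phi$ has $|\phi^{\tA}-\phi^{\tB}|=0<t$, so the empty pair $(\langle\rangle,\langle\rangle)$ lies in the set $I(\Omega,t)$ of \Cref{elem-baf}. By \Cref{elem-baf}, this set is a bounded back-and-forth set with bound $t$ between $\mathcal{A}$ and $\mathcal{B}$.

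Now apply \Cref{baf-auto} to $I(\Omega,t)$ and the pair $(\langle\rangle,\langle\rangle)$: this yields a $\tau$-isomorphism $\Phi: \overline{\mathcal{A}}\cong_\tau \overline{\mathcal{B}}$. Since $\mathcal{A}$ is dense in $\tA$ and $\tA$ is complete, $\overline{\mathcal{A}}=\tA$, and analogously $\overline{\mathcal{B}}=\tB$. Thus $\Phi:\tA\cong_\tau\tB$. The final ``in particular'' clause is immediate: as $d\in\tau$, the isomorphism $\Phi$ preserves the interpretation of $d$ and hence is an isometry of the underlying metric spaces.

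The only real content beyond invoking the two prior results is the tail-density adjustment, which is routine, and the identification $\overline{\mathcal{A}}=\tA$ coming from completeness of metric $\tau$-structures; I do not anticipate any obstacle.
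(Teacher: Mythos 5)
Your proposal is correct and is exactly the argument the paper intends (the corollary is left without an explicit proof precisely because it follows immediately from \Cref{elem-baf} applied to the empty pair together with \Cref{baf-auto}). The tail-density adjustment and the identification $\overline{\mathcal{A}}=\tA$ via completeness are the right routine details to supply.
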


\section{Definability, Scott Ranks, and Scott Predicates}
We now use back-and-forth sets to show that the closures of automorphism orbits in separable metric structure are definable, and in turn use this fact to construct Scott sentences, and Scott predicates, for separable metric structures.\par
Although the following results are true for sentences, and thus true for predicates, we will see in the next section that working with predicates gives us a correct and robust notion of Scott rank.

 \begin{lemma}\label{closures-same-vals}
Let $\Omega$ be a universal weak-modulus for $\tau$, $\tA$ be a separable metric $\tau$-structure, and  $\overline a,\overline b\in \tA^{n}$ for some $n<\omega$. Then $\overline b\in\overline{Aut_\tA(\overline a)}$ if, and only if, $\phi^\A(\overline a)=\phi^\A(\overline b)$ for all $\Omega$-formulas without parameters. 
 \end{lemma}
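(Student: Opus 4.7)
The forward direction is the easy one: the plan is to note that every $\Omega$-formula $\phi(\bar x)$ is interpreted in $\tA$ as a uniformly continuous function, and that for any automorphism $\sigma$ of $\tA$, the value $\phi^\tA(\sigma(\bar a))$ equals $\phi^\tA(\bar a)$ by a straightforward induction on the complexity of $\phi$. So if $\bar b$ is a metric limit of a sequence $\sigma_k(\bar a)$ with $\sigma_k\in\operatorname{Aut}(\tA)$, then $\phi^\tA(\bar b)=\lim_k \phi^\tA(\sigma_k(\bar a))=\phi^\tA(\bar a)$.

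For the converse, the strategy is to leverage \Cref{baf-auto} to produce, for each $t>0$, an automorphism $\Phi_t$ of $\tA$ with $d^\tA_\Omega(\Phi_t(\bar a),\bar b)\leq t$; then \Cref{Upower}(iv) forces $\max_i d^\tA(\Phi_t(a_i),b_i)\leq t$, so letting $t\to 0$ exhibits $\bar b$ as a limit of elements of the orbit. Concretely, I would fix a countable tail-dense sequence $\mathcal{A}$ of $\tA$ whose first $n$ entries are $\bar a$, and another such sequence $\mathcal{B}$ of $\tA$ whose first $n$ entries are $\bar b$. For each $t>0$, define
$$I(\Omega,t) = \{(\bar c,\bar d)\in \mathcal{A}^{<\omega}\times \mathcal{B}^{<\omega} \mid |\bar c|=|\bar d|\text{ and } \sup_\phi|\phi^\tA(\bar c)-\phi^\tA(\bar d)|<t\},$$
with $\phi$ ranging over parameter-free $\Omega$-formulas of the correct arity together with the formulas of \Cref{Upower}(iv).

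The pair $(\bar a,\bar b)$ lies in $I(\Omega,t)$ for every $t>0$ by hypothesis, so $I(\Omega,t)$ is nonempty. The back-and-forth clauses (2) and (3) of \Cref{baf-definition} are proved exactly as in \Cref{elem-baf}: if the forth step failed at some $(\bar c,\bar d)\in I(\Omega,t)$ for a new element $c\in\mathcal{A}$, then, since $\mathcal{B}$ is countable, one could take the countable infimum-of-suprema formula
$$\psi(\bar x) = \inf_y \sup_{d\in\mathcal{B}} \phi_{c,d}(\bar x,y),$$
which is an $\Omega$-formula, and derive a contradiction with $(\bar c,\bar d)\in I(\Omega,t)$. (The sentence-versus-formula distinction in \Cref{elem-baf} is inessential here because we have a witnessing pair in $I(\Omega,t)$ by assumption, not through a sentence-level elementarity argument.) Clause (1) is immediate from the definition of $I(\Omega,t)$.

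Then \Cref{baf-auto} produces, for every $t>0$, an automorphism $\Phi_t:\tA\to\tA$ (the completion of $\tA$ is $\tA$ itself, since it is already complete) with $d^\tA_\Omega(\Phi_t(\bar a),\bar b)\leq t$, and \Cref{Upower}(iv) yields the coordinate-wise bound. The main technical point to verify carefully will be the construction of the countable disjunction $\psi$ above in the back-and-forth argument, specifically checking that the collection $\{\phi_{c,d}\}_{d\in\mathcal{B}}$ shares a common modulus of continuity and bound so that $\sup_{d\in\mathcal{B}}\phi_{c,d}$ really is an $\Omega$-formula of the right form; this is automatic because all the $\phi_{c,d}$ are parameter-free and the set of $\Omega$-formulas of a fixed arity is closed under countable suprema by design.
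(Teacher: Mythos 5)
Your proof is correct and follows essentially the same route as the paper: the forward direction uses automorphism-invariance of formula values plus (uniform) continuity, and the converse applies \Cref{baf-auto} to the back-and-forth set $I(\Omega,t)$ containing $(\bar a,\bar b)$ to produce automorphisms moving $\bar a$ arbitrarily close to $\bar b$. The only (harmless) divergence is that you re-derive the back-and-forth clauses for $I(\Omega,t)$ from scratch, whereas the paper simply invokes \Cref{elem-baf} with $\mathcal{B}=\mathcal{A}$, the sentence hypothesis being trivially satisfied since both sides are the same structure.
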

 
 \begin{proof}
Assume $\overline b\in\overline{Aut_\tA(\overline a)}$, and fix $\epsilon>0$. By continuity of $\Omega|_{n}$ under the sup norm, there is some $\delta>0$ such that $\lVert\overline r\rVert_\infty<\delta$, then $\Omega|_n(\overline r)<\epsilon$. By assumption, we can find $\overline c\in Aut_\tA(\overline a)$ such that $d^\tA_n(\overline b,\overline c)<\delta$, so $d_\Omega^\tA(\overline b,\overline c)<\epsilon$ as $\Omega|_n$ is non-decreasing. This implies that $|\phi^\A(\overline a)-\phi^\A(\overline b)|<\epsilon$ for any $\Omega$ formula $\phi$, and the result follows.\par
 
For the other direction: fix $t>0$ and a tail dense sequence $\A$ of $\tA$ containing $\overline a$ and $\overline b$. Let $I(\Omega,t)$ be as in \Cref{elem-baf} with $\mathcal{B}=\mathcal{A}$. By assumption, $(\overline a,\overline b)\in I(\Omega,t)$. Therefore, by \Cref{baf-auto}, there is an automorphism $\Phi:\tA\to \tA$ with $d_\Omega^\tA(\Phi(\overline a),\overline b)\leq t$. As this is true for all $t>0$, we get 
$$\inf_{\overline c\in Aut_\A(\overline a)} d_\Omega^\tA(\overline b,\overline c)=0,$$ 
so $\overline b\in\overline{Aut_\A(\overline a)}$. 
\end{proof}

 \begin{theorem}\label{def-orbits}
Let $\Omega$ be a universal weak-modulus, $\tA$ be a separable metric $\tau$-structure, and $\overline a\in \tA^{<\omega}$. Then the closure of the automorphism orbit of $\overline a$ in $\tA$ is definable by an  $\Omega$-formula. Meaning that there is an $\Omega$-formula $\psi_{\bar a}(\bar x)$ such that $\psi_{\bar a}^\tA(\bar x) = d_\Omega^\tA(\bar x,\overline{Aut_\tA(\bar a)})$.
\end{theorem}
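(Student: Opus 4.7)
The plan is to realize $F(\bar x) := d_\Omega^\tA(\bar x, \overline{Aut_\tA(\bar a)})$ as an explicit countable supremum of $\Omega$-formulas built from witnesses supplied by the back-and-forth machinery of Section 4. Writing $B := \overline{Aut_\tA(\bar a)}$, the starting identity I would establish is
\begin{equation*}
F(\bar x) = \sup_{\phi} \bigl|\phi^\tA(\bar a) - \phi^\tA(\bar x)\bigr|,
\end{equation*}
where $\phi$ ranges over all $\Omega$-formulas in $n$ variables without parameters. The upper bound $\sup \leq F$ is immediate: by \Cref{closures-same-vals}, each $\Omega$-formula $\phi$ agrees on $\bar a$ and on every $\bar c \in B$, and $\phi$ is $\Omega$-Lipschitz, so $|\phi^\tA(\bar a) - \phi^\tA(\bar x)| = |\phi^\tA(\bar c) - \phi^\tA(\bar x)| \leq d_\Omega^\tA(\bar c, \bar x)$; infimizing over $\bar c \in B$ yields $F(\bar x)$. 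For the reverse inequality, if $t$ strictly exceeds the right-hand side, then $(\bar a, \bar x)$ lies in the back-and-forth set $I(\Omega, t)$ of \Cref{elem-baf}, and applying \Cref{baf-auto} with $\mathcal{A} = \mathcal{B}$ a countable tail-dense sequence containing both $\bar a$ and $\bar x$ produces an automorphism $\Phi$ of $\tA$ with $d_\Omega^\tA(\Phi(\bar a), \bar x) \leq t$, so $F(\bar x) \leq t$.

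The next task is to express this uncountable supremum as an actual formula of $L^\mathbb{R}_{\omega_1,\omega}(\tau)$, which only admits countable suprema. My plan is to cut it down using separability of $\tA^n$: fix a countable dense sequence $\{\bar c_k\}_{k \in \omega} \subset \tA^n$, and for each $k$ with $F(\bar c_k) > 0$ use the displayed identity to select an $\Omega$-formula $\phi_k$ with
\begin{equation*}
\bigl|\phi_k^\tA(\bar a) - \phi_k^\tA(\bar c_k)\bigr| > F(\bar c_k) - 2^{-k}.
\end{equation*}
I would then define $\psi_{\bar a}(\bar x)$ as the countable supremum of all formulas of the form $\phi_k(\bar x) \dot- q\bone$ or $q\bone \dot- \phi_k(\bar x)$, where $(k, q)$ ranges over pairs with $k \in \omega$ and $q \in \mathbb{Q}$ lying, respectively, above or below the real number $\phi_k^\tA(\bar a)$; a standard approximation argument with rationals tending to $\phi_k^\tA(\bar a)$ shows that this countable supremum recovers $|\phi_k(\bar x) - \phi_k^\tA(\bar a)\bone|$ even though the shift is not in general rational. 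After clipping each summand by a fixed rational upper bound on the diameter of $d_\Omega^\tA$, all terms share a common compact image and modulus $\Omega$, yielding a bona fide $\Omega$-formula of $L^\mathbb{R}_{\omega_1,\omega}(\tau)$.

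To verify $\psi_{\bar a}^\tA(\bar x) = F(\bar x)$, the upper bound follows exactly as in the first paragraph since each summand is individually dominated by $F$. For the lower bound, the case $\bar x \in B$ is immediate from \Cref{closures-same-vals} since each summand then vanishes; for $\bar x \notin B$ with $F(\bar x) > 0$ and any $\epsilon \in (0, F(\bar x))$, I would choose $k$ with $d_\Omega^\tA(\bar x, \bar c_k) < \epsilon$ and $2^{-k} < \epsilon$, which forces $F(\bar c_k) > 0$ so $\phi_k$ is defined, and the $\Omega$-Lipschitz properties of $\phi_k$ and of $F$ combine to yield $|\phi_k^\tA(\bar a) - \phi_k^\tA(\bar x)| > F(\bar x) - 3\epsilon$; letting $\epsilon \to 0$ completes the argument.

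The principal obstacle is the syntactic constraint in \Cref{the-language}(7): the countable supremum defining $\psi_{\bar a}$ must share a common modulus and compact image across all summands. The witnesses $\phi_k$ may be infinitary of arbitrary quantifier complexity (constructed, as in the proof of \Cref{elem-baf}, via countable suprema over the tail-dense sequence), so the argument leans essentially on $\Omega$ being a universal weak modulus for $\tau$ --- this ensures the whole countable family respects $\Omega$ --- and on the diameter of $d_\Omega^\tA$ being bounded, so truncation by a rational bound does not alter the values being computed in $\tA$.
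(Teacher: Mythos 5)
Your proposal is correct and takes essentially the same approach as the paper: establish the identity $d_\Omega^\tA(\bar x, \overline{Aut_\tA(\bar a)}) = \sup_\phi |\phi^\tA(\bar a) - \phi^\tA(\bar x)|$ via \Cref{closures-same-vals} in one direction and the back-and-forth machinery (\Cref{elem-baf}, \Cref{baf-auto}) in the other, then cut the uncountable supremum over $\Omega$-formulas down to a countable one using separability. The paper selects one witness formula per non-orbit element $\bar b \in \mathcal{A}^{<\omega}$ and asserts without elaboration that it may be normalized so $\phi_{\bar a,\bar b}^\tA(\bar a) = 0$; since $\phi_{\bar a,\bar b}^\tA(\bar a)$ may be irrational and the language only admits rational scalar shifts, this step actually requires exactly the kind of countable-supremum-over-rationals patch you supply. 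So your extra paragraph on recovering $|\phi_k(\bar x) - \phi_k^\tA(\bar a)\bone|$ from rational approximants, and the truncation step to keep the summands in a common compact range so that the countable supremum is a legal $\Omega$-formula per \Cref{the-language}(7), are not detours but fill a small gap the paper's normalization glosses over.
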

  
 \begin{proof}
 By \Cref{closures-same-vals}, for each $\overline b\in \A^{|\overline a|}$ with $\overline b\not\in\overline{Aut_\A(\overline a)}$, there is an $\Omega$-formula without parameters $\phi_{\overline a,\overline b}(\overline x)$ where $\overline a$ and $\overline b$ differ. In fact, observe that for any $\Omega$-formula $\phi$,
 $$|\phi^\A(\overline a) - \phi^\A(\overline b)|\leq d_\Omega^\tA(\overline a,\overline b)<\infty.$$
So for any $\overline b\not\in\overline{Aut_\A(\overline a)}$, the set 
$$\{|\phi^\A(\overline a) - \phi^\A(\overline b)|\mid \phi\ \text{is an $\Omega$-formula without parameters}\}$$
is bounded and has a supremum $v_{\overline b}>0$. Moreover, the supremum is attained since we have the $\sup_n$ quantifier, so let $\phi_{\overline a,\overline b}(\overline x)$ be a formula with the largest discrepancy. Without loss of generality, further assume that  $\phi^\A_{\overline a,\overline b}(\overline a)= 0$, and $\phi^\A_{\overline a,\overline b}(\overline b)= v_{\overline b}$. Since $\A$ is countable, we can write
$$\psi_{\overline a}(\overline x) = \sup_{\overline b\in \A^{|\overline a|},\ \overline b\not\in\overline{Aut_\A(\overline a)}} \phi_{\overline a,\overline b}(\overline x).$$
We claim that $\psi_{\overline a}(\overline x)$ defines $Aut_\A(\overline a)$. It is clear by \Cref{closures-same-vals}, that the zeroset of $\psi_{\overline a}(\overline x)$ is $Aut_\A(\overline a)$, so it remains to show that $\psi_{\overline a}(\overline x)$ defined the distance to the closure of the automorphism orbit.\par
Suppose $\overline c\in \A^{|\overline a|}$, and $\phi_{\overline a}(\overline c)<t$ for some $t>0$. By construction, 
$$|\phi^\A(\overline a) - \phi^\A(\overline c)|<t,$$
so $(\overline a,\overline c)\in I(\Omega, t)$ as defined in \Cref{elem-baf}. Thus, by \Cref{baf-auto}, there is an automorphism $\Phi$ of $\A$ such that $d^\A_n((\Phi(\overline a),\overline c)\leq t$. Hence, setting $\delta=\epsilon$ in \Cref{define} is enough. 
 \end{proof}

\begin{corollary}\label{def-sets}
 Let $\tA$ be a separable metric $\tau$-structure, and $n\geq 1$ a positive integer. A nonempty set $X\subset \tA^n$ is definable if, and only if, it is metrically closed and automorphism invariant. In fact, if $\Omega$ is a universal modulus for $\tau$, then $X$ is definable by an $\Omega$-formula.
\end{corollary}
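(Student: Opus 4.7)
The plan is to handle the two directions separately, with all the work concentrated in the ``if'' direction. The ``only if'' direction is a quick check: if $X$ is $d_\Omega$-definable, then $d_\Omega(\bar x, X)$ is a parameter-free definable predicate, hence a uniform limit of parameter-free formulas, and therefore preserved in value by every $\Phi\in\operatorname{Aut}(\tA)$. This makes $X$ (its zero set) automorphism-invariant, and metric closedness is immediate from continuity of $d_\Omega(\cdot, X)$.

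For the ``if'' direction, the guiding observation is that any closed, automorphism-invariant $X$ satisfies $X = \bigcup_{\bar a\in X}\overline{\operatorname{Aut}_\tA(\bar a)}$, which suggests the candidate formula $\psi(\bar x) = \inf_{\bar a\in X}\psi_{\bar a}(\bar x)$ where $\psi_{\bar a}$ is supplied by \Cref{def-orbits}. The apparent obstacle is that this infimum is taken over a potentially uncountable set. I resolve this by separability: as a subspace of $\tA^n$, the set $X$ is itself separable, so I fix a countable dense subset $\{\bar a_i\}_{i<\omega}\subseteq X$ together with a countable tail-dense sequence $\mathcal{A}\subset\tA$ whose entries include all components of every $\bar a_i$. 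Applying \Cref{def-orbits} with this common $\mathcal{A}$ to each $\bar a_i$ yields $\Omega$-formulas $\psi_{\bar a_i}(\bar x)$ defining $d_\Omega(\bar x,\overline{\operatorname{Aut}_\tA(\bar a_i)})$, and I set $\psi(\bar x) = \inf_{i<\omega}\psi_{\bar a_i}(\bar x)$, which is an $\Omega$-formula as a countable infimum of $\Omega$-formulas with uniformly bounded image.

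The remaining verification is that $\psi^\tA(\bar x) = d_\Omega(\bar x, X)$. One inequality is immediate, since each $\overline{\operatorname{Aut}_\tA(\bar a_i)}\subseteq X$ (by closedness plus automorphism invariance) gives $\psi_{\bar a_i}(\bar x)\geq d_\Omega(\bar x,X)$. For the reverse, since $\psi_{\bar a_i}$ respects $\Omega$ and vanishes at $\bar a_i$, I get $\psi_{\bar a_i}(\bar x)\leq d_\Omega(\bar x,\bar a_i)$; density of $\{\bar a_i\}$ in $X$, combined with the fact that $d_\Omega$ is a genuine metric (by faithfulness of $\Omega|_n$, \Cref{Upower}.(v)) and continuous in each argument, then gives $\inf_i d_\Omega(\bar x,\bar a_i) = d_\Omega(\bar x, X)$. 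The only real obstacle is the countability reduction, which separability handles cleanly; beyond that, the proof is just bookkeeping on top of \Cref{def-orbits}.
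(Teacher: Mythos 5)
Your proof is correct and follows essentially the same route as the paper: reduce to a countable $d_\Omega$-dense subset of $X$ by separability, take the countable infimum of the orbit-defining $\Omega$-formulas supplied by \Cref{def-orbits}, and verify that this yields the distance predicate. The only small variation is that you establish the exact equality $\psi^\tA(\bar x) = d_\Omega^\tA(\bar x, X)$ directly via the two-sided estimate, whereas the paper checks the $\epsilon$-$\delta$ criterion of \Cref{define}; both are valid.
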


\begin{proof}
The backwards direction is the meaningful one. Suppose we are given $X\subset \tA^n$, which is metrically closed and automorphism invariant. Note that $(A^n, d_{\Omega|_n}^\tA)$ is a separable, complete, bounded metric space (continuity of $\Omega|_n$ is used here). Thus, the same is true of $(X, d_{\Omega|_n}^\tA)$ as it is metrically closed. Fix a countable dense subset $E\subset X$. y \Cref{def-orbits}, $\mathcal{O}_{\bar e}=\overline{Aut_\tA(\bar e)}$ is definably by an $\Omega$-formula $\phi_{\bar e}(\bar x)$ for every $\bar e\in E$. Since $X$ is automorphims invariant and metrically closed, $\bigcup_{\bar e\in E} \mathcal{O}_{\bar e}\subset X$.\\\par

We claim that the $\Omega$-formula $\psi(\bar x) = \inf_{\bar e\in E} \phi_{\bar e}(\bar x)$ defines $X$. Given $\bar y\in X$ and $\epsilon>0$, we can find some $\bar e\in E$ so that $d_\Omega^\tA(\bar y,\bar e)<\epsilon$. Hence $\psi^\tA(\bar y)=0$. Next, suppose that for some $\bar y\in \tA^n$ with $\psi^\tA(\bar y)<\epsilon$ for some $\epsilon>0$. Thus, there is some $\bar e\in E$ with $\phi_{\bar e}^\tA(\bar y)<\epsilon$. This implies that there is some $\bar e'\in \mathcal{O}_{\bar e}\subset X$ with $d_\Omega^\tA(\bar e',\bar y)<\epsilon$. This completes the proof by \Cref{define}.
\end{proof}
 
\begin{definition}
A Scott Predicate for a separable metric $\tau$-structure $\tA$ is a definable predicate $S$, without free variables and with $\min I_S=0$, such that $S^{\overline{\mathcal{B}}}=0$ if, and only if, $\tA\cong \overline{\mathcal{B}}$ for all separable metric $\tau$-structures $\overline{\mathcal{B}}$. We define the notion of Scott Sentence similarly. 
\end{definition}

 \begin{theorem}\label{SP-exists}
 Every separable metric $\tau$-structure has an $L^\mathbb{R}_{\omega_1,\omega}(\tau)$ Scott Predicate. In fact, if $\Omega$ is a universal modulus for $\tau$, then every metric $\tau$-structure has a Scott Predicate made out of $\Omega$-formulas in a precise sense.
 \end{theorem}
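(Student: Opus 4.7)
The plan is to build a Scott predicate from the orbit-defining formulas of \Cref{def-orbits} together with forth- and back-style extension conditions, and then recover the isomorphism via \Cref{baf-auto}. Fix a universal weak modulus $\Omega$ for $\tau$, pick a countable dense $E_1 \subseteq \tA$, and set $E_n := E_1^n$ so that $E := \bigcup_{n \geq 0} E_n$ is closed under concatenation with elements of $E_1$. For each $\bar a \in E$, let $\psi_{\bar a}(\bar x)$ be the $\Omega$-formula from \Cref{def-orbits}, so that $\psi_{\bar a}^{\tA}(\bar x) = d_\Omega^{\tA}(\bar x, \overline{\mathrm{Aut}_\tA(\bar a)})$. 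By \Cref{finitary-density}, for each arity $n$ fix a countable sup-norm-dense collection $\Phi_n$ of quantifier-free $\Omega$-formulas in $n$ variables, treating the $d_\Omega$-formulas of \Cref{Upower}\textit{(iv)} as quantifier-free per the paper's convention. Then take $S$ to be the countable bounded supremum of the following $\Omega$-definable predicates indexed by $n \geq 0$, $\bar a \in E_n$, $\phi \in \Phi_n$, and $c \in E_1$:
\begin{align*}
A_{\bar a,\phi} &:= \sup_{\bar x}\Bigl(\,\bigl|\phi(\bar x) - \phi^{\tA}(\bar a)\bone\bigr|\ \dot-\ \psi_{\bar a}(\bar x)\Bigr),\\
B_{\bar a,c} &:= \sup_{\bar x}\,\inf_y\,\bigl(\psi_{\bar a c}(\bar x, y)\ \dot-\ \psi_{\bar a}(\bar x)\bigr),\\
C_{\bar a} &:= \sup_{\bar x,\,y}\,\inf_{c \in E_1}\,\bigl(\psi_{\bar a c}(\bar x, y)\ \dot-\ \psi_{\bar a}(\bar x)\bigr),
\end{align*}
where $\phi^{\tA}(\bar a)\bone$ denotes the constant definable predicate with value $\phi^{\tA}(\bar a)$, realized as a uniform limit of rational multiples of $\bone$. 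Intuitively, $A$ pins quantifier-free formula values near approximate realizations of $\psi_{\bar a}$ to the $\tA$-value at $\bar a$; $B$ enforces forth-style extensions; $C$ enforces back-style extensions from the $\tB$-side to be explained by some $c \in E_1$. The result is a bounded $\Omega$-definable predicate with $\min I_S = 0$.

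\smallskip

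\noindent \textbf{Verifying the two directions.} For $S^{\tA} = 0$: $A_{\bar a,\phi}^\tA = 0$ because $\phi$ respects $\Omega$ and is constant on $\overline{\mathrm{Aut}_\tA(\bar a)}$ by \Cref{closures-same-vals}, giving $|\phi^\tA(\bar x) - \phi^\tA(\bar a)| \leq \psi_{\bar a}^{\tA}(\bar x)$; $B_{\bar a,c}^\tA = 0$ by taking $y := \Phi(c)$ for an automorphism $\Phi$ with $d_\Omega(\bar x, \Phi(\bar a))$ close to $\psi_{\bar a}^{\tA}(\bar x)$, noting that $d_\Omega^\tA(\bar x y, \Phi(\bar a) y) = d_\Omega^\tA(\bar x, \Phi(\bar a))$ when the final coordinates agree; $C_{\bar a}^\tA = 0$ is analogous, using density of $E_1$ to approximate $\Phi^{-1}(y)$. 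Conversely, assume $S^{\tB} = 0$. Let $\mathcal{A}$ be a tail-dense enumeration of $E_1$ and fix a countable dense $(d_n) \subseteq \tB$. I would interleave the construction of a tail-dense $\mathcal{B}$ in $\tB$ with a back-and-forth set $I$, maintaining the invariant $\psi_{\bar a^k}^{\tB}(\bar b^k) < \epsilon_k$ with $\epsilon_k \searrow 0$: at forth stages, $B^{\tB} = 0$ produces a $y \in \tB$ with small $\psi_{\bar a^{k+1}}^{\tB}(\bar b^k y)$, and tail-density of the developing $\mathcal{B}$ lets one absorb the discrepancy into an actual $d \in \mathcal{B}$; at back stages, given the next $d_n$, $C^{\tB} = 0$ produces $c \in E_1$ with small $\psi_{\bar a^k c}^{\tB}(\bar b^k d_n)$, which is added to $\mathcal{A}$ at a later occurrence in its tail-dense enumeration. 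The resulting $I$ satisfies condition (1) of \Cref{baf-definition}: by $A^{\tB} = 0$, $|\phi^\tB(\bar b) - \phi^\tA(\bar a)| \leq \psi_{\bar a}^{\tB}(\bar b)$ for every $\phi \in \Phi$, and sup-norm density of $\Phi$ extends the bound to all quantifier-free $\Omega$-formulas and to $d_\Omega$. \Cref{baf-auto} then delivers an isomorphism $\tA \cong \tB$.

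\smallskip

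\noindent \textbf{Main obstacle.} The principal difficulty will be the $\epsilon$-bookkeeping of the interleaved back-and-forth: one must simultaneously ensure that $\mathcal{B}$ ends up tail-dense in $\tB$ and that the accumulated approximation errors do not grow so large as to prevent the bound $t$ of \Cref{baf-definition} from being taken arbitrarily small. Alongside this, I must verify that $S$ is genuinely an $\Omega$-definable predicate, namely that the countably many pieces $A_{\bar a,\phi}$, $B_{\bar a,c}$, $C_{\bar a}$ share a common weak-modulus regime and a uniform compact range. Universality of $\Omega$ together with \Cref{good-approx} should allow me to normalize each $A_{\bar a,\phi}$ (whose real coefficient $\phi^{\tA}(\bar a)$ is bounded) into weakly-$(\Omega,I)$ form of controlled complexity, so that the supremum remains within the language.
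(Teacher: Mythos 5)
Your proposal is correct and follows essentially the same route as the paper: both encode (i) agreement of quantifier-free $\Omega$-formula values, (ii) a forth condition, and (iii) a back condition, each measured against the orbit-distance formulas $\psi_{\bar a}$ of \Cref{def-orbits}, into a countable supremum, verify it vanishes on $\tA$, and recover the isomorphism from a bounded back-and-forth set via \Cref{baf-auto}. The only differences are cosmetic — you split the paper's single $\max(\chi^1,\chi^2,\chi^3)\ \dot-\ \psi_{\bar a}$ into separate suprema and handle the irrational constants $\phi^{\tA}(\bar a)$ by a limit of rational multiples of $\bone$ rather than by rational cuts — and the bookkeeping issues you flag (truncating the range, perturbing witnesses into a tail-dense sequence) are handled the same way in the paper.
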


\begin{proof}
Our proof is the continuous version of the argument given in Chapter II of \cite{montalban2021}: the idea is to write a predicate $S$ with the following property: if $\overline{\mathcal{B}}$ is any separable metric $\tau$-structure, then $S^{\overline{\mathcal{B}}}=0$ guarantees that $I(\Omega,t)$, as defined in \Cref{elem-baf}, is a nonempty bounded back-and-forth set for any (equivalently all) $t>0$, between countable tail-dense sequences of the respective structure. Such a predicate must have compact range, so we choose $[0,1]$ as our range. \par

Fix a separable metric $\tau$-structure $\tA$, a universal modulus $\Omega$, and let $\A$ be a countable tail-dense sequence of $\tA$. For any $\overline a\in \A^{<\omega}$, let $P_{\overline a}(\overline x)$ be as in \Cref{def-orbits}   (i.e. a definable predicate that outputs the distance to the closure of the automorphism orbit). We will also fix $\Omega$-formulas $\{\psi_{\bar a,n}\}_{n\in\omega}$ such that $P_{\bar a}$ is the uniform limit of this sequence and $\lVert P_{\bar a}-\psi_{\bar a,n}\rVert\leq 2^{-n}$.\par

We also set $\psi_{\langle\rangle}$ to be the identically zero function, and let $\Theta=\Theta(\Omega)$ be the (countable) set of quantifier-free $\Omega$-formulas plus all the formulas  from \Cref{Upower}.iv (here we use that $\tau$ is countable and there are only countably many connectives). Then define the sentences:

$$S_n = \min\left( \bone,\  \sup_{\overline a\in \A^{<\omega}}\ 
	\sup_{x_0,\dotsc, x_{|\overline a|}} 
		\Bigg[
			\max
				\Bigg(
					\chi^1_{\bar a}(\bx),\chi^2_{\bar a,n}(\bx y), \chi^3_{\bar a,n}(\bx,y)
				\Bigg)\ 
			\dot-\ \psi_{\overline a, n}(\overline x)
		\Bigg]
\right)$$

where
\begin{align*}
\chi^1_{\bar a}(\bx)&= \sup_{\phi\in\Theta}\ \max\left( \sup_{r\in\mathbb{Q}, r<\phi^\A(\overline a)}r\bone \dot-\ \phi(\overline x), \sup_{r\in\mathbb{Q}, r>\phi^\A(\overline a)}\phi(\overline x)\ \dot-\ r\bone\right) \\
\chi^2_{\bar a,n}(\bx y) &= \sup_{c\in\A}\inf_y \psi_{\overline a c, n}(\overline x y)\\
\chi^3_{\bar a,n}(\bx y) &= \sup_y \inf_{c\in\A}  \psi_{\overline a c,n }(\overline x y)
\end{align*}

Some observations: $\bone$ is as defined in \Cref{the-language}. The formulas mentioned might take arbitrarily large values, so taking the minimum with one is needed; although note that the formula is always nonegative by construction. The quantifier $\sup_{r<\phi^\A(\overline a),r\in\mathbb{Q}}$ is needed as the language only allows multiplication by rationals, but it does not affect the quantifier complexity of the whole sentence. The quantifiers $\sup_{b\in \A}$ and $\inf_{b\in\A}$ are countable suprema and infima; this is why we fix a countable tail-dense sequences. Using the prenex-normal form rules from \Cref{prenex-rules}, we can move all the quantifiers to the front. Finally, note that the whole expression inside the supremums is an $\Omega$-formula. \par

To get the Scott Predicate, we put $S=\lim_{n\to\infty} S_n$. This limit exists for any separable metric $\tau$-structure $\tB$ since $\psi_{\bar a,n}\to P_{\bar a}$ uniformly for every $\bar a$. Therefore, $S$ can also be written exactly as the $S_n$, but replacing $\psi_{\bar a,n}$ with $P_{\bar a}$ everywhere for every  $\bar a$\par

It remains to show that $S^\mathcal{B}=0$ implies $I(\Omega,t)$ is a bounded back-and-forth set for any $t>0$. In fact, the second term in the $\min$ has value $0$ exactly when  $I(\Omega,t)$ is a bounded back-and-forth set, but we have to take the $\min$ to make it a valid sentence in the language. If $(\overline a,\overline b)\in I$, then $P_{\bar a}(\bar b)<t$, and since $S^\mathcal{B}=0$, this implies that the maximum of the three $\chi_{\bar a}$ expressions is $\leq P_{\bar a}(\bar b)<t$. Thus, all three of the $\chi_{\bar a}$ expressions are $<t$, but now $\chi^1_{\bar a}(\bar b)<t$ guarantees (1) of \Cref{baf-definition}, $\chi^2_{\bar a}(\bar b)<t$ guarantees (2), and $\chi^3_{\bar a}(\bar b)<t$ guarantees (3). It is nonempty as the case $\overline a = \langle\rangle$ implies $(\langle\rangle, \langle\rangle)\in I$.
\end{proof}

 \begin{corollary}
 Every separable metric $\tau$-structure has an $L^\mathbb{R}_{\omega_1,\omega}(\tau)$ Scott Sentence. In fact, if $\Omega$ is a universal modulus for $\tau$, then every metric $\tau$-structure has a Scott Sentence made out of $\Omega$-formulas in a precise sense.
 \end{corollary}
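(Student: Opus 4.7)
The plan is to compress the Scott Predicate constructed in \Cref{SP-exists} into a single sentence by taking a limit superior. Recall that the proof of \Cref{SP-exists} produces, for our fixed separable metric $\tau$-structure $\tA$, a sequence of $\Omega$-sentences $\{S_n\}_{n<\omega}$, each with range contained in $[0,1]$, such that the Scott Predicate $S$ satisfies $S^{\tB} = \lim_{n} S_n^{\tB}$ for every separable metric $\tau$-structure $\tB$.

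I would then define
$$\phi \;:=\; \inf_m\, \sup_{n\geq m} S_n,$$
that is, the $\limsup$ of the $S_n$'s. First, $\phi$ is a legitimate $L^\mathbb{R}_{\omega_1,\omega}(\tau)$-sentence: since the $S_n$ are all sentences (arity zero), they trivially share the modulus $\Omega|_0$ and the common compact range $[0,1]$, so both the countable supremum and the countable infimum in the definition of $\phi$ meet the requirements of clause (7) of \Cref{the-language}. Since each $S_n$ is itself an $\Omega$-sentence, $\phi$ is an $\Omega$-sentence, which gives the ``precise sense'' asserted in the corollary.

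For any separable metric $\tau$-structure $\tB$ I would then compute
$$\phi^{\tB} \;=\; \inf_m\sup_{n\geq m} S_n^{\tB} \;=\; \limsup_n S_n^{\tB} \;=\; \lim_n S_n^{\tB} \;=\; S^{\tB},$$
where the third equality uses the pointwise convergence $S_n^{\tB}\to S^{\tB}$ established in \Cref{SP-exists}. Hence $\phi^{\tB}=0$ if and only if $S^{\tB}=0$ if and only if $\tA\cong\tB$, and $\phi$ is the desired Scott Sentence for $\tA$. The corollary is essentially a repackaging of the predicate into a single formula; the only point demanding real care is the admissibility of the infinitary $\sup$ and $\inf$ connectives, which is immediate because the $S_n$ are scalar-valued and lie in a common bounded interval.
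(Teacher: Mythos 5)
Your proposal is correct and is exactly the first route the paper itself takes: the paper's proof observes that $S=\limsup_n S_n=\liminf_n S_n$ is a $\tau$-sentence, which is precisely your $\inf_m\sup_{n\geq m}S_n$ construction, justified the same way (common range $[0,1]$, trivial modulus for sentences, and the pointwise convergence $S_n^{\tB}\to S^{\tB}$ from \Cref{SP-exists}). No gaps; you have simply written out the details the paper leaves implicit.
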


\begin{proof}
Either note that $S=\limsup S_n = \liminf S_n$ is a $\tau$-sentence or begin with formulas that define the automorphism orbits and define $S$  directly.
\end{proof}

\section{A Robust Scott Rank For Separable Structures}
 \begin{definition}
Let $\Omega$ be a universal modulus for $\tau$, and $\Gamma$ be a subclass of $L^\mathbb{R}_{\omega_1,\omega}(\tau)$. A separable metric $\tau$-structure $\tA$ is densely $(\Omega,\Gamma)$-atomic if there is a countable tail-dense sequence $\A$ of $\tA$ such that the metric closure of the automorphism orbit of every tuple $\overline a\in \A^{<\omega}$ is $(\Omega, \inf^\alpha)$-definable in $\tA$ without parameters.   
\end{definition}

\begin{lemma}
Let $\mathcal{A}$ be a countable tail-dense sequence for a separable metric $\tau$-structure $\tA$, and $\Omega$ a universal modulus for $\tau$. Then there is an ordinal $\alpha<\omega_1$ such that the metric closure of the automorphism orbit of every tuple $\bar a\in \A^{<\omega}$ is $(\Omega, \inf^\alpha)$-definable in $\tA$ without parameters.   
\end{lemma}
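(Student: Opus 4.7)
The plan is to combine the existence of a single $\Omega$-formula defining each orbit closure (Theorem~\ref{def-orbits}) with the countability of $\mathcal{A}^{<\omega}$, controlling the resulting complexities through Lemma~\ref{good-approx}. For every $\bar a\in\mathcal{A}^{<\omega}$, Theorem~\ref{def-orbits} supplies an $\Omega$-formula $\psi_{\bar a}(\bar x)\in L_{\omega_1,\omega}^{\mathbb{R}}(\tau)$ with $\psi_{\bar a}^{\tA}(\bar x)=d_\Omega^{\tA}(\bar x,\overline{Aut_{\tA}(\bar a)})$. By the recursive definition of the logic, every such formula has a countable quantifier complexity; using Lemma~\ref{prenex-rules} I would put $\psi_{\bar a}$ in prenex form, read off a countable ordinal $\beta_{\bar a}<\omega_1$, and (absorbing an outer $\sup^{\beta_{\bar a}}$ as a trivial singleton $\inf$, which costs at most one level) obtain an ordinal $\alpha_{\bar a}\leq \beta_{\bar a}+1<\omega_1$ for which $\psi_{\bar a}$ is an $\inf^{\alpha_{\bar a}}$-formula.

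Next, I would use that $\mathcal{A}^{<\omega}=\bigcup_{n<\omega}\mathcal{A}^n$ is a countable union of countable sets, hence countable, so
\[
\alpha:=\sup_{\bar a\in\mathcal{A}^{<\omega}}\alpha_{\bar a}
\]
is a countable supremum of countable ordinals and therefore $\alpha<\omega_1$. Since the class of $\inf^\gamma$-formulas is non-decreasing in $\gamma$ (immediate from the definition, rewriting an $\inf^{\beta}$-formula as a degenerate $\inf^\gamma$-formula with the same $\sup^{\beta'}$-components for $\beta'<\beta\leq\gamma$), every $\psi_{\bar a}$ is in particular an $\inf^\alpha$-formula.

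Finally, to move from a single $\inf^\alpha$-formula to the weak-$(\Omega,I)$-approximation required by the definition of $(\Omega,\inf^\alpha)$-definability, I would invoke Lemma~\ref{good-approx}. The predicate $P_{\bar a}(\bar x):=d_\Omega^{\tA}(\bar x,\overline{Aut_{\tA}(\bar a)})$ is weakly-$(\Omega,I)$ for $I$ its range (it is $1$-Lipschitz in $d_\Omega$) and is trivially the uniform limit of the constant sequence $\{\psi_{\bar a}\}$ of $\inf^\alpha$-formulas (replacing $\alpha$ by $\max(\alpha,1)$ so that $\alpha>0$ as required by the lemma). Lemma~\ref{good-approx} then yields that $P_{\bar a}$ is the uniform limit of weak-$(\Omega,I)$-$\inf^\alpha$-$\tau$-formulas, which is exactly the statement that $\overline{Aut_{\tA}(\bar a)}$ is $(\Omega,\inf^\alpha)$-definable without parameters. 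The argument is essentially a bookkeeping exercise and I do not anticipate a real obstacle; the only subtle points are the $\sup$-to-$\inf$ conversion (costing at most one quantifier level) and ensuring the range parameter is handled uniformly when invoking Lemma~\ref{good-approx}.
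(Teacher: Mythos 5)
Your argument is correct and follows the same route as the paper: invoke the definability of each orbit closure at some countable complexity, use the countability of $\mathcal{A}^{<\omega}$, and conclude via the uncountable cofinality of $\omega_1$. The paper's proof simply says each defining predicate ``may be assumed'' weakly-$(\Omega,\inf^{\alpha_{\bar a}})$ for a countable $\alpha_{\bar a}$, whereas you spell out that step via prenex form, the monotonicity of $\inf^\gamma$ in $\gamma$, and \Cref{good-approx} --- a harmless and indeed welcome elaboration.
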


\begin{proof}
Since $\A$ is countable, so is $\A^{<\omega}$. The metric closure of the automorphism orbit of every tuple $\bar a\in \A^{<\omega}$ is definable by some definable predicate, which we may assume is weakly-$(\Omega, \inf^{\alpha_{\bar a}})$ for some countable ordinal $\alpha_{\bar a}$. Then, let $\alpha=\sup\{\alpha_{\bar a}\mid \bar a\in \A^{<\omega}\}$. Note then that $\alpha$ is a countable ordinal since $\omega_1$ has uncountable cofinality. 
\end{proof}

 \begin{definition}
Given a universal modulus $\Omega$, the (parameterless) $\Omega$-Scott rank of a separable metric $\tau$-structure $\tA$, written $SR^\Omega(\tA)$, is the least ordinal $\alpha$ such that $\tA$ is densely $(\Omega,\Gamma)$-atomic.
\end{definition}

\begin{lemma}\label{SP-complexity}
Let $\mathcal{A}$ be a countable tail-dense sequence for a separable metric $\tau$-structure $\tA$, $\Omega$ a universal modulus for $\tau$, and $\alpha$ an ordinal. If the closure of every automorphism orbit of every tuple coming from $\mathcal{A}$ is $(\Omega, \inf^\alpha)$-definable without parameters, then $\tA$ has a $\sup^{\alpha+1}$ Scott predicate. 
 \end{lemma}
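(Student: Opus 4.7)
The plan is to take the Scott predicate $S$ constructed in \Cref{SP-exists} and carefully track the quantifier complexity of every building block under the stronger hypothesis. By assumption, each $P_{\bar a}$ with $\bar a \in \A^{<\omega}$ is weakly-$(\Omega, \inf^\alpha)$ definable, so we may choose the approximating sequence $\{\psi_{\bar a,n}\}_{n<\omega}$ in the proof of \Cref{SP-exists} to consist of weak-$(\Omega,\inf^\alpha)$-formulas with $\|P_{\bar a} - \psi_{\bar a,n}\| \le 2^{-n}$. Recall also from that proof that $S$ may be written exactly as the $S_n$ but with $\psi_{\bar a,n}$ replaced by $P_{\bar a}$ everywhere, so it suffices to verify that this ``limit'' expression is weakly-$(\Omega,\sup^{\alpha+1})$.

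The core step is a component-by-component analysis of $\chi^1$, $\chi^2$, $\chi^3$. The formula $\chi^1_{\bar a}$ is quantifier-free (a countable supremum of connectives applied to $\phi \in \Theta$ and rational constants), hence trivially dominated by $\sup^{\alpha+1}$. For $\chi^2_{\bar a}(\bx) = \sup_{c \in \A}\inf_y P_{\bar ac}(\bx,y)$, the inner $\inf_y$ applied to an $\inf^\alpha$ predicate stays in $\inf^\alpha$ (by \Cref{ok-switch} and the definition of the hierarchy), so the countable supremum $\sup_{c\in\A}$ of $\inf^\alpha$ predicates is by definition $\sup^{\alpha+1}$. The analysis of $\chi^3_{\bar a}$ is analogous: the inner countable $\inf_{c\in\A}$ of $\inf^\alpha$ predicates is $\inf^\alpha$, and the outer $\sup_y$ produces a $\sup^{\alpha+1}$ expression.

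Next, I would check that the outer combination preserves $\sup^{\alpha+1}$. Unfolding $x \dot- y$ as $\max(x-y, 0)$ and $-P_{\bar a}$ as a $\sup^\alpha$ predicate (scalar multiplication by $-1$ swaps $\inf^\alpha$ and $\sup^\alpha$ via the prenex rules in \Cref{prenex-rules}), the expression
\[
\max\!\big(\chi^1_{\bar a}, \chi^2_{\bar a}, \chi^3_{\bar a}\big) \;\dot-\; P_{\bar a}(\bx)
\]
is a connective combination of $\sup^{\alpha+1}$ and $\sup^\alpha$ predicates. Pulling the existential quantifiers to the front using the prenex normal form lemma and the fact that $\max$ commutes with countable suprema (since $\max(\sup_n a_n, \sup_m b_m) = \sup_{n,m}\max(a_n, b_m)$), the whole expression is $\sup^{\alpha+1}$. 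Finally, the outermost $\sup_{\bar a \in \A^{<\omega}}$ and $\sup_{\bx}$ are further countable and continuous suprema, so they keep the complexity at $\sup^{\alpha+1}$, and $\min(\bone,\cdot)$ is a connective.

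Thus $S$ is a uniform limit of $\sup^{\alpha+1}$ weak-$\Omega$ formulas (one for each $n$, using $\psi_{\bar a, n}$ with a suitable weighted error bound as in the proof of \Cref{SP-exists}), so $S$ is a weakly-$(\Omega,\sup^{\alpha+1})$ definable predicate. The fact that $S$ is still a Scott predicate is purely semantic and carries over verbatim from \Cref{SP-exists}. The main obstacle is not conceptual but bookkeeping: confirming that $\dot-$, $\max$, and addition do not raise complexity above $\sup^{\alpha+1}$ once we pass to prenex form, and that the uniform limits can be absorbed into the $(\Omega,\sup^{\alpha+1})$-definable class without introducing extra quantifier alternations.
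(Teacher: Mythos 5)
Your proposal follows the same route as the paper: track quantifier complexity through the construction in \Cref{SP-exists}, using \Cref{good-approx} (with $\alpha > 0$) to pick weak-$(\Omega,\inf^\alpha)$ approximating formulas $\psi_{\bar a,n}$, and observe that the $\chi$ expressions plus the outer suprema land in $\sup^{\alpha+1}$. One small terminological slip: $\chi^1_{\bar a}$ is not quantifier-free in this paper's sense, since countable suprema are counted as quantifiers; it is an $(\Omega,\sup^1)$-formula (a countable sup of quantifier-free combinations), which is still dominated by $\sup^{\alpha+1}$ exactly as you conclude, so the argument goes through unchanged.
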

 
 \begin{proof}
This is obtained by counting the quantifier complexity of the sentences in \Cref{SP-exists}. Note that $\chi_{\bar a}^1$ is $(\Omega,\sup^1)$ since the formulas are quantifier free. Also, since $\alpha>0$, we can choose the formulas $\psi_{\bar a, n}$ so that they satisfy $\Omega$ by \Cref{good-approx}.
\end{proof}

The following lemma gives us a sufficient condition for a separable structure to be $(\Omega, \inf^\alpha)$-atomic:

 \begin{lemma}\label{types-to-orbits}
Let $\mathcal{A}$ be a countable tail-dense sequence for a separable metric $\tau$-structure $\tA$, $\Omega$ a universal modulus for $\tau$, and $\alpha>0$ a countable ordinal. If every maximal $(\Omega,\overline{\sup^\alpha})$-type realized in $\mathcal{A}$ is supported in $\tA$ by a weakly-($\Omega,\inf^{\alpha})$-definable predicate without parameters, then the closure of every automorphism orbit in $\mathcal{A}$  is $(\Omega, \inf^\alpha)$-definable without parameters.
 \end{lemma}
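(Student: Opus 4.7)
The plan is to use the weakly-$(\Omega,\inf^\alpha)$-support $P_{\bar a}$ of the $(\Omega,\overline{\sup^\alpha})$-type of each $\bar a\in\mathcal{A}^{<\omega}$ to define (in the sense of \Cref{define}) the metric closure $\overline{Aut_\tA(\bar a)}$ of the automorphism orbit, and then appeal to \Cref{good-approx} to upgrade to a distance predicate of the required complexity.

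First I would normalize so that $P_{\bar a}^\tA(\bar a)=0$, either by invoking the construction in the proof of \Cref{char-supported} (which naturally yields a support vanishing at a given realization of the type) or by replacing $P_{\bar a}$ with its 1-Lipschitz $d_\Omega$-envelope $\inf_{\bar y}(P_{\bar a}(\bar y)+d_\Omega(\bar x,\bar y))$, whose complexity stays weakly-$(\Omega,\inf^\alpha)$ by \Cref{ok-switch} and \Cref{good-approx}. Since $P_{\bar a}$ is parameterless, $P_{\bar a}^\tA(\Phi(\bar a))=0$ for every automorphism $\Phi$, and continuity of $P_{\bar a}$ extends this to $\overline{Aut_\tA(\bar a)}\subseteq\{P_{\bar a}^\tA=0\}$.

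The core of the proof is a quantitative reverse containment: for every $\epsilon>0$ there is $\delta>0$ such that $P_{\bar a}^\tA(\bar x)<\delta$ implies $d_\Omega^\tA(\bar x,\overline{Aut_\tA(\bar a)})<\epsilon$. I would establish this via a bounded back-and-forth between $\mathcal{A}$ and itself, using the whole family of supports $\{P_{\bar c}\}_{\bar c\in\mathcal{A}^{<\omega}}$, with matching relation those pairs $(\bar c,\bar d)\in\mathcal{A}^{<\omega}\times\mathcal{A}^{<\omega}$ for which $P_{\bar c}^\tA(\bar d)$ is small. Condition (1) of \Cref{baf-definition} then comes directly from the support condition, since quantifier-free $\Omega$-formulas and their negatives are all $\sup^\alpha$ and so a small value of $P_{\bar c}(\bar d)$ forces their values at $\bar c$ and $\bar d$ to agree within a tolerance controlled by $P_{\bar c}(\bar d)$. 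For the extension step, the predicate $R(\bar x)=\inf_y P_{\bar c e}(\bar x,y)$ is weakly-$(\Omega,\inf^\alpha)$-definable (\Cref{ok-switch}, \Cref{good-approx}) with $R(\bar c)=0$; propagating approximate vanishing of $R$ to $\bar d$ via the support, and then using tail-density of $\mathcal{A}$ to approximate the resulting witness by some $f\in\mathcal{A}$, keeps $(\bar c e,\bar d f)$ matched with a slightly looser tolerance. Feeding the resulting back-and-forth set, anchored at an $\mathcal{A}$-approximation of $(\bar a,\bar b)$ for any $\bar b$ with $P_{\bar a}^\tA(\bar b)<\delta$, into \Cref{baf-auto} yields automorphisms of $\tA$ sending $\bar a$ within $\epsilon(\delta)$ of $\bar b$ in $d_\Omega$, with $\epsilon(\delta)\to 0$ as $\delta\to 0$. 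This is exactly the condition in \Cref{define}(2) for the set $\overline{Aut_\tA(\bar a)}$, and a final application of \Cref{good-approx} converts the resulting defining predicate into a weakly-$(\Omega,\inf^\alpha)$ one.

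The hard part is justifying the extension step: the $\inf^\alpha$-predicate $R(\bar x)=\inf_y P_{\bar c e}(\bar x,y)$ vanishes at $\bar c$, but approximate $(\Omega,\overline{\sup^\alpha})$-type agreement between $\bar c$ and $\bar d$ (as witnessed by the support) is of dual complexity, so the transfer of approximate vanishing to $\bar d$ must be extracted quantitatively from the support condition itself, with careful bookkeeping of how tolerances compose across the back-and-forth so that the final matching set retains a genuine bound as required by \Cref{baf-auto}.
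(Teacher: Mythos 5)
Your plan follows the paper's route closely: fix supports $P_{\bar c}$, argue they vanish on orbit closures, form the matching relation from smallness of $P_{\bar c}^\tA(\bar d)$, verify the back-and-forth axioms using $\inf_y P_{\bar c e}$ and its $\sup^\alpha$ negative, then run \Cref{baf-auto} and \Cref{define}. The differences are in the places where you hedge, and the hedging is where the gap lies.

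The phrase ``keeps $(\bar c e,\bar d f)$ matched with a slightly looser tolerance'' and your final paragraph on ``careful bookkeeping of how tolerances compose'' suggest you expect the tolerance to degrade as the back-and-forth runs. It does not, and if it did the argument would not close, since \Cref{baf-auto} needs a single fixed bound $t$. The paper's observation is that $K(\Omega,t)=\{(\bar c,\bar d):P_{\bar c}^\tA(\bar d)<t\}$ is already a bounded back-and-forth set with bound exactly $t$, for every $t>0$. The mechanism: if $P_{\bar c}^\tA(\bar d)<t$, choose $\epsilon$ with $P_{\bar c}^\tA(\bar d)<\epsilon<t$. The predicate $Q(\bar x)=\sup_y-P_{\bar c e}(\bar x,y)$ is weakly $(\Omega,\sup^\alpha)$ and $Q^\tA(\bar c)=0$, so $Q$ lies in the type $\Phi_{\bar c}$. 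The support condition at level $\epsilon$ gives $\theta(Q,0,\epsilon)^\tA(\bar d)=0$; since $Q$ is itself $\Omega$-Lipschitz, this forces $|Q^\tA(\bar d)|\le\epsilon$, hence $\inf_y P_{\bar c e}^\tA(\bar d,y)\le\epsilon<t$. The strict gap between $\epsilon$ and $t$ is precisely what absorbs the error in rounding the abstract witness to some $f\in\mathcal{A}$ of sufficiently large index: you land back in $K(\Omega,t)$ at the same $t$. The paper presents this as a contradiction argument, but the content is this estimate, and there is nothing to compose. Condition (1) is verified the same way, using $\Omega$-Lipschitzness of quantifier-free $\Omega$-formulas to turn $\theta(\phi,r,\epsilon)^\tA(\bar d)=0$ into $|\phi^\tA(\bar d)-\phi^\tA(\bar c)|\le\epsilon$.

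One more small point: your two suggestions for normalizing $P_{\bar a}^\tA(\bar a)=0$ do not quite land. The $d_\Omega$-Lipschitz envelope is the identity here, because a weakly-$\Omega$ predicate is already $1$-Lipschitz for $d_\Omega$; and invoking \Cref{char-supported} is circular, since that lemma gives sufficiency of $(\bigstar)$, not a way to replace an arbitrary support by the constructed one. The paper instead deduces $P_{\bar a}^\tA(\bar a)=0$ directly from the support condition: since $\inf_{\bar x}P_{\bar a}^\tA(\bar x)=0$, pick $\bar x$ with $P_{\bar a}^\tA(\bar x)<\epsilon$, apply the support at $\bar x$ to the pair $(-P_{\bar a},\ -P_{\bar a}^\tA(\bar a))\in\Phi_{\bar a}$, and use $\Omega$-Lipschitzness of $P_{\bar a}$ to bound $P_{\bar a}^\tA(\bar a)$ by a constant multiple of $\epsilon$.
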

 
\begin{proof}
For every $\overline a\in \A^{<\omega}$, let $P_{\overline a}(\overline x)$ be the weakly-$(\Omega, \inf^{\alpha})$ predicate that supports 
$$\Phi_{\bar a} = (\Omega, \overline{\sup\phantom{}^\alpha})\text{-}\operatorname{tp}_\tA(\bar a)$$
and for $\langle\rangle$ we choose the identically zero function. We claim that these formulas actually define the respective (closures of) automorphism orbits.\par

First, we show that $P^\tA_{\bar a}(\bar a)=0$. Note that $-P_{\bar a}$ is weakly $(\Omega,\sup^\alpha)$ and $\inf_{\bar x} P^\tA_{\bar a}(\bar x)=0$ by definition. Therefore, given $\epsilon>0$, fix some $\bar x\in A^{|\bar a|}$ with $P^\tA_{\bar a}(\bar x)<\epsilon$. There is some tuple $\bar y\in \tA^{|\bar a|}$ with $d_\Omega^\tA(\bar y,\bar x)<\epsilon$ and $|P_{\bar a}^\tA(\bar a)-P_{\bar a}^\tA(\bar y)|<\epsilon$. Therefore, since $P_{\bar a}$ satisfies $\Omega$, we get $ P^\tA_{\bar a}(\bar a)<3\epsilon$.

Also, for any $\bar b\in A^{|\bar a|}$, we have that $\Phi_{\bar a}$ decides the value of $-P_{\bar b}(\bx)$. Thus, if $P^\A_{\overline a}(\overline b)<t$, then we also have $P^\A_{\overline b}(\overline a)<t$. In particular, if $P^\A_{\overline a}(\overline b)=0$, then $b$ realizes $\Phi_{\bar a}$ and $P^\A_{\overline b}(\overline a)=0$ as well.\par

We also note that \Cref{elem-baf} implies that $P^\A_{\overline a}(\overline b)=0$ for any  $\overline b\in \overline{Aut_\A(\overline a})$.\par

It remains to show that $P_{\overline a}(\overline x)$ satisfies the $\epsilon-\delta$ condition in \Cref{define}, which we do via a back-and-forth argument: consider the set
$$K(\Omega, t) = \{ (\overline a,\overline b)\mid P^\tA_{\overline a}(\overline b)<t\}\subset \left( \A^{<\omega}\right)^2.$$
We claim that $K(\Omega, t)$ is a nonmepty, bounded back-and-forth set for all $t>0$. The set is nonempty as $(\bar a,\bar a)\in K(\Omega,t)$. Next, let $(\overline a,\overline b)\in K(\Omega, t)$ and we check the three conditions in the definition of back-and-forth set:
\begin{enumerate}
\item Fix a quantifier-free $\tau$-formula $\phi(\bx)$ respecting $\Omega$. Suppose $\phi^\tA(\bar a)=r$, so $(\phi,r)\in \Phi_{\bar a}$. By our assumption on $(\bar a,\bar b)$, there is some $0<t'<t$ such that $\theta(\phi, r,t')(\bar b)^\tA=0$. Therefore, since  $\phi(\bx)$ respects $\Omega$, it follows that $|\phi^\tA(\bar b)-\phi^\tA(\bar a)|\leq t'<t$.

\item Assume for contradiction that there is some $c\in\A$ such that for every $d\in \A$ we have $P_{\bar a c}^\tA(\bar b d)\geq t$
then consider the weakly-$(\Omega, \sup^\alpha)$ predicate
$$Q(\bar x) =  \sup_y -P_{\bar a c}(\bx y) $$
note that $P_{\bar a c}(\bar a c)^\tA=0$, so $Q\in \Phi$. Also, by construction, $Q^\tA(\bar b)\leq -t$. Therefore, $P_{\bar b}(\bar a)^\tA=P_{\bar a}(\bar b)^\tA\geq t$, which is a contradiction to $(\bar a,\bar b)\in K(\Omega,t)$. 
\item Similar to (2).
\end{enumerate}
To finish the proof, by \Cref{baf-auto}, $P^\A_{\overline a}(\overline b)<t$ implies that there is some $\overline c\in   \overline{Aut_\A(\overline a})$ with $d^{\Omega,\A}(\overline c,\overline b)\leq t$, and we are done.
\end{proof}

 \begin{theorem}[Robustness]
Let be a separable metric $\tau$-structure $\tA$, $\Omega$ a universal modulus for $\tau$, and $\alpha>0$ a countable ordinal. Fix $\mathcal{A}$, a countable tail-dense sequence for $\tA$. Then, the following are equivalent:
\begin{enumerate}
\item the closure of every automorphism orbit of  every $\bar a\in \mathcal{A}^{<\omega}$ is $(\Omega, \inf^{\alpha})$-definable without parameters;
\item every $(\Omega,\overline{\sup^{\alpha}})$  type realized in $\mathcal{A}$ is supported in $\tA$ by a weakly-($\Omega,\inf^{\alpha})$-definable predicate without parameters
\end{enumerate}
Moreover, they imply that
\begin{enumerate}
\item[(3)] $\tA$ has an $(\Omega,\sup^{\alpha+1})$ Scott predicate. That is, a predicate of the form  $$Q=\sup_n\sup_{\bx_n}P_n(\bx_n),$$ such that each $P_n$ is a weakly-$(\Omega,\inf^{\alpha})$ definable predicate
\end{enumerate}
 \end{theorem}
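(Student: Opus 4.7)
The proof breaks into three pieces, two of which are direct appeals to earlier results. The implication $(2) \Rightarrow (1)$ is exactly the content of \Cref{types-to-orbits} applied to the fixed tail-dense sequence $\mathcal{A}$, so nothing more is needed there. The implication $(1) \Rightarrow (3)$ follows immediately from \Cref{SP-complexity}: inspecting the explicit Scott predicate built in \Cref{SP-exists}, with each $P_{\bar a}$ now guaranteed to be weakly-$(\Omega, \inf^\alpha)$, yields exactly the shape $\sup_n \sup_{\bar x_n} P_n(\bar x_n)$ described in (3), and each $P_n$ is assembled from the $P_{\bar a}$'s, quantifier-free $\Omega$-formulas, and one additional layer of $\inf$, producing $(\Omega, \inf^\alpha)$-definable pieces.

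For the remaining direction $(1) \Rightarrow (2)$, fix $\bar a \in \mathcal{A}^{<\omega}$ and let $P_{\bar a}(\bar x)$ be the weakly-$(\Omega, \inf^\alpha)$ definable predicate with $P_{\bar a}^\tA(\bar x) = d_\Omega^\tA(\bar x, \overline{Aut_\tA(\bar a)})$, which exists by hypothesis. The plan is to show that $P_{\bar a}$ supports the complete type $\Phi_{\bar a} = (\Omega, \overline{\sup^\alpha})\text{-}\optp_\tA(\bar a)$. The first support condition is transparent: $P_{\bar a}^\tA(\bar a) = 0$ forces $\inf_{\bar x} P_{\bar a}^\tA(\bar x) = 0$. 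For the second, suppose $P_{\bar a}^\tA(\bar x) < \epsilon$; then some $\bar y \in \overline{Aut_\tA(\bar a)}$ lies within $d_\Omega$-distance $\epsilon$ of $\bar x$. Apply \Cref{closures-same-vals} to conclude that $\bar y$ agrees with $\bar a$ on every $\Omega$-formula, and hence (passing to uniform limits) on every weakly-$\Omega$ definable predicate. In particular $Q^\tA(\bar y) = 0$ for every $Q \in \Phi_{\bar a}$, so $\bar y$ itself witnesses $\theta(Q, 0, \epsilon)^\tA(\bar x) = 0$, completing the support condition.

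The main conceptual obstacle, and the one point deserving care, is the decision to phrase the support condition at the level of definable predicates rather than raw pairs $(\psi, r)$. A formula-type may contain non-tight conditions with $\psi^\tA(\bar a) < r$, for which $|\psi(\bar y) - r|$ need not be small when $\bar y$ sits in the orbit closure, so $\theta(\psi, r, \epsilon)^\tA(\bar x)$ may be strictly positive even with $\bar x$ arbitrarily close to the orbit. The remark in Section~\ref{section-types} noting $\overline{\interior{\Psi(\bar x)}} = \Psi(\bar x)$ for complete $(\Omega, \overline{\sup^\alpha})$-types is precisely what permits us to pass to the tight, predicate-level formulation, where $Q^\tA(\bar a) = 0$ is preserved exactly on the entire orbit closure. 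Once this observation is in place, the $\epsilon$-translation survives under the universal weak modulus and the argument above goes through without friction; the remaining work is purely bookkeeping of quantifier complexity, already handled by \Cref{good-approx} and \Cref{ok-switch}.
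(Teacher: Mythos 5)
Your decomposition matches the paper's exactly: $(2)\Rightarrow(1)$ by \Cref{types-to-orbits}, $(1)\Rightarrow(3)$ by \Cref{SP-complexity}, and $(1)\Rightarrow(2)$ by checking directly that the orbit-distance predicate $P_{\bar a}$ supports the complete type $(\Omega,\overline{\sup^\alpha})\text{-}\optp_\tA(\bar a)$; the paper labels this last step ``a routine check'' and you usefully fill in the details, correctly isolating the need to pass from raw pairs $(\psi,r)$ to the tight predicate-level formulation $\overline{\interior{\Psi}}=\Psi$.

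One small thing you should make explicit: the paper's statement (2) quantifies over \emph{every} $(\Omega,\overline{\sup^\alpha})$-type realized in $\mathcal{A}$, which by the paper's naming convention includes partial types, not only the complete types $\Phi_{\bar a}$. Your argument handles only the complete case. This is not a real gap --- if $\Psi$ is realized by some $\bar a\in\mathcal{A}^{<\omega}$ then $\Psi\subseteq\Phi_{\bar a}$, so the support condition for $\Phi_{\bar a}$ (which is a universal statement over $(\psi,r_\psi)\in\Phi_{\bar a}$) immediately yields the support condition for the smaller $\Psi$, with the same predicate $P_{\bar a}$. The paper instead builds a new predicate by taking the infimum of the orbit-distance predicates over all realizations in $\mathcal{A}^{<\omega}$; your single-realization shortcut is simpler and equally valid, but you should state it rather than leave partial types unaddressed.
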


\begin{proof}
(1)$\to$(2): Fix $\bar a\in \A^{<\omega}$. By assumption, there is a weakly-$(\Omega, \inf^\alpha)$ predicate $P_{\bar a}$ satisfying that $P_{\bar a}^\tA(\bar x)=d_\Omega^\tA(\bar x,\overline{Aut_\tA(\bar a)})$. It is a routine check of the definition to show that $P_{\bar a}$ supports $(\Omega,\overline{\sup^{\alpha}})-\operatorname{tp}_\tA(\bar a)$. For partial types, note that there are at most countably many realizations of the type in $\A^{<\omega}$. We can construct a weakly-$(\Omega, \inf^\alpha)$ predicate that supports the type by taking the infimum over all the definition of automorphism orbits for tuples that realize the type.\par

(2)$\to$(1) is \Cref{types-to-orbits}.\par

(1)$\to$(3): is \Cref{SP-complexity}.
\end{proof}

We also obtain the following partial converse:

\begin{theorem}
If $\alpha>0$ is a countable limit ordinal and $\tA$ has an $(\Omega,\sup^{\alpha})$ Scott predicate, then $SR^\Omega(\tA)\leq\alpha$. In fact, for every $\bar a\in\tA^{<\omega}$, there is some $\beta<\alpha$ such that $\overline{Aut_{\tA}(\bar a)}$ is definable (without parameters) by a weakly-$(\Omega,\inf^\beta)$ definable predicate.
\end{theorem}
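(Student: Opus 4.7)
\emph{Setup and application of type omitting.} Since $\alpha$ is a limit ordinal, the $(\Omega,\sup^\alpha)$ Scott predicate $S$ of $\tA$ is (by definition of $\sup^\alpha$) of the form $\sup_n \sup_{\bar x_n} P_n(\bar x_n)$ with each $P_n$ weakly-$(\Omega,\inf^{<\alpha})$-definable, which is precisely the shape required by \Cref{type-omitting}; moreover $S^{\tA}=0$. Fix any tuple $\bar a\in\tA^{<\omega}$ and form the complete type $\Psi_{\bar a}=(\Omega,\overline{\sup^{<\alpha}})\text{-}\operatorname{tp}_{\tA}(\bar a)$. If $\Psi_{\bar a}$ were not $(\Omega,\overline{\inf^{<\alpha}})$-supported in $\tA$, then \Cref{type-omitting} would produce a separable metric $\tau$-structure $\tB$ with $S^{\tB}=0$ that omits $\Psi_{\bar a}$; but the Scott-predicate property of $S$ forces $\tB\cong\tA$, and any such isomorphism sends $\bar a$ to a tuple in $\tB$ realizing every predicate in $\Psi_{\bar a}$ (isomorphisms preserve definable predicates), a contradiction. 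Hence there is a weakly-$(\Omega,\inf^{\beta_{\bar a}})$-definable predicate $P_{\bar a}$ supporting $\Psi_{\bar a}$ with $\beta_{\bar a}<\alpha$.

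\emph{Back-and-forth: orbit closures are $\inf^{\beta_{\bar a}}$-definable.} Fix a countable tail-dense sequence $\mathcal{A}$ of $\tA$ containing $\bar a$ and, for each $\bar b\in\mathcal{A}^{<\omega}$, let $P_{\bar b}$ be a support as above, of complexity $\inf^{\beta_{\bar b}}$ with $\beta_{\bar b}<\alpha$. We then repeat the back-and-forth argument of \Cref{types-to-orbits} essentially verbatim, verifying that $K(\Omega,t)=\{(\bar b,\bar c)\in(\mathcal{A}^{<\omega})^2\mid P_{\bar b}^{\tA}(\bar c)<t\}$ is a nonempty bounded back-and-forth set for every $t>0$. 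The only step requiring genuine checking is that the witness predicate $Q(\bar x)=\sup_y\bigl(-P_{\bar b c}(\bar x y)\bigr)$, used when extending $(\bar b,\bar c)$ by $c$ on the left, lies in $\Psi_{\bar b}$: this holds because $-P_{\bar b c}$ is $\sup^{\beta_{\bar b c}}$ with $\beta_{\bar b c}<\alpha$, and applying $\sup_y$ preserves that complexity by the prenex rules (\Cref{prenex-rules}, \Cref{ok-switch}), so $Q$ is weakly-$\Omega$-$\sup^{\beta_{\bar b c}}\subseteq\sup^{<\alpha}$, while $Q^{\tA}(\bar b)=0$ since $P_{\bar b c}^{\tA}(\bar b c)=0$ (argued exactly as in \Cref{types-to-orbits}). \Cref{baf-auto} then upgrades $(\bar a,\bar c)\in K(\Omega,t)$ to the existence of an automorphism of $\tA$ sending $\bar a$ to within $d_\Omega$-distance $t$ of $\bar c$; combined with the easy fact that $P_{\bar a}$ vanishes on $\overline{Aut_{\tA}(\bar a)}$, this shows that $P_{\bar a}$ satisfies the $\epsilon$-$\delta$ criterion of \Cref{define} and hence defines the distance to $\overline{Aut_{\tA}(\bar a)}$.

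\emph{Conclusion and main obstacle.} This establishes the \emph{In fact} clause: for each $\bar a\in\tA^{<\omega}$ (for $\bar a$ not in a prechosen $\mathcal{A}$, simply select a tail-dense sequence containing $\bar a$), $\overline{Aut_{\tA}(\bar a)}$ is weakly-$(\Omega,\inf^{\beta_{\bar a}})$-definable for some $\beta_{\bar a}<\alpha$. Since every weakly-$(\Omega,\inf^\beta)$-predicate with $\beta<\alpha$ is \emph{a fortiori} weakly-$(\Omega,\inf^\alpha)$, $\tA$ is densely $(\Omega,\inf^\alpha)$-atomic, whence $SR^\Omega(\tA)\leq\alpha$. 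The main technical point is the adaptation of the back-and-forth argument of \Cref{types-to-orbits} to the present setting of tuples with varying support complexities $\beta_{\bar b}<\alpha$; the limit-ordinal hypothesis on $\alpha$ is essential here, since it is exactly what guarantees the $\sup^{<\alpha}$ and $\inf^{<\alpha}$ classes are closed under the quantifications and sign changes that arise when building the witness predicates.
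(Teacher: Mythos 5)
Your argument is correct and follows essentially the same strategy as the paper: combine \Cref{type-omitting} with the back-and-forth argument of \Cref{types-to-orbits}. You organize it as a direct proof (first establish that every $(\Omega,\overline{\sup^{<\alpha}})$-type realized in $\tA$ is supported at some level $\beta<\alpha$, then run the back-and-forth with those variable-level supports), whereas the paper runs the contrapositive as a proof by contradiction; the logical content is identical, and you are somewhat more explicit than the paper's terse ``\Cref{types-to-orbits} works in this setting'' about why the witness predicate $\sup_y(-P_{\bar b c})$ still lands in $\sup^{<\alpha}$ when the supports have varying complexities. One small loose end: your closing remark attributes the limit-ordinal hypothesis to closure of $\sup^{<\alpha}$ and $\inf^{<\alpha}$ under quantification and negation, but those closure properties hold for successor $\alpha>1$ as well; the genuine bottleneck is that \Cref{type-omitting} itself is only proved for limit $\alpha$, owing to the complexity jump in the $\theta$-formulas that enter the definition of a supported type.
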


\begin{proof}
The proof is by contradiction. Suppose $\tA$ has an $(\Omega,\sup^{\alpha})$ Scott Predicate $Q$ as described, but there is some $\bar a\in \tA^{<\omega}$ such that $\overline{Aut_{\tA}(\bar a)}$ is not $(\Omega,\inf^{<\alpha})$-definable (i.e. it is not $(\Omega,\inf^\beta)$-definable for all $\beta<\alpha$). Note that \Cref{types-to-orbits} works in this setting, so we must have that the $(\Omega,\overline{\sup^{<\alpha}})-\operatorname{tp}(\bar a)$ is not $(\Omega, \overline{\inf^{<\alpha}})$ supported in $\tA$. Then, by \Cref{type-omitting}, there is  a separable metric $\tau$-structure $\tB$ such that $Q^\tB=0$ and $\tB$ omits $\Psi(\bx)$. Thus, there cannot be an isomorphism between $\tA$ and $\tB$, which contradicts our assumption $Q$.
\end{proof}


\printbibliography
\end{document}